\newtheorem{theorem}{Theorem}
\newtheorem{lemma}[theorem]{Lemma}
\newtheorem{proposition}[theorem]{Proposition}
\newtheorem{claim}[theorem]{Claim}
\theoremstyle{definition}
\newtheorem*{definition*}{Definition}
\newtheorem{definition}[theorem]{Definition}
\newcommand{\theoremname}{testing}
\theoremstyle{remark}
\newtheorem*{remark*}{Remark}
\newtheorem{remark}[theorem]{Remark}
\numberwithin{theorem}{section}
\def\eps{{\varepsilon}}
\def\La{{\Lambda}}
\def\Ga{{\Gamma}}
\newcommand{\bR}{\mathbb R}
\newcommand{\bZ}{\mathbb Z}
\newcommand{\bE}{\mathbb E}
\newcommand{\cE}{\mathcal{E}}
\newcommand{\cB}{\mathcal B}
\newcommand{\cG}{\mathcal G}
\newcommand{\Z}{\mathbb{Z}}
\newcommand{\Cov}{\operatorname{Cov}}
\newcommand{\one}{\boldsymbol{1}}
\newcommand{\footremember}[2]{%
    \footnote{#2}
    \newcounter{#1}
    \setcounter{#1}{\value{footnote}}%
}
\author{%
  Sourav Chatterjee\footremember{A}{Department of Mathematics and Department of Statistics, Stanford University, USA. Email: {\tt souravc@stanford.edu} }%
  \and Oren Yakir\footremember{B}{Department of Mathematics, Massachusetts Institute of Technology, USA. Email: {\tt oren.yakir@gmail.com}}}
\title{Correlation decay for U(1) lattice Higgs theory: the case of small mass}
\date{}
\begin{document}

	\maketitle

	\begin{abstract}
    We study the lattice Yang--Mills--Higgs model with inverse gauge coupling $\beta>0$ and Higgs length $\alpha>0$, in the ``complete breakdown of symmetry" regime. For lattice dimension $d\ge 2$ and (abelian) gauge group U(1), we prove that for any $m>0$, if $\alpha= m\beta$ and $\beta$ is large enough, the model exhibits exponential decay of correlations. This extends the classical result of Osterwalder and Seiler~\cite{Osterwalder-Seiler},
    who required in addition that $m$ be sufficiently large. Our result also verifies a phase diagram from the physics literature, predicted by Fradkin and Shenker~\cite{Fradkin-Shenker}. 
    The proof is based on the Glimm--Jaffe--Spencer cluster expansion around a massive Gaussian field, following the approach of Balaban et al.~\cite{Balaban-Imbrie-Jaffe-Brydges}.
	\end{abstract}


    \pagestyle{plain}
    
    \section{Introduction}
    \label{sec:introduction}
    In this paper we study the validity of the Higgs mechanism for a certain abelian Yang--Mills lattice model. Let $d\ge 2$ and let $\Lambda \subset \bZ^d$ denote a finite box in the lattice $\bZ^d$. We denote by $E(\Lambda)$ and $P(\Lambda)$ the set of (positively oriented) edges and plaquettes in $\Lambda$, respectively. The Yang--Mills--Higgs lattice measure with gauge group U(1), inverse coupling constant $\beta>0$ and Higgs length $\alpha>0$ is the Gibbs measure on $\boldsymbol{\theta} =(\theta_e)_{e\in E(\Lambda)} \subset [-\pi,\pi]^{E(\Lambda)}$ which corresponds to the action
    \begin{equation}
        \label{eq:intro-hamiltonian_with_alpha_beta}
        \boldsymbol{\theta} \mapsto \beta \sum_{p\in P(\Lambda)} \cos\big({\sf d} \theta_p\big) + \alpha \sum_{e\in E(\Lambda)} \cos\big(\theta_e\big) \, .
    \end{equation}
    Here ${\sf d}$ is the lattice exterior derivative. That is, for a plaquette $p\in P(\La)$ which consists of the oriented edges $e_1,e_2,e_3,e_4$ in counter clockwise order (see Figure~\ref{figure:plaquette}), we have 
    \[
    {\sf d} \theta_p = \theta_{e_1} + \theta_{e_2} - \theta_{e_3} - \theta_{e_4} \, .
    \]
    \noindent
    The action~\eqref{eq:intro-hamiltonian_with_alpha_beta} with $\alpha = 0$ is nothing but the pure lattice Yang--Mills action with gauge group U(1). With $\alpha>0$, it corresponds to the Yang--Mills gauge field coupled to a Higgs field in the so-called ``complete breakdown of symmetry" regime. Namely, when the Higgs field takes values in the gauge group (which is U(1), in our case), and the gauge field acts on it via the trivial representation. The action~\eqref{eq:intro-hamiltonian_with_alpha_beta} is then obtained by unitary gauge fixing, which removes the Higgs field from consideration. While we will not motivate this choice of model further, we remark that it is quite a popular choice both in the mathematics and in the physics literature for studying the Higgs mechanism in (abelian or non-abelian) Yang--Mills models; see for example~\cite{Banks-Rabinovici, Chatterjee-HiggsMechanism, Fradkin-Shenker, Osterwalder-Seiler, Seiler-book, Shen-Zhu-Zhu}.
    	\begin{figure}
		 	\begin{center}	\scalebox{0.4}{\includegraphics{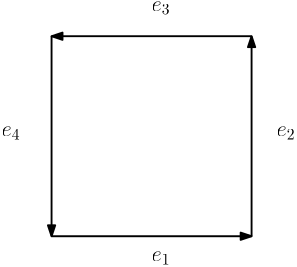}}
		 	\end{center}
            \caption{A plaquette consisting of four oriented edges $e_1,e_2,e_3,e_4$.}
                    \label{figure:plaquette}
		\end{figure}
    \subsection{Main result}
    In what follows, we will re-parametrize the Yang--Mills--Higgs action~\eqref{eq:intro-hamiltonian_with_alpha_beta}. Indeed, in this paper we will study the case $\alpha = m \beta$, where $m>0$ is an arbitrary constant that will be kept fixed throughout.\footnote{In fact, our result also applies when $m$ is allowed to decrease slowly as $\beta\to\infty$, see Remark~\ref{remark:mass_tending_to_Zero} below.}
    We set
    \begin{equation}
        \label{eq:intro_hamiltoniam_with_beta_and_mass}
        \mathcal{H}_{\Lambda}(\boldsymbol{\theta}) = \sum_{p\in P(\Lambda)} \big[1 - \cos\big({\sf d} \theta_p\big)\big] + m \sum_{e\in E(\Lambda)} \big[1- \cos\big(\theta_e\big)\big] \, .  
    \end{equation}
    For finite $\Lambda\subset \Z^d$, the \emph{Yang--Mills--Higgs measure} on $\Lambda$ with free boundary condition is given by
    \begin{equation}
        \label{eq:intro-def_of_YMH_measure}
        {\rm d}\mu_{\Lambda}^{\sf YMH}(\boldsymbol{\theta}) =  \frac{1}{Z_{\Lambda}^{\sf YMH}} \exp\Big(-\beta \, \mathcal{H}_{\Lambda}(\boldsymbol{\theta})\Big) \prod_{e\in E(\Lambda)} \one_{\{|\theta_e| \le \pi\}} \, {\rm d}\theta_e \, ,
    \end{equation}
    where $Z_{\Lambda}^{\sf YMH}$ is the partition function that normalizes $\mu_{\Lambda}^{\sf YMH}$ into a probability measure. We denote by $\bE_{\Lambda}^{\sf YMH}[\cdot]$ the expectation operator with respect to the probability measure~\eqref{eq:intro-def_of_YMH_measure}. Our main result is a mass gap for this particular lattice Yang--Mills--Higgs model, which holds for arbitrary small $m>0$ as long as $\beta$ is large enough (depending on $m$). We denote by $\text{\normalfont dist}$ is the graph distance in $\bZ^d$.
    \begin{theorem}
        \label{thm:exponential_clustering_in_lattice_YMH}
        For any $m>0$ and for all $\beta\ge \beta_0(d,m)$ large enough, there exist constants $C,c>0$ which are uniform in the size of $\Lambda_n = \{-n,\ldots,n\}^d \subset \bZ^d$, so that for all $x,y\in E(\Lambda_n)$ we have
        \[
        \Big|\bE_{\Lambda_n}^{\sf YMH}\big[\theta_x \theta_y\big] - \bE_{\Lambda_n}^{\sf YMH}\big[\theta_x \big]\cdot \bE_{\Lambda_n}^{\sf YMH}\big[ \theta_y\big]\Big| \le C e^{-c  \, \text{\normalfont dist}(x,y)} \, .
        \]
    \end{theorem}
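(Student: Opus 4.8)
For $\beta$ large the measure $\mu_{\Lambda_n}^{\sf YMH}$ concentrates near the constant configuration $\boldsymbol\theta\equiv\boldsymbol 0$, so the plan is to run a low-temperature cluster expansion around the associated Gaussian, in the spirit of Glimm--Jaffe--Spencer and Balaban et al. Substituting $\theta_e=\phi_e/\sqrt\beta$ and using $\beta\big(1-\cos(t/\sqrt\beta)\big)=t^2/2-t^4/(24\beta)+\cdots$, the weight $\exp(-\beta\mathcal H_{\Lambda_n})$ becomes $\exp\!\big(-\tfrac12\langle\phi,({\sf d}^{*}{\sf d}+m)\phi\rangle-V(\phi)\big)$, where the quadratic form is a \emph{massive} one --- the mass $m>0$ appears precisely because $\alpha=m\beta>0$ lifts the gauge degeneracy of ${\sf d}^{*}{\sf d}$ --- and $V(\phi)=\beta\mathcal H_{\Lambda_n}(\phi/\sqrt\beta)-\tfrac12\langle\phi,({\sf d}^{*}{\sf d}+m)\phi\rangle$ is the remaining interaction, whose Taylor series starts at quartic order with coefficients $O(1/\beta)$ per edge and per plaquette, and which stays bounded thanks to the cutoff $|\phi_e|\le\pi\sqrt\beta$ inherited from $\theta_e\in[-\pi,\pi]$. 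Two ``compact'' features are carried along inside the perturbation: this cutoff, and the $2\pi$-periodicity of the cosine, which allows vortex configurations where ${\sf d}\theta_p\in 2\pi\bZ+(\text{small})$ along a codimension-$2$ defect set.

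\textbf{Step 1 (decay of the reference covariance).} Let $C:=\beta^{-1}({\sf d}^{*}{\sf d}+m)^{-1}$ on $1$-forms. In Fourier variables the symbol of ${\sf d}^{*}{\sf d}$ is $M(p)=|\lambda(p)|\,I-a(p)a(p)^{*}$, with $a_j(p)=e^{\I p_j}-1$, $\lambda_j(p)=|a_j(p)|^2$ and $|\lambda(p)|=\sum_j\lambda_j(p)=|a(p)|^2$; a rank-one (Sherman--Morrison) identity gives
\[
\big(M(p)+m\big)^{-1}=\frac{I}{|\lambda(p)|+m}+\frac{a(p)a(p)^{*}}{m\,(|\lambda(p)|+m)} ,
\]
which is \emph{real-analytic} in $p$, with analyticity strip of width $\asymp\sqrt m$ (despite the prefactor $m^{-1}$, the numerator $a(p)a(p)^{*}$ vanishes to the same order as $|\lambda(p)|$ at $p=0$, so no spurious long-range tail survives). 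Hence $\|C(x,y)\|\le C_m\,\beta^{-1}e^{-c\sqrt m\,\dist(x,y)}$, whereas $C(x,x)\asymp(\beta m)^{-1}$; the same bounds hold for the free-boundary covariance on $\Lambda_n$ uniformly in $n$ once the finite-volume corrections are absorbed (e.g.\ through a random-walk representation). Thus the bare two-point function already decays exponentially, at rate $c(m)\asymp\sqrt m$, and the job of the expansion is to show that $V$, the cutoff, and the vortex sectors do not spoil this.

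\textbf{Step 2 (the polymer expansion and its convergence).} Next I would run the Glimm--Jaffe--Spencer cluster expansion, resummed via a Brydges--Kennedy forest formula: introduce decoupling parameters $s_b\in[0,1]$ on the bonds of a coarsened lattice, interpolating $C\rightsquigarrow C(\mathbf s)$ so as to decouple unit blocks; Taylor-expand $e^{-V}$ together with the indicators of the compact cutoff and of the vortex sectors; and resum. This produces a polymer gas on $\Lambda_n$ in which each polymer carries an activity bounded by a product of factors $1/\beta$ (one per vertex of $V$), lattice derivatives of $C(\mathbf s)$, and --- for each large-field block or vortex defect it contains --- a weight that is exponentially small in $\beta$: the cutoff event at one edge has weight $\lesssim e^{-c\beta^2 m}$, while a vortex costs action $\propto\beta m\,\langle n,({\sf d}{\sf d}^{*}+m)^{-1}n\rangle$, that is $\gtrsim c\,\beta m\log(1/m)$ in $d=2$ and $\gtrsim c\,\beta m$ per unit defect length in $d\ge 3$. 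Since $C(x,x)\asymp(\beta m)^{-1}$, the effective smallness of a polymer of size $k$ is of order $(\beta^2 m)^{-ck}$ up to combinatorial and derivative factors, so a Koteck\'y--Preiss-type criterion gives convergence of the polymer gas once $\beta\ge\beta_0(d,m)$ --- exactly the hypothesis of the theorem, and the step where essentially all the work lies.

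\textbf{Step 3 (conclusion, and the main obstacle).} From the convergent polymer representation of $Z_{\Lambda_n}^{\sf YMH}$ and of the two-field generating functional, the connected correlation $\bE_{\Lambda_n}^{\sf YMH}[\theta_x\theta_y]-\bE_{\Lambda_n}^{\sf YMH}[\theta_x]\,\bE_{\Lambda_n}^{\sf YMH}[\theta_y]$ equals $\beta^{-1}C(x,y)$ plus a sum over polymers meeting both edges $x$ and $y$ (the model's $\boldsymbol\theta\mapsto-\boldsymbol\theta$ symmetry already forces $\bE_{\Lambda_n}^{\sf YMH}[\theta_x]=0$, so this connected correlation is just $\bE_{\Lambda_n}^{\sf YMH}[\theta_x\theta_y]$); each such polymer has extent at least $\dist(x,y)$, hence activity at most $\varrho^{\,c\,\dist(x,y)}$ for some $\varrho=\varrho(d,m)<1$ (it concatenates covariances contributing $e^{-c\sqrt m}$ per step), so summing the convergent series yields the bound $C\,e^{-c\,\dist(x,y)}$ with $C=C(d,m)$ and $c=c(d,m)$ independent of $n$ because every estimate is local. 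The main obstacle --- and the only genuine gain over Osterwalder--Seiler, who assumed $m$ large so that $(\beta m)^{-1}$ was itself small and the interplay was invisible --- is to run the expansion of Step 2 with explicit control of the $m$-dependence: one must push the $O(1/\beta)$ smallness of $V$ through a propagator of size $O\big(1/(\beta m)\big)$, through its lattice derivatives, through functional integrals against a reference field of correlation length $1/\sqrt m$, and through the combinatorics, all while showing that the large-field region and the vortex sectors contribute only exponentially small polymer activities. Making these estimates line up quantitatively is what forces $\beta_0$ to depend on $m$.
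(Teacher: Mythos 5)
Your overall architecture is the same as the paper's: a Glimm--Jaffe--Spencer cluster expansion of the Yang--Mills--Higgs measure around the massive Gaussian (Proca) action $\tfrac12\sum_p({\sf d}\theta_p)^2+\tfrac m2\sum_e\theta_e^2$, with the compact/large-field part split off, convergence checked by a Koteck\'y--Preiss-type criterion, and the connected correlation extracted from the convergent polymer gas. Your Fourier/Sherman--Morrison bound for the free covariance is a legitimate substitute for the paper's Neumann-series argument (Lemma~\ref{lemma:proca_field_is_massive}) -- indeed it gives the sharper rate $e^{-c\sqrt m\,\dist(x,y)}$ versus $e^{-cm\,\dist(x,y)}$ -- and the symmetry observation $\bE_{\Lambda_n}^{\sf YMH}[\theta_x]=0$ is correct.

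As a proof, however, there is a genuine gap, and it sits exactly where you park ``essentially all the work''. First, the decisive structural choice is missing: the decoupling parameters must sit on the faces between blocks of side $L\ge L_0(m,d)$, with $L$ taken large relative to the Gaussian correlation length (the paper stresses that even $L\asymp m^{-1}$ is not enough); your plan to ``decouple unit blocks'' places the interpolation across faces at unit distance, where the $\mathbf{s}$-derivatives of the covariance carry no exponential gain, so the interpolation weights are not small and the convergence criterion cannot be verified for small $m$, no matter how large $\beta$ is. Second, what the expansion actually consumes is not the translation-invariant infinite-volume bound of your Step 1 but bounds on $\partial^\Ga$ of the $\mathbf{s}$-interpolated, finite-volume covariance, uniform in $\mathbf{s}$ and in the boundary data imposed by the large-field region (Lemmas~\ref{lemma:proca_field_is_massive} and~\ref{lemma:estimate_on_derivative_of_covariance}, proved by a banded-operator/Neumann-series argument precisely because it survives interpolation and boundary conditions); the aside about absorbing finite-volume corrections via a random-walk representation does not supply these. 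Third, the vortex sector is a red herring in this gauge-fixed model: since $\theta_e\in[-\pi,\pi]$, a plaquette with ${\sf d}\theta_p$ near $2\pi$ forces some $|\theta_e|\gtrsim 1$, which is already a large-field event suppressed by the mass term through the stability bound $1-\cos\theta\ge\theta^2/10$ (Claim~\ref{claim:YMH_action_is_stable}); your quantitative claims there (cutoff weight $e^{-c\beta^2m}$ per edge, vortex action $\gtrsim\beta m\log(1/m)$ in $d=2$) are unsubstantiated and unnecessary, the correct bookkeeping being the threshold $T_\beta=\log^{d+2}(\beta)/\sqrt\beta$ with cost $e^{-c_L\beta T_\beta^2}$ per bad block, which is what beats the entropy and the ratios of partition functions. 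Until the block scale, the interpolated-covariance derivative bounds, and the large-field accounting (including the factorial-growth control of the smooth cutoff's derivatives) are in place, your Step 2 is an assertion rather than an argument.
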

   	\begin{figure}
		 	\begin{center}	\scalebox{0.25}{\includegraphics{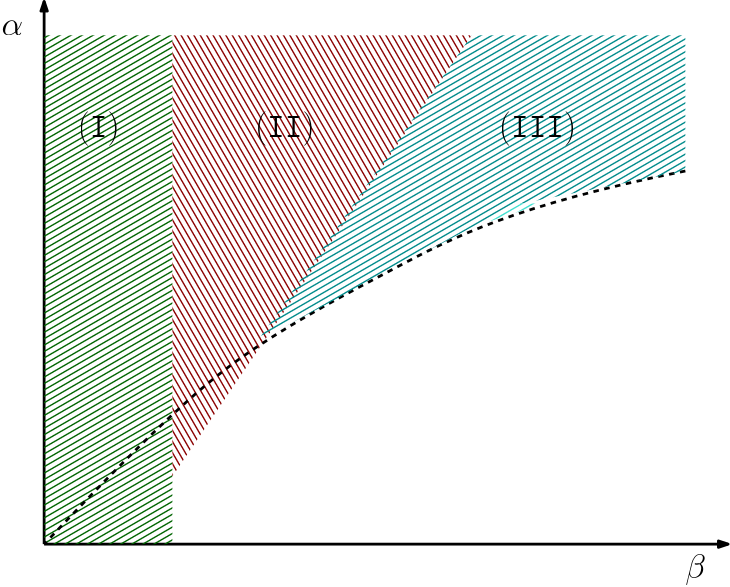}}
		 	\end{center}
            \caption{The Fradkin--Shenker phase diagram. The colored regions indicate phases where the model exhibits a mass gap. In domains {\tt (I)} and {\tt (II)}, the presence of a mass gap was proved in earlier works (see discussion in the text). The existence of a mass gap in the region {\tt (III)} is the new contribution of this paper. 
            The dashed line represents a graph of a certain concave function. }
            \label{figure:phase_diagram}
		\end{figure}

    \subsection{Related works}

    In particle physics, the Higgs mechanism is a way to generate a `mass gap' (i.e., exponential decay of correlations in the theory) via spontaneous symmetry breaking. In our particular model, this is effectively obtained via the term 
    \[
    m \sum_{e\in E(\Lambda)} \big[1- \cos\big(\theta_e\big)\big]
    \]
    in the action~\eqref{eq:intro_hamiltoniam_with_beta_and_mass}. Thus, it is natural to ask for which values of $m>0$ the mechanism ``works" and a mass gap is present.  
    Indeed, the main motivation for our work comes from the influential physics paper by Fradkin and Shenker~\cite{Fradkin-Shenker}, which investigates the phase diagram of the model~\eqref{eq:intro-hamiltonian_with_alpha_beta} in the $(\alpha,\beta)$ plane. In~\cite{Fradkin-Shenker}, the authors predict the presence of a mass gap whenever $\alpha$ is bigger than some concave function of $\beta^{-1}$, provided that both parameters are large; see~\cite[Figure~2]{Fradkin-Shenker}. In particular, this suggests that a mass gap should persist even when $m= \alpha/\beta$ is arbitrarily small. See also the work of Banks and Rabinovici~\cite{Banks-Rabinovici} for a further discussion on this phase diagram and its physical implications. 

    From the mathematical standpoint, the existence of a mass gap for the Yang--Mills--Higgs measure~\eqref{eq:intro-def_of_YMH_measure} has been rigorously established only in certain regimes: 
    \begin{enumerate}
        \item[{\tt (I) }] For small $\beta$ and any $m\ge 0$ (see~\cite{Osterwalder-Seiler, Seiler-book}). 
         \item[{\tt (II)}] For both $\beta$ and $m$ sufficiently large (see~\cite{Seiler-book} and~\cite[Appendix]{Fradkin-Shenker}). 
    \end{enumerate}
    Theorem~\ref{thm:exponential_clustering_in_lattice_YMH} complements these results by proving the existence of a mass gap for all $m>0$, assuming that $\beta$ is sufficiently large depending on $m$, thereby providing a rigorous verification of part of the Fradkin--Shenker prediction. Moreover, our analysis shows that $m$ can in fact tend to zero as $\beta\to\infty$ while still maintaining the mass gap; see Remark~\ref{remark:mass_tending_to_Zero} for a more detailed discussion on this point. 
    
    As we already mentioned, when $m=0$ the measure~\eqref{eq:intro-def_of_YMH_measure} reduces to the pure U(1) Yang--Mills lattice model. For this model, the celebrated Berezinskii–Kosterlitz–Thouless (BKT) transition is known to occur for lattice dimension $d=4$. In the regime of small $\beta$, correlations decay exponentially as was proved rigorously by Osterwalder and Seiler~\cite{Osterwalder-Seiler} (and holds in any dimension $d\ge 2$). We also mention the recent work~\cite{Shen-Zhu-Zhu} of Shen, Zhu and Zhu, which obtained similar findings for a wide class of lattice Yang-Mills-Higgs type models, using PDE techniques. 
    On the other hand, for $\beta$ sufficiently large the correlations decay only polynomially fast in $d=4$, as proved by Fr\"ohlich and Spencer~\cite{Frohlich-Spencer} (see also Guth~\cite{guth80}). We also mention the work by Garban and Sep\'ulveda~\cite{Garban-Sepulveda-IMRN2023} for finer asymptotic properties of the U(1) Yang--Mills model on $\bZ^4$, in the large $\beta$ regime. 

    The Higgs mechanism for abelian gauge fields on a lattice has previously been established for a related (but different) Yang--Mills--Higgs type model, in the work of Balaban, Imbrie, Jaffe, and Brydges~\cite{Balaban-Imbrie-Jaffe-Brydges}. There, the authors study a U(1) gauge field coupled to an $\mathbb{R}^2$-valued Higgs field, whose marginal is sampled from the so-called ``Mexican hat potential''. In~\cite{Balaban-Imbrie-Jaffe-Brydges}, the authors employ a gauge fixing that preserves the Higgs field, in contrast to the model~\eqref{eq:intro-hamiltonian_with_alpha_beta}, where the Higgs field has already been eliminated via gauge fixing, corresponding to the so-called ``complete breakdown of symmetry'' regime. Despite this difference,~\cite{Balaban-Imbrie-Jaffe-Brydges} also establishes a mass gap for their model in a different parameter regime. In fact, our proof techniques are closely related, and our analysis in several parts of this paper is similar to that of~\cite{Balaban-Imbrie-Jaffe-Brydges}. In some sense, the elimination of the Higgs field makes our argument slightly less technical and perhaps more accessible, since~\cite{Balaban-Imbrie-Jaffe-Brydges} must track the joint behavior of the gauge and Higgs fields throughout their work. Both~\cite{Balaban-Imbrie-Jaffe-Brydges} and the present work ultimately rely on a convergent cluster expansion due to Glimm, Jaffe, and Spencer~\cite{Glimm-Jaffe-Spencer}; see Section~\ref{subsection:idea_of_the_proof} for a high-level overview of the approach. We also refer the reader to the survey~\cite{Borgs-Nill} for additional background on phase diagrams of the model studied in~\cite{Balaban-Imbrie-Jaffe-Brydges}.

    There have also been recent developments in the study of the Higgs mechanism in abelian gauge fields. Most closely related to our work is the recent paper~\cite{Chatterjee-HiggsMechanism} by the first named author. In~\cite{Chatterjee-HiggsMechanism}, among other things, a continuum Gaussian scaling limit for the field sampled from~\eqref{eq:intro-def_of_YMH_measure} is obtained, in the joint limit $\beta\to\infty$, $m\to 0$, and lattice spacing $\to 0$ simultaneously at a suitable rate. It is further shown that the limiting field is massive, suggesting that mass can be generated even when $m$ is (effectively) small. In contrast, Theorem~\ref{thm:exponential_clustering_in_lattice_YMH} shows that mass is in fact generated already in the lattice model for all $m>0$, provided $\beta$ is sufficiently large. 

    In another direction, there has also been recent work on the decay of correlations in the Yang-Mills-Higgs lattice models with finite (abelian or non-abelian) gauge group. We refer the interested reader to~\cite{Adhikari-Cao, Frosstrom, Frosstrom-Lenells-Viklund} and the references therein.


    \subsection{Rough idea of the proof}
    \label{subsection:idea_of_the_proof}

    Theorem~\ref{thm:exponential_clustering_in_lattice_YMH} follows from a convergent cluster expansion, which is uniform in the volume of the lattice square $\La_n \subset \bZ^d$. A wrinkle to the story is that unlike in more ``standard" cluster expansions, typically expanded around a ground state or an i.i.d.\ field, here we show that the model~\eqref{eq:intro-def_of_YMH_measure} is, in fact, a small perturbation of a certain massive Gaussian field with non-trivial correlations. We remark that the work of Osterwalder and Seiler~\cite{Osterwalder-Seiler} (see also~\cite[Appendix]{Fradkin-Shenker}) applied a cluster expansion for the same model~\eqref{eq:intro-def_of_YMH_measure} but around an i.i.d.\ field, which is the chief reason why their domain of convergence is smaller, ultimately requiring that $m$ must  be large as well. 
    
    Indeed, for $\beta$ large, we expect typical configurations sampled from~\eqref{eq:intro-def_of_YMH_measure} to be small. On a formal level, expanding the cosine 
    \[
    \cos(\theta) \approx 1- \frac{\theta^2}{2} \, ,
    \]
    yields the approximation   
    \begin{equation}
        \label{eq:intro-gaussian_approximation_of_YMH_action}
        \mathcal{H}_{\Lambda}(\boldsymbol{\theta}) \approx \frac{1}{2} \sum_{p\in P(\Lambda)} \big({\sf d} \theta_p\big)^2 + \frac{m}{2} \sum_{e\in E(\Lambda)} \big(\theta_e\big)^2 \, . 
    \end{equation}
    The right-hand side of~\eqref{eq:intro-gaussian_approximation_of_YMH_action} is an action of a certain Gaussian field, namely, the Proca field.
    In fact, its continuum analogue is exactly the Gaussian scaling limit which appeared in~\cite{Chatterjee-HiggsMechanism} and was mentioned in the previous section on related works. 
    Heuristically, the approximation~\eqref{eq:intro-gaussian_approximation_of_YMH_action} suggests that the Yang--Mills--Higgs measure~\eqref{eq:intro-def_of_YMH_measure} is a small perturbation of the Proca field as $\beta \to \infty$. To carry out this reasoning formally, we employ a technique attributed to Glimm, Jaffe, and Spencer~\cite{Glimm-Jaffe-book, Glimm-Jaffe-Spencer}, which enables one to cluster expand around a given massive Gaussian field. As a first step, we show (Lemma~\ref{lemma:proca_field_is_massive} below) that the Proca field is massive, with correlations between $\theta_x$ and $\theta_y$ decaying like
    \[
    \exp\big( - c_d \, m \, \text{dist}(x,y)\big)
    \]
    uniformly for $\beta \ge 1$, where $c_d > 0$ is a constant depending only on the lattice dimension $d \ge 2$. With this in hand, we may take    
    \begin{equation}
    L \ge L_0(m,d),
    \end{equation}
    to be sufficiently large, and partition the lattice domain $\Lambda$ into $L$-blocks. That is, lattice boxes of side length $L$, indexed by points of $L \bZ^d$. Naively, one may think that taking $L= C_d \, m^{-1}$ for some sufficiently large $C_d$ is sufficient, but due to various technical aspects of the proof we in fact need to take $L$ to be much larger. We then consider a new Gaussian field which has the same law as the Proca field inside each individual $L$-block, but is otherwise completely uncorrelated. The Glimm--Jaffe--Spencer cluster expansion allows us to expand the Proca field around this block-independent Gaussian field, as $L$ grows large.

    Next, we show that the error terms resulting from the approximation~\eqref{eq:intro-gaussian_approximation_of_YMH_action} can also be approximately factored into independent contributions from the different $L$-blocks, provided that $L$ is large and $\beta \to \infty$. A technical caveat is that the Gaussian approximation~\eqref{eq:intro-gaussian_approximation_of_YMH_action} only holds when the Yang--Mills--Higgs field variables remain small. While this is typically the case, there may be regions in $\Lambda$ where the field takes atypically large values, making the approximation invalid. We treat such regions separately, first showing that they do not percolate, and only then expanding around the Gaussian field in the remaining part of the domain.

    The cluster expansion is one of the oldest and most widely used techniques in statistical mechanics, dating back at least to the work of Mayer~\cite{Mayer} in 1937. A standard introduction to the basics can be found in the book by Friedli and Velenik~\cite[Chapter~5]{Friedli-Velenik_book}, while a more advanced treatment, including a general framework for cluster expansions around Gaussian fields, can be found in Brydges's excellent lecture notes~\cite{Brydges-course}. We also note that perturbative arguments around (massive or massless) Gaussian fields are a recurring theme in the mathematical literature on gauge theory. Notable examples include~\cite{Balaban-Imbrie-Jaffe-Brydges, Chatterjee-HiggsMechanism, Frohlich-Spencer,Garban-Sepulveda-IMRN2023, Glimm-Jaffe-Spencer, Osterwalder-Seiler, Seiler-book}, to name a few.

    \begin{remark}
    \label{remark:mass_tending_to_Zero}
    Our proof of Theorem~\ref{thm:exponential_clustering_in_lattice_YMH} is robust enough to yield the following (slightly stronger) statement: there exists $\beta_0(d)$ so that for all $\beta\ge \beta_0(d)$ and for all $m \ge (\log\beta)^{-1}$, the correlations in~\eqref{eq:intro-def_of_YMH_measure} decay exponentially with the distance. In other words, the proof remains valid even if we take the mass parameter $m$ to decay slowly as a function of $\beta$, for instance as $m = (\log \beta)^{-1}$. We will not provide full details of this extension, but we note that it requires choosing the coarse scale $L$ to grow like $C_d \log \beta$ for some sufficiently large constant $C_d$. Aside from that, the structure of the proof remains essentially unchanged. 

    We find this observation interesting enough to discuss particularly because it implies that the domain in the $(\alpha,\beta)$ plane where a mass gap is known to hold lies above a concave curve. More precisely, it shows that the threshold function
    \[
    \psi(\beta) = \inf \big\{ \alpha \ge 0 \, :  \, \text{the model~\eqref{eq:intro-hamiltonian_with_alpha_beta} with $(\alpha,\beta)$ exhibits a mass gap} \big\} 
    \]
    lies below the graph of the concave function $x\mapsto x/\log x$. See Figure~\ref{figure:phase_diagram} for a visual demonstration, where the dashed line represents this bound. We do not know what is the actual asymptotic behavior of $\psi(\beta)$ as $\beta\to\infty$, and we are not aware of any predictions on this question in the physics literature. It is also worth noting that the actual statement of Theorem~\ref{thm:exponential_clustering_in_lattice_YMH} in fact shows that $$ \lim_{\beta\to \infty} \frac{\psi(\beta)}{\beta} =0 \, . $$ 
    \end{remark}
    
    \subsection{Notation and lattice terminology}
    We end the introduction by fixing some terminology that will be used freely throughout the paper. Throughout, we let 
    \begin{equation}
            \label{eq:choice_of_L}
            L \ge L_0(m,d) \, ,
        \end{equation}
    to be a large enough even number, depending only on the lattice dimension $d\ge 2$ and the mass term $m$. Our cluster expansion will be preformed in the coarse lattice $L \bZ^d$. Indeed, every vertex $v \in L\bZ^d$ will be identified with its corresponding $L$-block, given by
    \[
    Q(v) = v + \{-L/2,\ldots,L/2-1\}^d \, .
    \]
    Note that these $L$-block are in fact disjoint, and their union gives the whole $\bZ^d$. We denote by $\La^\prime \subset L\bZ^d$ the minimal set such that 
    \begin{equation} \label{eq:def_of_la_prime}
        \La_n \subset \bigcup_{v\in \La^\prime} Q(v) \, , 
    \end{equation}
    where we recall that $\La_n= \{-n,\ldots,n\}^d$ is the lattice domain from Theorem~\ref{thm:exponential_clustering_in_lattice_YMH}. While for most vertices $v\in \La^\prime$ we have $Q(v)\subset \La_n$, there could be some (boundary) vertices for which it is not the case. With a slight abuse of notation, we will redefine the $L$-blocks for those vertices as
    \[
    \widetilde Q(v) = Q(v) \cap \La_n  \, ,
    \]
    but continue denoting all $L$-blocks by $\{Q(v)\}_{v\in \La^\prime}$ for notational simplicity. Given any two neighboring vertices $v,u\in L\bZ^d$, we define the \emph{face} between them as the set of edges
    \[
    {\sf F}(u, v) = \big\{ (x,y) \in E(\mathbb{Z}^d)  :  x\in Q(u) \, ,\   y\in Q(v) \big\}.
    \]
    For a non-empty subset $\mathcal{S}\subset L\bZ^d$, we denote by $\cE(\mathcal{S})$ as the set of all edges contained the corresponding union of $L$-blocks, namely  
    \[
    \cE(\mathcal{S}) = E\Big(\bigcup_{v\in \mathcal{S}} Q(v) \Big) \, .
    \]
    We will also denote by $P^\prime(\mathcal{S})$ as the set of all faces contained in $\cE(\mathcal{S})$. Given any subset of faces $\Ga\subset P^\prime(L\bZ^d)$, we set 
    \begin{equation}\label{xsdef}
        X(\Ga) = \big\{ v\in L\bZ^d \, : \, \exists u\in L\bZ^d \, : {\sf F}(u,v) \in \Ga  \big\} \, .
    \end{equation}
    We say that two non-empty subsets $\mathcal{S}_1,\mathcal{S}_2\subset L\bZ^d$ \emph{touch} if there exists $u\in \mathcal{S}_1$ and $v\in \mathcal{S}_2$ which are neighbors on $L\bZ^d$. We say that $\mathcal{S}\subset L\bZ^d$ and $\Ga\subset P^\prime(L\bZ^d)$ touch if $\mathcal{S}, X(\Ga)\subset L\bZ^d$ touch, in the sense defined above. In general, in this paper we will mostly identify subsets of the lattice with its edge elements. In particular, for a finite subset $A\subset \bZ^d$ we will consider the \emph{edge boundary} in this paper, given as
    \[
    \partial A = \big\{ (u,v)\in E(\bZ^d) \, : \, u\in V(A) \, , \ v\in V(\bZ^d\setminus A) \big\}.
    \]
    Finally, we introduce some asymptotic notations that will be convenient for us. We write $A\lesssim B$ or $A = O(B)$ if there exists a constant $C>0$, that may depend only on the dimension $d\ge 2$ and the constant $m>0$, such that $A \le C B$. Throughout, $C,c>0$ will be large and small constants, respectively, that may depend only on $d$ and $m$, and may change from line to line. We write $C_L,c_L>0$ to denote such constants that may depend on $L$ as well, and again may change from line to line. 
   
    \section{Setting up the polymer expansion}
    \label{sec:setting_up_cluster_expansion}
    To prove Theorem~\ref{thm:exponential_clustering_in_lattice_YMH}, we will consider a modified partition function for the model~\eqref{eq:intro-def_of_YMH_measure}, by inserting ``source" terms in the edge variables $x,y\in E(\Lambda)$. We set 
    \begin{equation}
        \label{eq:def_of_V_s}
        V_{\sf s}(t_x,t_y;\boldsymbol{\theta}) = V_{\sf s}(\boldsymbol{\theta}) = t_x \theta_x + t_{y}\theta_y
    \end{equation}
    where $t_x,t_y$ are complex parameters with $\max\{|t_x|,|t_y|\} \le \beta^{-10}$ (say). Defining 
    \[
    \mathcal{H}_{\Lambda,{\sf s}}(\boldsymbol{\theta}) = \mathcal{H}_{\Lambda}(\boldsymbol{\theta}) + V_{\sf s}(\boldsymbol{\theta})
    \]
    with $\mathcal{H}_{\Lambda}$ given by~\eqref{eq:intro_hamiltoniam_with_beta_and_mass}, we denote by
    \begin{equation}
    \label{eq:partition_function_after_sources}
    Z_{\Lambda}^{\sf YMH,s} = \int_{[-\pi,\pi]^{E(\Lambda)}} \exp\Big(-\beta \, \mathcal{H}_{\Lambda,{\sf s}}(\boldsymbol{\theta})\Big) \prod_{e\in E(\Lambda)}  {\rm d}\theta_e
    \end{equation}
    the new modified partition function. We note that all bounds obtained throughout are uniform in that parameters $t_x$ and $t_y$. Hence, to lighten on the notation, we will usually omit the dependence on these parameters. In particular, for the case $t_x=t_y=0$ which corresponds to the original partition function~\eqref{eq:intro-def_of_YMH_measure}, everything we derive below is valid. The only places where the ``sources" play any role is in the actual proof of Theorem~\ref{thm:exponential_clustering_in_lattice_YMH} (given in Section~\ref{sec:convergence_of_cluster_expansion} below) and in the bounds obtained in Section~\ref{sec:dealing_with_non_gaussian_correction}, where we need to take the function $V_{\sf s}$ into account in our derivation. Obviously
    \begin{equation}
    \label{eq:diffrentiating_the_log_partition_function}
     \frac{\partial}{\partial t_x} \, \frac{\partial}{\partial t_x} \bigg|_{t_x=t_y=0} \log Z_{\Lambda}^{\sf YMH,s} =    \bE_{\Lambda}^{\sf YMH}\big[\theta_x \theta_y\big] - \bE_{\Lambda}^{\sf YMH}\big[\theta_x \big]\cdot \bE_{\Lambda}^{\sf YMH}\big[ \theta_y\big] \, ,
    \end{equation}
    which is the reason for considering the extra ``sources" term~\eqref{eq:def_of_V_s}. We would like to have a convenient representation for $\log Z_{\Lambda}^{\sf YMH,s}$, which will allow through~\eqref{eq:diffrentiating_the_log_partition_function} to obtain the desired bound for the covariance. The goal of this section is to find such a representation, given by the next proposition. In words, we obtain a perturbative expansion for the the Yang--Mills--Higgs partition function around a particular Gaussian partition function.
    \begin{proposition}
    \label{proposition:expansion_for_YMH_partition_function_before_log}
        Let $Z_{\Lambda}^{{\sf YMH,s}}$ be the partition function~\eqref{eq:partition_function_after_sources}. Then
        \begin{equation}
        \label{eq:polymer_expansion_for_YMH_partition_function_before_log}
            Z_{\Lambda}^{{\sf YMH,s}} = Z_{\Lambda,f}^{{\sf G}}\cdot \Xi_{\Lambda}(\mathbf{0})\cdot \bigg( \sum_{n\ge 1} \, \sum_{\substack{{\sf P}_1,\ldots, {\sf P}_n\subset \La^\prime \\ {\sf P}_i \, \text{\normalfont connected}}} \,  \prod_{j=1}^{n} {\sf w}({\sf P}_{j})  \cdot \prod_{i<j} \delta({\sf P}_i,{\sf P}_j)  \bigg)\, ,
        \end{equation}
        where $Z_{\Lambda,f}^{\sf G}$ is a Gaussian partition function given by Definition~\ref{def:lattice_proca_field} below, $\Xi_{\Lambda}(\mathbf{0})$ is given by equation~\eqref{eq:def_of_Xi_with_interpolation_s},  ${\sf w}$ is given by equation~\eqref{eq:def_of_w}, $\Lambda'$ is the coarse lattice defined via the equation~\eqref{eq:def_of_la_prime}, and
        \begin{equation}
        \label{eq:def_of_delta_interaction}
        \delta({\sf P}_i,{\sf P}_j) = \begin{cases}
            0 & \text{\normalfont if ${\sf P}_i\cup {\sf P}_j$ is connected}, \\ 1 & \text{\normalfont otherwise.} 
        \end{cases}    
        \end{equation}
    \end{proposition}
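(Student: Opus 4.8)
The plan is to rewrite $Z_\Lambda^{\sf YMH,s}$ in three stages, each introducing one of the objects on the right-hand side of~\eqref{eq:polymer_expansion_for_YMH_partition_function_before_log}, and then to apply the standard Glimm--Jaffe--Spencer polymer resummation. \emph{Stage one} is to isolate the Gaussian (Proca) piece. Writing $\cos(\theta) = 1 - \theta^2/2 + R(\theta)$ with $R$ the higher-order remainder, the action $\beta \mathcal H_{\Lambda,{\sf s}}$ splits as a quadratic form $\tfrac{\beta}{2}\big(\sum_p ({\sf d}\theta_p)^2 + m\sum_e \theta_e^2\big) + V_{\sf s}(\boldsymbol\theta)$ plus a non-Gaussian correction coming from the $R$-terms and from the truncation of the integration domain to $[-\pi,\pi]^{E(\Lambda)}$. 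Factoring out the Gaussian normalization $Z_{\Lambda,f}^{\sf G}$ (with the free boundary Proca covariance of Definition~\ref{def:lattice_proca_field}), what remains is the expectation, under the Proca measure, of $\exp(-\text{(non-Gaussian correction)})$ together with the source insertion. I would package this expectation — before any block decomposition — as $\Xi_\Lambda$ evaluated at an interpolation parameter, so that $\Xi_\Lambda(\mathbf 0)$ is precisely the fully ``decoupled'' value referenced in the statement via~\eqref{eq:def_of_Xi_with_interpolation_s}.

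\emph{Stage two} is the block decoupling. Partition $\Lambda'$ into $L$-blocks $\{Q(v)\}_{v\in\Lambda'}$ and introduce, for each face ${\sf F}(u,v)\in P'(\Lambda')$, an interpolation parameter $s_{\sf F}\in[0,1]$ that continuously turns off the Proca covariance across that face; at $s\equiv 1$ one recovers the true Proca field, at $s\equiv 0$ one gets the block-independent Gaussian field. This is the Glimm--Jaffe--Spencer / Brydges interpolation around a Gaussian measure (see~\cite{Brydges-course}): differentiating $\Xi_\Lambda(\mathbf s)$ in the $s_{\sf F}$ and applying the fundamental theorem of calculus repeatedly produces, for each subset $\Gamma$ of faces that is ``switched on,'' a weight that depends only on the blocks in $X(\Gamma)$ and their nearest neighbours. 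The exponential decay of the Proca covariance (Lemma~\ref{lemma:proca_field_is_massive}), $\exp(-c_d m\,\dist(x,y))$, is what makes each such derivative small once $L$ is large, and it is what ultimately forces the choice $L\ge L_0(m,d)$. Grouping the resulting terms by their connected components in the coarse lattice yields, for every connected $\mathsf P\subset\Lambda'$, an \emph{activity} (or polymer weight) ${\sf w}(\mathsf P)$ as in~\eqref{eq:def_of_w}, and one obtains the intermediate identity
\[
Z_\Lambda^{\sf YMH,s} = Z_{\Lambda,f}^{\sf G}\cdot\Xi_\Lambda(\mathbf 0)\cdot \sum_{\{{\sf P}_j\}} \prod_j {\sf w}({\sf P}_j)\prod_{i<j}\delta({\sf P}_i,{\sf P}_j),
\]
where the sum is over unordered families of connected polymers and $\delta$ is the hard-core constraint~\eqref{eq:def_of_delta_interaction} that distinct polymers in the family must not be connected to one another. \emph{Stage three} is the purely bookkeeping step of checking that the combinatorics of ``which faces are switched on, then which blocks, then grouping into connected pieces'' matches exactly the polymer sum written on the right-hand side, including the $n\ge 1$ indexing and the $\prod_{i<j}\delta$ incompatibility factors; this is a standard identity once Stages one and two are in place.

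The main obstacle will be Stage two: setting up the interpolation so that it genuinely decouples the blocks while keeping the non-Gaussian correction of Stage one intact. Two issues must be handled with care. First, the non-Gaussian correction (the $R$-terms and the domain truncation) must be localized to blocks before the Gaussian interpolation is applied, since otherwise differentiating in $s_{\sf F}$ would not produce a finitely-supported weight; this requires expanding $\exp(-\text{correction})$ itself into a sum over subsets of edges/plaquettes so that each term already lives in a bounded region. Second, one must verify that the interpolated covariances remain positive-definite for all $\mathbf s\in[0,1]^{P'(\Lambda')}$ so that the Gaussian expectations defining $\Xi_\Lambda(\mathbf s)$ make sense — this is where the specific structure of the Proca covariance and the evenness of $L$ enter. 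Once these are arranged, the actual resummation into the polymer form is the classical argument, and the bound $\max\{|t_x|,|t_y|\}\le\beta^{-10}$ guarantees the source insertion contributes only a harmless multiplicative factor that can be absorbed into ${\sf w}$ for the polymers meeting $x$ or $y$. I would defer the quantitative smallness of ${\sf w}(\mathsf P)$ (needed later for convergence) to a subsequent lemma, proving here only the algebraic identity~\eqref{eq:polymer_expansion_for_YMH_partition_function_before_log}.
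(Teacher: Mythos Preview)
Your proposal misses the central organizing step of the paper's argument: the \emph{large field / small field} partition of unity. The polymer weight ${\sf w}({\sf P})$ in~\eqref{eq:def_of_w} is not obtained from a single GJS interpolation of $\Xi_\Lambda(\mathbf s)$; it is a sum over triples $(\mathcal B,\boldsymbol\Gamma,\Delta)$, where $\mathcal B$ is a configuration of \emph{bad} $L$-blocks (those containing an edge with $|\theta_e|\ge T_\beta$), $\boldsymbol\Gamma$ indexes a GJS expansion of the \emph{Gaussian partition function} $Z^{\sf G}_{\mathcal G,\eta}$ (producing the weights $W_2$), and $\Delta$ indexes a \emph{second, separate} GJS expansion of the non-Gaussian correction $\Xi_{\mathcal G,\eta}$ (producing $K_1$). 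Concretely, the paper first inserts $1=\sum_{\mathcal B}\chi_{\mathcal G}\zeta_{\mathcal B}$ via a smooth cutoff at scale $T_\beta=\beta^{-1/2}\log^{d+2}\beta$, arriving at~\eqref{eq:YMH_partition_function_first_reduction}; only \emph{then}, on the good region $\mathcal G$ where the field is small, does the Gaussian approximation hold and the interpolation make sense. Without this step your ``Stage one'' fails: the remainder $R(\theta)$ and the truncation of the domain to $[-\pi,\pi]$ are not small perturbations where $|\theta_e|\sim 1$, so the integrand cannot be treated as a perturbation of the Proca measure globally, and the resulting weights would not be controllable.

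A second, smaller omission is that two interpolations are run in parallel, not one. The ratio $Z^{\sf G}_{\mathcal G,\eta}/Z^{\sf G}_{\Lambda,f}$ is expanded via Claim~\ref{claim:fund_thm_of_calculus} to produce $\exp\big(\sum_\Gamma W_2(\Gamma;\mathcal G)\big)$, and independently $\Xi_{\mathcal G,\eta}$ is expanded to produce $\sum_\Delta K_1(\Delta;\mathcal G)$; the exponential is then expanded as a sum over tuples $\boldsymbol\Gamma$, and all three sums (over $\mathcal B$, $\boldsymbol\Gamma$, $\Delta$) are merged and factored over connected components to give~\eqref{eq:polymer_expansion_for_YMH_partition_function_before_log}. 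Your sketch collapses this into a single interpolation of $\Xi_\Lambda(\mathbf s)$, which does not match the actual definition of ${\sf w}$ and would not reproduce the structure needed for the later estimates (Lemmas~\ref{lemma:large_field_values_are_exponentially_rare}, \ref{lemma:GJS_weights_are_small}, \ref{lemma:bound_for_the_non_gaussian_weight}). The concerns you flag about positive-definiteness and localization of the correction are minor by comparison; the mass term $\frac{m}{2}\sum_e\theta_e^2$ keeps the interpolated action~\eqref{eq:def_of_gaussian_action_with_s} strictly positive definite for all $\mathbf s$, and the localization is handled precisely by the $\chi_{\mathcal G}$ cutoff you omitted.
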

    \noindent
    Note that the sum in~\eqref{eq:polymer_expansion_for_YMH_partition_function_before_log} is in fact finite. An expansion of the form~\eqref{eq:polymer_expansion_for_YMH_partition_function_before_log} is commonly referred to in the literature as a \emph{polymer expansion}; see for instance~\cite[Chapter~5]{Friedli-Velenik_book}. Borrowing the terminology from~\cite{Friedli-Velenik_book}, the polymers are precisely connected subsets ${\sf P}\subset \La^\prime $ and the sum~\eqref{eq:polymer_expansion_for_YMH_partition_function_before_log} is of the form~\cite[Definition~5.2]{Friedli-Velenik_book}. We also remark that~\eqref{eq:def_of_delta_interaction} is an example of hard-core interactions between polymers, see~\cite[Section~5.6]{Friedli-Velenik_book}. 
    While it is not so easy to grasp Proposition~\ref{proposition:expansion_for_YMH_partition_function_before_log} just yet (as most notation there are yet to be defined), we devote this section to systematically introduce all notation which appear in Proposition~\ref{proposition:expansion_for_YMH_partition_function_before_log}, and show how the polymer expansion for $Z_{\Lambda}^{\sf YMH, s}$ is derived. 
    
    \subsection{The Proca field}
    As we already mentioned above, the basic idea of our analysis is that for $\beta$ large, the Yang--Mills--Higgs measure~\eqref{eq:intro-def_of_YMH_measure} is a small perturbation of a particular massive Gaussian field, known as the (Euclidean) lattice Proca field. 
    \begin{definition}
        \label{def:lattice_proca_field}
        Let $\Lambda\subset \bZ^d$ be a finite domain and let $m>0$. The (Gaussian) Proca action on $\Lambda$ is the function $S_{\Lambda} : \bR^{E(\Lambda)} \to \bR_{\ge 0}$ given by
        \begin{equation}
        \label{eq:def_of_gaussian_action}
            S_{\Lambda} (\boldsymbol{\theta}) = \frac{1}{2} \sum_{p\in P(\Lambda)} \big({\sf d}\theta_p\big)^2 + \frac{m}{2}\sum_{e\in E(\Lambda)} (\theta_e)^2 \, .    
        \end{equation}
        For $\eta:\partial \Lambda \to \bR$  and $\beta>0$, the lattice Proca field on $\Lambda$ with boundary conditions $\eta$ is the Gibbs measure on $\boldsymbol{\theta} = (\theta_e)_{e\in E(\Lambda)} \subset \bR^{E(\Lambda)}$ given by 
        \begin{equation}
        \label{eq:def_of_lattice_proca_field}
        {\rm d}\mu_{\Lambda,\eta}^{\sf G}(\boldsymbol{\theta}) = \frac{1}{Z_{\Lambda,\eta}^{\sf G}} \exp\Big( - \beta S_{\Lambda}(\boldsymbol{\theta})\Big) \prod_{e\in E(\Lambda) \setminus \partial \Lambda} {\rm d} \theta_e \prod_{e\in \partial \Lambda} \delta_{\eta_e}(\theta_e) \, .    
        \end{equation}
        Similarly, we let $\mu_{\Lambda,{f}}^{\sf G}$ and $Z_{\Lambda,f}^{\sf G}$ to be the Gibbs measure and the partition function of the lattice Proca field with free boundary conditions, defined exactly as in~\eqref{eq:def_of_lattice_proca_field} but with no restrictions on the boundary. We denote by $\bE_{\La,\eta}^{\sf G}[\cdot]$ and $\bE_{\La,f}^{\sf G}[\cdot ]$ the corresponding expectation operators. 
    \end{definition}
    We shall see below (Lemma~\ref{lemma:proca_field_is_massive}) that the lattice Proca field is indeed massive, i.e., has exponential decay of correlations. Expanding around a given Gaussian field in the large $\beta$ regime is a common technique in the study of Yang--Mills--Higgs measures, see~\cite{Balaban-Imbrie-Jaffe-Brydges, Frohlich-Spencer, Glimm-Jaffe-book, Glimm-Jaffe-Spencer, Osterwalder-Seiler}. We shall employ a similar strategy. As we already mentioned, our treatment here is in large part borrowed from~\cite{Balaban-Imbrie-Jaffe-Brydges}, which in turn is borrowed from~\cite{Glimm-Jaffe-Spencer}. Set
    \begin{equation*}
        g(t) = 1- \cos(t) - \frac{t^2}{2} \, ,
    \end{equation*}
    and let
    \begin{equation}
        \label{eq:def_of_V}
        V_{\Lambda}(\boldsymbol\theta) = \sum_{p\in P(\Lambda)} g({\sf d}\theta_p) + m \sum_{e\in E(\Lambda)} g(\theta_e)  + V_{{\sf s}}(\boldsymbol\theta)\, , 
    \end{equation}
    with $V_{{\sf s}}$ given by~\eqref{eq:def_of_V_s}.
    The Yang--Mills--Higgs density~\eqref{eq:intro-def_of_YMH_measure} can formally be written as
    \begin{align}
    \label{eq:heuristic_gaussian_approximation}
    \nonumber 
    {\rm d}\mu_{\Lambda}^{\sf YMH} (\boldsymbol{\theta}) &\propto e^{-\beta V_{\Lambda}(\boldsymbol{\theta}) - \beta S_{\Lambda}(\boldsymbol{\theta})} \prod_{e\in E(\Lambda)} \one_{\{|\theta_e|\le \pi \}}{\rm d}\theta_e \\ &\propto e^{-\beta V_{\Lambda}(\boldsymbol{\theta})}\cdot \Big(\prod_{e\in E(\Lambda)} \one_{\{|\theta_e|\le \pi \}} \Big) \, {\rm d}\mu_{\Lambda,f}^{\sf G}(\boldsymbol{\theta})  \, .
    \end{align}
    Heuristically, both $e^{-\beta V_{\Lambda}}$ and the product over indicators are expected to be (typically) close to 1, since most field elements sampled from the lattice Proca field have
    \[
    |\theta_e| \approx m^{-1/2} \beta^{-1/2}\, ,
    \]
    and in that range we have
    \[
    g(\theta_e) \lesssim (\theta_e)^{4} \lesssim m^{-2} \beta^{-2} \, .
    \]
    Therefore, $\beta g(\theta_e)$ is expected to be small for most field elements, assuming that $\beta$ is large enough. 
    
    \subsection{Partition of unity for field values}\label{subsection:partition_of_unity}
    To make the above reasoning rigorous, we first need to remove from consideration edges where the Yang--Mills--Higgs field is atypically large. Letting 
    \begin{equation}
        \label{eq:def_of_T_beta}
        T_\beta = \frac{\log^{d+2}(\beta)}{\sqrt{\beta}} \, ,
    \end{equation}
    we say that a field element $\theta_e$, $e\in E(\Lambda)$ is \emph{large} if $|\theta_e| \ge T_\beta$, and otherwise we say that the field element is \emph{small}. 
    \begin{definition}
        \label{def:good_and_bad_blocks}
        We say that an $L$-block $v\in \La^\prime$ is \emph{good} if all of its associated field elements $\theta_e$ from $e\in \cE\big(Q(v)\big)$ are small. 
        Otherwise, we say that the $L$-block is \emph{bad}. We denote by $\cB\subset \La^\prime$ the collection of all bad $L$-blocks, and by $\mathcal{G} = \La^\prime\setminus \cB$ the collection of all good $L$-blocks. 
    \end{definition}
    \noindent
    We note that according to Definition~\ref{def:good_and_bad_blocks}, an $L$-block is declared bad if there is a large field element in its interior or its boundary. The heuristic Gaussian approximation~\eqref{eq:heuristic_gaussian_approximation}  is only valid for small field elements, so our goal now is to first isolate the region $\cB$ where the field might be large. For technical reasons that will become clear later on, we will remove this region in a ``smooth" way. Indeed, let $\chi:\bR\to [0,1]$ be a $C^\infty$ smooth function, such that\footnote{To see that such a function exists, simply take the standard bump-function $\eta(t) = e^{-1/(1-t^2)} \, \one_{\{|t|\le 1\}}$ and set $$\chi(t) = \frac{\eta(t/2)}{\eta(t/2) + \eta(1-|t|/2)}\, .$$ We remark that this is a specific example of a function of Gevrey class~\cite{Hormander-PDEI}, and that the constant $2$ in item (3) cannot be improved to 1, as compact support and real-analyticity cannot co-exist.}:
    \begin{enumerate}
        \item[(1)] $\chi(t) = \chi(-t)$ for all $t\in \bR$;
        \item[(2)] $\chi(t) = 1$ if $|t|\le 1$ and $\chi(t) = 0$ if $|t|\ge 2$; 
        \item[(3)] For all $k\ge 1$, we have $\displaystyle \sup_{t\in \bR} |\chi^{(k)}(t)| \le C (k!)^{2}$ for some $C>0$. 
    \end{enumerate}
    Setting $\zeta(t) = 1- \chi(t)$, we can write 
    \begin{align*}
    \nonumber
        1 &= \prod_{e\in E(\Lambda)} \bigg(\chi\Big(\frac{|\theta_e|}{T_\beta}\Big) + \zeta\Big(\frac{|\theta_e|}{T_\beta}\Big) \bigg) \\ &= \sum_{\mathcal{L} \subset E(\Lambda)} \bigg( \prod_{e\in E(\La)\setminus \mathcal{L} } \chi\Big(\frac{|\theta_e|}{T_\beta}\Big) \cdot  \prod_{e\in \mathcal{L}}\zeta\Big(\frac{|\theta_e|}{T_\beta}\Big)\bigg) \, .
    \end{align*}
    In the above sum, the summand is zero unless we have $\mathcal{L}\subset\cE (\cB)$, where $\cB$ is the collection of bad $L$-blocks, as given by Definition~\ref{def:good_and_bad_blocks}. Therefore, we can write 
    \begin{equation}   \label{eq:partition_of_unity_into_large_and_small_fields}
        1  = \sum_{\cB\subset \La^\prime} \chi_{\cG} \cdot \zeta_{\cB} \, , 
    \end{equation}
    where
    \begin{equation} \label{eq:def_of_chi_G}
        \chi_{\cG} = \prod_{v\in \cG} \Big( \prod_{e\in E(Q(v))} \chi\Big(\frac{|\theta_e|}{T_\beta}\Big) \Big) \, ,
    \end{equation}
    and
    \begin{equation} \label{eq:def_of_zeta_B}
        \zeta_{\cB} = \mathbf{1}_{\{\cB \ \text{is the collection of all bad $L$-blocks}\}} \cdot \sum_{\mathcal{L} \subset \cE(\cB)} \bigg(\prod_{e\in \mathcal{L}} \zeta\Big(\frac{|\theta_e|}{T_\beta}\Big) \cdot \prod_{\substack{e\in E(\La) \setminus \mathcal{L} \\ e \in \cE(\cB) }} \chi\Big(\frac{|\theta_e|}{T_\beta}\Big)\bigg)  \, .
    \end{equation}
    The edges in $\mathcal{G}$ are those where the Gaussian approximation will be valid. We denote by $S_{\Lambda,\mathcal{G}}$ the Gaussian action~\eqref{eq:def_of_gaussian_action}, after we throw away all interactions coming from edges and plaquettes which do not intersect $\mathcal{G}$, that is 
    \begin{equation}
        \label{eq:gaussian_action_in_G}
        S_{\Lambda,\mathcal{G}}(\boldsymbol{\theta}) = \frac{1}{2} \sum_{\substack{p\in P(\Lambda) \\ E(p)\cap \cE (\mathcal{G}) \not= \emptyset}}  \big({\sf d}\theta_p\big)^2 + \frac{m}{2}\sum_{e\in \cE (\mathcal{G})} (\theta_e)^2 \, . 
    \end{equation}
    We will also denote by $V_{\Lambda,\mathcal{G}}$ the action~\eqref{eq:def_of_V} after we threw away interactions coming from edges and plaquettes which do not intersect $\mathcal{G}$, defined in a similar way as~\eqref{eq:gaussian_action_in_G}. Setting
    \begin{equation}\label{eq:action_restricted_to_H}
        S_{\Lambda}^{\mathcal{B}}(\boldsymbol{\theta}) = S_{\Lambda}(\boldsymbol{\theta}) - S_{\Lambda,\mathcal{G}}(\boldsymbol{\theta}) \, , \quad \text{and}\quad V_{\Lambda}^{\mathcal{B}}(\boldsymbol{\theta}) = V_{\Lambda}(\boldsymbol{\theta}) - V_{\Lambda,\mathcal{G}}(\boldsymbol{\theta}) \, ,
    \end{equation}
    the partition of unity~\eqref{eq:partition_of_unity_into_large_and_small_fields} implies that 
    \begin{align*}
          Z_{\Lambda}^{\sf YMH,s} &= \int_{[-\pi,\pi]^{E(\Lambda)}} \exp\Big(-\beta \, \mathcal{H}_{\Lambda,{\sf s}}(\boldsymbol{\theta})\Big) \prod_{e\in E(\Lambda)}  {\rm d}\theta_e   \\ &= \sum_{\cB\subset \La^\prime} \int_{[-\pi,\pi]^{\cE(\cB)}} e^{-\beta V_{\Lambda}^{\mathcal{B}}(\boldsymbol{\eta})- \beta S_{\Lambda}^{\mathcal{B}}(\boldsymbol{\eta})} \cdot \zeta_{\mathcal{B}} \cdot \bigg(\int_{\bR^{\cE (\mathcal{G})}} e^{-\beta V_{\Lambda,\mathcal{G}}(\boldsymbol{\theta})- \beta S_{\Lambda,\mathcal{G}}(\boldsymbol{\theta})} \cdot \chi_{\mathcal{G}} \, {\rm d}\boldsymbol{\theta}_{\mathcal{G}}\bigg) \, {\rm d}\boldsymbol{\eta}_{\mathcal{B}} \, , 
    \end{align*}
    where ${\rm d}\boldsymbol{\theta}_{\mathcal{G}} = \prod_{e\in \cE (\mathcal{G})} {\rm d} \theta_e$. We observe that $S_{\Lambda,\mathcal{G}}$ is exactly the Proca field action from Definition~\ref{def:lattice_proca_field}, restricted to the edges contained in $\cE(\mathcal{G})$. Hence,
    \[
    \int_{\bR^{\cE (\mathcal{G})}} e^{-\beta V_{\Lambda,\mathcal{G}}(\boldsymbol{\theta})- \beta S_{\Lambda,\mathcal{G}}(\boldsymbol{\theta})} \cdot \chi_{\mathcal{G}} \, {\rm d}\boldsymbol{\theta}_{\mathcal{G}} = Z_{\mathcal{G},\eta}^{\sf G} \cdot \bE_{\mathcal{G},\eta}^{\sf G}\big[e^{-\beta V_{\Lambda,\mathcal{G}}(\boldsymbol{\theta})} \chi_{\mathcal{G}}\big] \, , 
    \]
    where $\eta$ are the boundary conditions imposed by the configuration in $\mathcal{B}$. Denoting by
    \begin{equation}
        \label{eq:def_of_Xi_G_eta}
        \Xi_{\mathcal{G},\eta} = \bE_{\mathcal{G},\eta}^{\sf G}\big[e^{-\beta V_{\Lambda,\mathcal{G}}(\boldsymbol{\theta})} \chi_{\mathcal{G}}\big]  \, ,
    \end{equation}
    we arrive at the formula
    \begin{equation}
        \label{eq:YMH_partition_function_first_reduction}
        Z_{\Lambda}^{\sf YMH,s} = \sum_{\mathcal{B}\subset \La^\prime } \int_{[-\pi,\pi]^{\cE(\cB)}} e^{-\beta V_{\Lambda}^{\mathcal{B}}(\boldsymbol{\eta})- \beta S_{\Lambda}^{\mathcal{B}}(\boldsymbol{\eta})} \cdot \zeta_{\mathcal{B}} \cdot Z_{\mathcal{G},\eta}^{\sf G} \cdot \Xi_{\mathcal{G},\eta}\,  {\rm d}\boldsymbol{\eta}_{\mathcal{B}} \, .
    \end{equation}
    The equality~\eqref{eq:YMH_partition_function_first_reduction} is the starting point of the proof of Proposition~\ref{proposition:expansion_for_YMH_partition_function_before_log}. From this point, the analysis roughly consists of three steps: 
    \begin{enumerate}
        \item Deriving an expansion for the Gaussian partition function $Z_{\mathcal{G},\eta}^{\sf G}$;
        \item Deriving a similar expansion for $\Xi_{\mathcal{G},\eta}$;
        \item Combining both expansions with the sum over $\mathcal{B}$ to arrive at one uniting expansion. 
    \end{enumerate}
    We note that uniting several different expansions into one compatible polymer system, as we preform below, is a recurring theme in constructive statistical mechanics; see for example Brydges's lecture notes~\cite[page~157]{Brydges-course} for a general discussion on this point.
    
    \subsection{Expansion for the Gaussian partition function} \label{subsection:expansion_of_gaussian_partition}
    Before explaining the expansion for the Gaussian term, we introduce some notation. Let 
    \begin{equation*}
    r_\beta = \log^2\beta\, ,    
    \end{equation*}
    and recall the that $\cB$ and $\cG = \La^\prime \setminus \cB $ are given by Definition~\ref{def:good_and_bad_blocks}. We denote by 
    \begin{equation}
        \label{eq:def_of_B_1}
        \mathcal{B}_1 = \big\{ v\in \La^\prime \, : \, \text{dist}(v, \mathcal{B}) \le r_\beta \big\} 
        \, .
    \end{equation}
    In words, $\mathcal{B}_1$ is the $r_{\beta}$-neighborhood of $\mathcal{B}$ in the coarse lattice $L\Z^d$. We further denote by 
    $$\mathcal{G}_1 = \La^\prime \setminus \mathcal{B}_1$$ and note that $\mathcal{G}_1 \subset \mathcal{G}$. The idea is that in $\mathcal{G}_1$ the boundary effect imposed by the field values in $\mathcal{B}$ already has a negligible effect, while since $r_\beta^{d} = o(\beta T_\beta^2)$ the entropy gain by considering $\mathcal{B}_1$ instead of $\mathcal{B}$ is also not significant. 
    To expand the Gaussian partition function $Z_{\mathcal{G},\eta}^{\sf G}$, we shall appeal to the Glimm--Jaffe--Spencer expansion~\cite{Glimm-Jaffe-book,Glimm-Jaffe-Spencer}. The basic idea is to interpolate between the original Gaussian Proca field and another Gaussian field where all $L$-blocks are independent. Recall that $P^\prime(\mathcal{G}_1)$ is the set of all faces between $L$-blocks in $\mathcal{G}_1$. We will consider a collection of parameters $\mathbf{s} = (s_{\sf F})_{{\sf F} \in P^\prime(\mathcal{G}_1)}$, all in $[0,1]$, and define a new Gaussian action via
    \begin{equation}
        \label{eq:def_of_gaussian_action_with_s}
        S_{\Lambda,\mathcal{G}}(\boldsymbol{\theta};\mathbf{s}) = \frac{1}{2} \sum_{\substack{p\in P(\Lambda) \\ E(p)\cap \cE(\mathcal{G}) \not= \emptyset}}  \sigma_p(\mathbf{s}) \big({\sf d}\theta_p\big)^2 + \frac{m}{2}\sum_{e\in \cE(\mathcal{G})} (\theta_e)^2 \, , 
    \end{equation}
    where,
    \begin{equation}
        \label{eq:def_of_sigma_p}
        \sigma_p(\mathbf{s}) = \begin{cases}
            s_{\sf F} & \text{if} \ p \ \text{contains an edge from the face} \  {\sf F} \in P^\prime(\mathcal{G}_1) \, , \\ 1 & \text{else.}
        \end{cases}
    \end{equation}
    There are some plaquettes that contain edges from multiple faces, namely, the plaquettes touching the corners of the $L$-blocks. For these plaquettes, it does not really matter how we choose $s_{\sf F}$, as long as we do so consistently. For concreteness, here we will choose the minimal face with respect to the natural lexicographical order. Note that, when $s_{\sf F} \equiv 1$, that action~\eqref{eq:def_of_gaussian_action_with_s} reduces to~\eqref{eq:def_of_gaussian_action}. Similarly as in Definition~\ref{def:lattice_proca_field}, we will denote 
    \begin{equation*}
        \label{eq:def_of_gaussian_mesure_with_s}
        {\rm d}\mu_{\mathcal{G},\eta,\mathbf{s}}^{\sf G}(\boldsymbol{\theta}) = \frac{1}{Z_{\mathcal{G},\eta,\mathbf{s}}^{\sf G}} \exp\Big( - \beta S_{\Lambda,\mathcal{G}}(\boldsymbol{\theta};\mathbf{s})\Big) \prod_{e\in \cE(\mathcal{G}) \setminus \partial \cE(\mathcal{G}) } {\rm d} \theta_e \prod_{e\in \partial \cE(\mathcal{G})} \delta_{\eta_e}(\theta_e) \, .    
    \end{equation*}   
    the probability measure (and partition function) which corresponds to~\eqref{eq:def_of_gaussian_action_with_s}. Here $\partial \cE(\mathcal{G})$ is the set of all boundary edges in $\cE(\cG)$, that is, the edges in a face connecting a block from $\cG$ to a block from $\cB$. 
    When $s_p \equiv 0$, the field values in different $L$-blocks in $\mathcal{G}_1$ are independent. We will sometimes also denote the relevant partition function as $Z_{\mathcal{G},\eta,\mathbf{s}}^{\sf G} = Z_{\mathcal{G},\eta}^{\sf G}(\mathbf{s})$, depending on how much we want to emphasis the parameters $\mathbf{s}$. 
    \begin{claim}
        \label{claim:fund_thm_of_calculus}
        Let $F: [0,1]^{S} \to \bR$ be a smooth function, where $S$ is some finite set. Then
        \begin{equation}\label{eq:fund_thm_of_calculus}
          F(\mathbf{1}) = \sum_{\Ga\subset S} \Big(\int_{[0,1]^{\Ga}} \partial^\Ga  F(\mathbf{s}_{\Ga}) \, {\rm d} \mathbf{s}_{\Ga} \Big) \, ,
        \end{equation}
        where $\Ga$ is summed over all subsets of $S$ (including the null set) and 
        \[
        \partial^\Ga = \prod_{\gamma\in \Ga} \frac{\partial}{\partial s_\gamma} \,, 
        \qquad (\mathbf{s}_\Ga)_\gamma = \begin{cases}
            s_\gamma & \gamma\in \Ga \, , \\  0 & \text{\normalfont otherwise.}
        \end{cases} 
        \]
        When $\Ga = \emptyset$, the derivative and integral in~\eqref{eq:fund_thm_of_calculus} are omitted. 
    \end{claim}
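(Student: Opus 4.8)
The plan is to prove~\eqref{eq:fund_thm_of_calculus} by induction on the cardinality of $S$, peeling off one coordinate at a time with the one-dimensional fundamental theorem of calculus. When $S=\emptyset$ there is nothing to prove: $F$ is a constant, and the right-hand side consists only of the $\Ga=\emptyset$ term, which by the stated convention equals $F(\mathbf{1})$.

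For the inductive step, fix $\gamma_0\in S$, set $S'=S\setminus\{\gamma_0\}$, and split the variables as $\mathbf{s}=(s_{\gamma_0},\mathbf{s}')$ with $\mathbf{s}'\in[0,1]^{S'}$. Applying the usual fundamental theorem of calculus in the variable $s_{\gamma_0}$ and then evaluating the remaining coordinates at $\mathbf{1}'\in[0,1]^{S'}$ gives
\[
F(\mathbf{1}) = F(0,\mathbf{1}') + \int_0^1 \frac{\partial F}{\partial s_{\gamma_0}}(t,\mathbf{1}') \, {\rm d}t \, .
\]
Both $\mathbf{s}'\mapsto F(0,\mathbf{s}')$ and $\mathbf{s}'\mapsto \int_0^1 \partial_{s_{\gamma_0}}F(t,\mathbf{s}')\,{\rm d}t$ are smooth functions on $[0,1]^{S'}$ (differentiation under the integral sign is justified since $F$ is smooth on a compact domain), so the inductive hypothesis applies to each of them. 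For the first function, taking $\partial^{\Ga'}$ of $F(0,\cdot)$ and setting the coordinates in $S'\setminus\Ga'$ to $0$ is precisely $\partial^{\Ga'}F$ with \emph{all} coordinates outside $\Ga'$ — including $\gamma_0$ — set to $0$; this produces exactly the terms of~\eqref{eq:fund_thm_of_calculus} indexed by subsets $\Ga\subset S$ with $\gamma_0\notin\Ga$. For the second function, commuting $\partial^{\Ga'}$ past the integral over $t$ turns the corresponding term into $\int_{[0,1]^{\Ga'\cup\{\gamma_0\}}} \partial^{\Ga'\cup\{\gamma_0\}} F(\mathbf{s}_{\Ga'\cup\{\gamma_0\}}) \, {\rm d}\mathbf{s}_{\Ga'\cup\{\gamma_0\}}$, and as $\Ga'$ ranges over subsets of $S'$ these are exactly the terms indexed by subsets $\Ga\subset S$ with $\gamma_0\in\Ga$. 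Since every subset of $S$ belongs to exactly one of these two families, summing the two contributions reproduces the full right-hand side of~\eqref{eq:fund_thm_of_calculus}, completing the induction.

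There is no real obstacle here: the statement is the standard interpolation identity underlying the Glimm--Jaffe--Spencer expansion, and the only point requiring a word of care is the bookkeeping of which coordinates are zeroed after differentiation (handled by the identification above) together with the elementary justification of differentiating under the integral sign.
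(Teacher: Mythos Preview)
Your proof is correct and follows essentially the same approach as the paper: induction on $|S|$ combined with the one-variable fundamental theorem of calculus. The only cosmetic difference is the order of the two steps (you apply the fundamental theorem first and then the inductive hypothesis, whereas the paper applies the inductive hypothesis first and then the fundamental theorem), which is immaterial.
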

    \begin{proof}
        We proceed by induction on the size of $S$. When $|S|=1$, the equality~\eqref{eq:fund_thm_of_calculus} is nothing but the fundamental theorem of calculus. For the induction step, we write $S = S^\prime \cup \{s\}$ and get that
    \[
     F(\mathbf{1},s) \stackrel{\eqref{eq:fund_thm_of_calculus}}{=} \sum_{\Ga\subset S^\prime} \Big(\int_{[0,1]^{\Ga}} \partial^\Ga  F(\mathbf{s}_{\Ga},s) \, {\rm d} \mathbf{s}_{\Ga} \Big) \, .
    \]
    Another application of the fundamental theorem of calculus, applied this time with the variable $s\in[0,1]$, completes the proof of the claim. 
    \end{proof}
    Applying Claim~\ref{claim:fund_thm_of_calculus} with $S = P^\prime(\mathcal{G}_1)$ and $F(\mathbf{s}) = \log Z_{\mathcal{G},\eta}^{\sf G}(\mathbf{s})$, we see that
    \begin{equation}
        \label{eq:gaussian_partition_function_interpolation_using_fund_thm}
        \log Z_{\mathcal{G},\eta}^{\sf G} = \log Z_{\mathcal{G},\eta,\mathbf{1}}^{\sf G} = \sum_{\Gamma \subset P^\prime(\mathcal{G}_1)} W_1(\Ga;\mathcal{G}) \, ,
    \end{equation}
    where $\Ga$ is summed over all subsets of faces in $P^\prime(\mathcal{G}_1)$ and
    \begin{equation}
        \label{eq:def_of_W_1}
        W_1(\Ga;\mathcal{G}) = \int_{[0,1]^{\Gamma}} \partial^\Gamma \log   Z_{\mathcal{G},\eta}^{\sf G}(\mathbf{s}_{\Ga}) \,  {\rm d} s_{\Ga} \, .
    \end{equation}
    Repeating the same expansion for $\mathcal{G} = \La^\prime$ (in which case $\mathcal{B} = \emptyset$ and there are no boundary conditions), we conclude from~\eqref{eq:gaussian_partition_function_interpolation_using_fund_thm} that 
    \begin{equation}
    \label{eq:ratio_of_gaussian_partition_functions_after_expansion}
        Z_{\mathcal{G},\eta}^{\sf G} = Z_{\Lambda,f}^{\sf G}\cdot \frac{Z_{\mathcal{G},\eta}^{\sf G}(\mathbf{0})}{Z_{\Lambda,f}^{\sf G}(\mathbf{0})} \cdot \exp\Big( \sum_{\Ga\not=\emptyset} W_2(\Ga;\mathcal{G}) \Big)
    \end{equation}
    where
    \begin{equation}
        \label{eq:def_of_W_2}
        W_2(\Ga;\mathcal{G}) = W_1(\Ga;\mathcal{G}) - W_1(\Ga;\Lambda^\prime) \, .
    \end{equation}
    Plugging~\eqref{eq:ratio_of_gaussian_partition_functions_after_expansion} into~\eqref{eq:YMH_partition_function_first_reduction} we arrive at
    \begin{align}
        \label{eq:YMH_partition_function_second_reduction}
        Z_{\Lambda}^{\sf YMH,s}  &= Z_{\Lambda,f}^{\sf G} \cdot \sum_{\mathcal{B}\subset E(\Lambda)} \int_{[-\pi,\pi]^{\cE(\mathcal{B})}} e^{-\beta V_{\Lambda}^{\mathcal{B}}(\boldsymbol{\eta})- \beta S_{\Lambda}^{\mathcal{B}}(\boldsymbol{\eta})} \notag \\
        &\hskip1in \cdot \zeta_{\mathcal{B}} \cdot \frac{Z_{\mathcal{G},\eta}^{\sf G}(\mathbf{0})}{Z_{\Lambda,f}^{\sf G}(\mathbf{0})} \cdot \exp\Big( \sum_{\Ga\not=\emptyset} W_2(\Ga;\mathcal{G}) \Big) \cdot \Xi_{\mathcal{G},\eta}\,  {\rm d}\boldsymbol{\eta}_{\mathcal{B}} \, .
    \end{align}
    The next simple claim about the weights $W_2$ will be used several times in what follows, so we document it here for future reference. 
    \begin{claim}
        \label{claim:condition_for_which_W_2_non_zero}
        We have $W_2(\Ga,\cG) = 0$ if either:
        \begin{enumerate}
            \item $\Ga$ is not connected; or
            \item $X(\Ga) \cup \cB_1$ is not connected (i.e., $\Ga$ does not touch $\cB_1$), where $\cB_1$ is given by~\eqref{eq:def_of_B_1}. 
        \end{enumerate}
    \end{claim}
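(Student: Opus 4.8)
The plan is to rewrite the $\mathbf{s}$-derivatives appearing in $W_1$ as truncated correlations of localized observables and then read off the two vanishing criteria from the exact factorization of the interpolated Proca field across cut faces. Since $S_{\Lambda,\cG}(\boldsymbol{\theta};\mathbf{s})$ is affine in each coordinate $s_{\sf F}$, all mixed second and higher $s$-derivatives of the action vanish, and differentiating $\log Z_{\cG,\eta}^{\sf G}(\mathbf{s})$ repeatedly under the integral sign yields, for every $\mathbf{s}$,
\[
\partial^{\Ga}\log Z_{\cG,\eta}^{\sf G}(\mathbf{s})\;=\;\kappa_{\mathbf{s}}\big((R_{\gamma})_{\gamma\in\Ga}\big),\qquad R_{\gamma}\;=\;-\frac{\beta}{2}\sum_{p\,:\,\sigma_p(\mathbf{s})=s_{\gamma}}({\sf d}\theta_p)^2,
\]
where $\kappa_{\mathbf{s}}$ is the joint cumulant under $\mu_{\cG,\eta,\mathbf{s}}^{\sf G}$ and each $R_{\gamma}$ is a quadratic functional localized at the face $\gamma$ (supported on the edges of the two $L$-blocks meeting $\gamma$ and their neighbours). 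Thus $W_1(\Ga;\cG)$, and --- with $\cB=\emptyset$ --- $W_1(\Ga;\La^\prime)$, are integrals over $\mathbf{s}_{\Ga}\in[0,1]^{\Ga}$ of such cumulants in the Gaussian field in which every face of $P^\prime(\cG_1)$ outside $\Ga$ has been switched off. The one exact fact used throughout is that setting the weight of a face to $0$ deletes every term of the action coupling the two $L$-blocks across it; hence $\mu_{\cG,\eta,\mathbf{s}_{\Ga}}^{\sf G}$ factorizes as a product of independent Gaussian fields over the connected clusters of the graph whose edges are the active faces.

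For criterion (1): if $\Ga$ is disconnected, pick a connected component $\Ga_1$ and set $\Ga_2=\Ga\setminus\Ga_1\neq\emptyset$, so that $X(\Ga_1)$ and $X(\Ga_2)$ do not touch. Since the only active faces of $P^\prime(\cG_1)$ are those in $\Ga$, the families $(R_{\gamma})_{\gamma\in\Ga_1}$ and $(R_{\gamma})_{\gamma\in\Ga_2}$ depend on distinct independent clusters, so their joint cumulant vanishes; this holds for every $\mathbf{s}_{\Ga}$, and identically with $\La^\prime$ in place of $\cG$, so $W_2(\Ga;\cG)=W_1(\Ga;\cG)-W_1(\Ga;\La^\prime)=0$.

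For criterion (2): if $X(\Ga)$ does not touch $\cB_1$, then the cluster of active faces containing $X(\Ga)$ never leaves $\cG_1$ and never meets the collar $\cB_1\setminus\cB$, the region $\cB$, or the Dirichlet boundary $\partial\cE(\cG)$. Consequently the restriction of $\mu_{\cG,\eta,\mathbf{s}_{\Ga}}^{\sf G}$ to that cluster is, for every $\mathbf{s}_{\Ga}$, literally the same Gaussian law as the corresponding restriction in the $\La^\prime$-computation (in which $\cB=\emptyset$ and there is no boundary). Since the observables $R_{\gamma}$, $\gamma\in\Ga$, are measurable with respect to that cluster, the cumulants agree, whence $W_1(\Ga;\cG)=W_1(\Ga;\La^\prime)$ and $W_2(\Ga;\cG)=0$.

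The step I expect to require the most care is the bookkeeping around the collar $\cB_1\setminus\cB$, which lies inside $\cG$ but carries no interpolation parameter: one must check that in criterion (1) the cut really separates the two pieces --- splitting off, if necessary, any component of $\Ga$ that can reach the collar (and thence $\cB$ and $\eta$) through weight-$1$ faces --- and that in criterion (2) the cluster of $X(\Ga)$, including the extra blocks reached through corner plaquettes, genuinely stays sealed inside $\cG_1$, so that the identification of the two Gaussian laws is exact and not merely approximate. These are all consequences of the exact locality noted above, and in particular do not use any quantitative decay estimate for the massive Proca field.
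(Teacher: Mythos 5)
Your argument is essentially the paper's: the paper likewise observes that with every face of $P^\prime(\cG_1)\setminus\Ga$ switched off the Gaussian partition function factors over the connected components of $\Ga$ (so the mixed derivative $\partial^{\Ga}$ of its logarithm vanishes when $\Ga$ is disconnected), and that when $\Ga$ does not touch $\cB_1$ the integrand defining $W_1(\Ga;\cG)$ coincides with that defining $W_1(\Ga;\La^\prime)$, giving $W_2(\Ga;\cG)=0$; your cumulant formulation is just a repackaging of this factorization. The collar subtlety you flag (components of $\Ga$ that communicate with $\cB_1\setminus\cB$, and hence with $\eta$, through weight-one faces) is not addressed in the paper's two-line proof either, so your attempt follows the same route and is at the same level of detail as the original.
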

    \begin{proof}
        If $\Ga$ is not connected, then $Z_{\Lambda,\eta}^{\sf G}(\mathbf{s}_\Ga)$ factors into contributions from its connected components. Therefore, when applying $\partial^\Ga$ to $\log Z_{\Lambda,\eta}^{\sf G}(\mathbf{s}_\Ga)$ each component is necessarily differentiated by a variable not present in the component, and in view of~\eqref{eq:def_of_W_1} we have $W_1(\Ga;\cG) = 0$. Furthermore, if $\Ga$ does not touch $\cB_1$, it is apparent from~\eqref{eq:def_of_W_1} that
        \[
        W_1(\Ga; \cG) = W_1(\Ga; \La^\prime) \, ,
        \]
        and hence $W_2(\Gamma,\cG) = 0$ in this case as well.
    \end{proof}
    \subsection{Expansion for non-Gaussian correction} \label{subsection:expansion_for_non_gaussian_correction}
    We derive a similar expansion for the term 
    \[
    \Xi_{\mathcal{G},\eta} = \bE_{\mathcal{G},\eta}^{\sf G}\big[e^{-\beta V_{\Lambda,\mathcal{G}}(\boldsymbol{\theta})} \chi_{\mathcal{G}}\big] = \int_{\bR^{\cE(\mathcal{G})}}e^{-\beta V_{\Lambda,\mathcal{G}}(\boldsymbol{\theta})} \chi_{\mathcal{G}}(\boldsymbol{\theta}) \, {\rm d}\mu_{\mathcal{G},\eta}^{\sf G} \, .
    \]
    Recall that $\mu_{\Lambda,\eta,\mathbf{s}}^{\sf G}$ is the probability measure associated with the Gaussian action~\eqref{eq:def_of_gaussian_action_with_s}. In a similar fashion, we can interpolate the action $V_{\La,\mathcal{G}}$ as
    \begin{equation}
        \label{eq:def_of_V_interpolation_s}
        V_{\Lambda,\mathcal{G}}(\boldsymbol\theta;\mathbf{s}) = \sum_{\substack{p\in P(\Lambda) \\ E(p)\cap \cE(\mathcal{G}) \not= \emptyset}} \sigma_p(\mathbf{s})  g({\sf d}\theta_p) + m \sum_{e\in E(\mathcal{G})} g(\theta_e)  + V_{{\sf s}}(\boldsymbol\theta)\, , 
    \end{equation}
    where $\sigma_p(\mathbf{s})$ is given by~\eqref{eq:def_of_sigma_p}. We remark that without loss of generality, we may assume that the edges $x,y\in E(\Lambda)$ from the statement of Theorem~\ref{thm:exponential_clustering_in_lattice_YMH} are not contained in faces from $P^\prime(\mathcal{G}_1)$, as a slight modification of the parameter $L$ will guarantee this is not the case. Hence, the interpolation does not effect the term $V_{\sf s}$ in~\eqref{eq:def_of_V_interpolation_s}. Claim~\ref{claim:fund_thm_of_calculus} applied with
    \begin{equation}
    \label{eq:def_of_Xi_with_interpolation_s}
        \Xi_{\mathcal{G},\eta}(\mathbf{s}) = \int_{\bR^{\cE(\mathcal{G})}}e^{-\beta V_{\Lambda,\mathcal{G}}(\boldsymbol{\theta};\mathbf{s})} \chi_{\mathcal{G}}(\boldsymbol{\theta}) \, {\rm d}\mu_{\mathcal{G},\eta,\mathbf{s}}^{\sf G}  \, ,
    \end{equation}
    gives that
    \begin{equation}
    \label{eq:Xi_after_fund_thm_of_calculus}
        \Xi_{\mathcal{G},\eta} = \Xi_{\mathcal{G},\eta}(\mathbf{1}) = \sum_{\Delta\subset P^\prime(\mathcal{G}_1)} \int_{[0,1]^\Delta} \partial^\Delta \Xi_{\mathcal{G},\eta}(\mathbf{s}_{\Delta}) \, {\rm d} \mathbf{s}_\Delta \, .
    \end{equation}
    With the notation above, $\Xi_\mathcal{G,\eta}(\mathbf{0})$ factors into contributions from the different $L$-blocks in $\mathcal{G}_1$, and then also the contribution from the field in $\mathcal{G}\setminus \mathcal{G}_1$. For technical reasons that will be clear later on, we define $\Xi_{\mathcal{G},\eta}(\mathbf{0},+)$ to be the corresponding expectation when we also decouple $L$-blocks from $\mathcal{G}\setminus \mathcal{G}_1$. More formally, recall that $P^\prime(\mathcal{G})$ is the set of all faces in $\mathcal{G}$ between its different $L$-blocks. We set 
    \begin{equation}
        \label{eq:def_of_gaussian_action_with_0_+}
        S_{\Lambda,\mathcal{G}}(\boldsymbol{\theta};\mathbf{0},+) = \frac{1}{2} \sum_{\substack{p\in P(\Lambda) \\ E(p)\cap \cE(\mathcal{G})\not= \emptyset}}  \widetilde\sigma_p \big({\sf d}\theta_p\big)^2 + \frac{m}{2}\sum_{e\in \cE(\mathcal{G})} (\theta_e)^2 \, , 
    \end{equation}
    with
    \[
    \widetilde \sigma_p = \begin{cases}
        0 & \text{if $p$ contains an edge from a face }{\sf F} \in P^\prime(\mathcal{G}) \, ,  \\ 1 & \text{otherwise.}
    \end{cases}
    \]
    Let $\mu_{\mathcal{G},\eta,\mathbf{0},+}^{\sf G}$ be the probability measure corresponding to the Gaussian action~\eqref{eq:def_of_gaussian_action_with_0_+}. Similarly, we set 
        \begin{equation*}
        V_{\Lambda,\cG}(\boldsymbol\theta;\mathbf{0},+) = \sum_{\substack{p\in P(\Lambda) \\ E(p)\cap \cE(\mathcal{G}) \not= \emptyset}} \widetilde \sigma_p \,   g({\sf d}\theta_p) + m \sum_{e\in \cE(\cG)} g(\theta_e)  + V_{{\sf s}}(\boldsymbol\theta)\, , 
    \end{equation*}
    and so
    \begin{equation}
    \label{eq:def_of_Xi_0_+}
        \Xi_{\mathcal{G},\eta}(\mathbf{0},+) = \int_{\bR^{\mathcal{G}}} e^{-\beta V_{\Lambda,\mathcal{G}}(\boldsymbol{\theta};\mathbf{0},+)} \, \chi_{\mathcal{G}}(\boldsymbol{\theta}) \, {\rm d}\mu_{\mathcal{G},\eta,\mathbf{0},+}^{\sf G} \, .
    \end{equation}
    As we shall see below (see Claim~\ref{claim:two_sided_inequality_for_ratio_of_xi}), the ratio of $\Xi_{\mathcal{G},\eta}(\mathbf{0},+)$ and $\Xi_{\mathcal{G},\eta}(\mathbf{0})$ is typically quite close to one. By setting
    \begin{equation}
    \label{eq:def_of_K_1}
        K_1(\Delta;\mathcal{G}) = \frac{1}{\Xi_{\mathcal{G},\eta}(\mathbf{0},+)} \int_{[0,1]^\Delta} \partial^\Delta \Xi_{\mathcal{G},\eta}(\mathbf{s}_{\Delta}) \, {\rm d} \mathbf{s}_\Delta \, ,
    \end{equation}
    we see from~\eqref{eq:Xi_after_fund_thm_of_calculus} that
    \begin{equation*}
         \Xi_{\mathcal{G},\eta} = \Xi_{\mathcal{G},\eta}(\mathbf{0},+) \sum_{\Delta\subset P^\prime(\mathcal{G}_1)} K_1(\Delta;\mathcal{G}) \, . 
    \end{equation*}    
    Plugging this expansion into~\eqref{eq:YMH_partition_function_second_reduction}, we get that
    \begin{align}
        \label{eq:YMH_partition_function_third_reduction}
        Z_{\Lambda}^{\sf YMH,s}  &=   Z_{\Lambda,f}^{\sf G} \cdot \Xi_{\Lambda}(\mathbf{0}) \cdot \sum_{\mathcal{B}\subset \Lambda^\prime } \int_{[-\pi,\pi]^{\cE(\mathcal{B})}} \rho_{\mathcal{B}}(\boldsymbol{\eta}) \notag \\
        &\hskip1.5in \cdot   \exp\Big\{ \sum_{\Ga\not=\emptyset} W_2(\Ga;\mathcal{G}) \Big\} \cdot \Big(\sum_{\Delta} K_1(\Delta;\mathcal{G}) \Big)\,  {\rm d}\boldsymbol{\eta}_{\mathcal{B}} \, ,
    \end{align}
    where
    \begin{equation}
        \label{eq:def_of_rho_B}
        \rho_{\mathcal{B}}(\boldsymbol{\eta}) = e^{-\beta V_{\Lambda}^{\mathcal{B}}(\boldsymbol{\eta})- \beta S_{\Lambda}^{\mathcal{B}}(\boldsymbol{\eta})} \cdot \zeta_{\mathcal{B}}(\boldsymbol{\eta}) \cdot \frac{Z_{\mathcal{G},\eta}^{\sf G}(\mathbf{0})}{Z_{\Lambda,f}^{\sf G}(\mathbf{0})} \cdot \frac{\Xi_{\mathcal{G},\eta}(\mathbf{0},+)}{\Xi_{\Lambda}(\mathbf{0})} \, .
    \end{equation}
    \subsection{Uniting the expansions: Proof of Proposition~\ref{proposition:expansion_for_YMH_partition_function_before_log}}
    Looking at~\eqref{eq:YMH_partition_function_third_reduction}, we can expand the exponential as
    \begin{align} \label{eq:expanding_the_exponential_for_W_2} \nonumber 
        \exp\Big( \sum_{\Ga\not=\emptyset} W_2(\Ga;\mathcal{G}) \Big) &= \sum_{n\ge 0} \frac{1}{n!} \bigg(\sum_{\Ga_1\not= \emptyset} \cdots \sum_{\Ga_n \not= \emptyset} \,  \prod_{j=1}^n W_2(\Ga_j;\mathcal{G})  \bigg) \\ & = \sum_{\mathbf{\Ga} \in \mathbf{T}} \frac{1}{|\mathbf{\Ga}|!}  \prod_{\Ga\in \mathbf{\Ga}} W_2(\Ga;\mathcal{G}) \, ,
    \end{align}
    where $\mathbf{T}$ is the set of all ordered tuples, namely
    \begin{equation*}
        \mathbf{T} = \big\{ \boldsymbol{\Ga} = (\Ga_1,\ldots,\Ga_n) \, : \, \Ga_i\subset P^\prime(\cG_1) \text{ is non-empty and connected} \big\} \, .
    \end{equation*}
    Plugging~\eqref{eq:expanding_the_exponential_for_W_2} into~\eqref{eq:YMH_partition_function_third_reduction} we see that 
    \begin{align}
        \label{eq:YMH_partition_function_forth_reduction}
        Z_{\Lambda}^{\sf YMH,s}  &=  Z_{\Lambda,f}^{\sf G} \cdot \Xi_{\Lambda}(\mathbf{0}) \sum_{\mathcal{B}\subset \Lambda^\prime } \, \sum_{\mathbf{\Ga}\in \mathbf{T}} \,  \sum_{\Delta\subset P^(\cG_1)} \frac{1}{|\mathbf{\Ga}|!} \int_{[-\pi,\pi]^{\cE(\mathcal{B})}} \rho_{\mathcal{B}}(\boldsymbol{\eta}) \notag \\
        &\hskip2in \cdot   \prod_{\Ga\in \mathbf{\Ga}} W_2(\Ga;\mathcal{G}) \cdot K_1(\Delta;\mathcal{G}) \,  {\rm d}\boldsymbol{\eta}_{\mathcal{B}} \, .
    \end{align}
    To complete the proof of Proposition~\ref{proposition:expansion_for_YMH_partition_function_before_log}, we need one more definition.
    \begin{definition}
    \label{def:polymer}
    Given any connected ${\sf P} \subset \La^\prime$ we say that $(\mathcal{B},\mathbf{\Ga},\Delta)$ is a \emph{polymer} on ${\sf P}$ if
    \[
    {\sf P} = \cB \cup \Big(\bigcup_{\Ga\in \mathbf{\Ga}} X(\Ga)\Big) \cup X(\Delta) \, . 
    \]
    Here $X(\cdot)$ is defined via equation~\eqref{xsdef}. Furthermore, for ${\sf P}\subset \La^\prime$ we set 
    \begin{equation}
        \label{eq:def_of_w}
        {\sf w}({\sf P}) = \sum_{(\mathcal{B},\mathbf{\Ga},\Delta) \ \text{is a polymer on } {\sf P}} \frac{1}{|\mathbf{\Ga}|!} \int_{[-\pi,\pi]^{\cE(\mathcal{B})}} \rho_{\mathcal{B}}(\boldsymbol{\eta}) \cdot   \prod_{\Ga\in \mathbf{\Ga}} W_2(\Ga;\mathcal{G}) \cdot K_1(\Delta;\mathcal{G}) \,  {\rm d}\boldsymbol{\eta}_{\mathcal{B}}  
    \end{equation}
    if ${\sf P}$ is connected, and ${\sf w}(\sf P) = 0$ if ${\sf P}$ is not connected. 
    \end{definition}
    \noindent
    Note that not every configuration in the sum~\eqref{eq:YMH_partition_function_forth_reduction} is a polymer on some ${\sf P}$, as the resulting union may not be connected. We say that $\Delta \subset P^\prime(\mathcal{G}_1)$ is $\mathcal{G}_1$-connected if either $\Delta$ is connected or the connected components of $\Delta$ can be connected by a single path from $\mathcal{B}_1$. It is apparent from~\eqref{eq:def_of_K_1} that $K_1$ factors across $\mathcal{G}_1$-connected components; so we can write
    \begin{equation}\label{eq:K_1_factor_into_connected_components}
    K_1(\Delta;\mathcal{G}) = \prod_{\substack{\delta\subset \Delta \\ \delta \ \text{is a maximal  $\mathcal{G}_1$-connected component}}} K_1(\delta;\mathcal{G}) \, .
    \end{equation}
    Since the function $\rho_\cB$ factors across connected components of the bad $L$-blocks $\cB$ (and so does the integral in~\eqref{eq:YMH_partition_function_forth_reduction}), we can combine Claim~\ref{claim:condition_for_which_W_2_non_zero} and the observation~\eqref{eq:K_1_factor_into_connected_components} and see that any configuration $(\cB,\boldsymbol{\Ga},\Delta)$ from the sum~\eqref{eq:YMH_partition_function_forth_reduction} can be uniquely decomposed into polymers on
    \[
    \big\{ \mathbf{P} =  ({\sf P}_1,\ldots,{\sf P}_n) \, : \, {\sf P}_i \subset \La^\prime \  \text{are connected and no two elements touch} \big\} \, .
    \]
     With that, Definition~\ref{def:polymer} together with Fubini's theorem yields that
    \begin{equation*}
        Z_{\Lambda}^{\sf YMH,s}  = Z_{\Lambda,f}^{\sf G} \cdot \Xi_{\Lambda}(\mathbf{0}) \cdot\sum_{\mathbf{P}} \prod_{{\sf P}\in \mathbf{P}} {\sf w}({\sf P}) \, .
    \end{equation*}
    This is exactly the equality~\eqref{eq:polymer_expansion_for_YMH_partition_function_before_log}, and the proof of Proposition~\ref{proposition:expansion_for_YMH_partition_function_before_log} is complete. 
    \qed
    
    \section{Convergence of the cluster expansion}
    \label{sec:convergence_of_cluster_expansion}
    
    To prove our main result Theorem~\ref{thm:exponential_clustering_in_lattice_YMH}, we wish to take the logarithm on both sides of~\eqref{eq:polymer_expansion_for_YMH_partition_function_before_log} and then differentiate with respect to the `sources' to obtain the desired correlation; see the formula~\eqref{eq:diffrentiating_the_log_partition_function}. The procedure of taking the logarithm of a polymer expansion is fairly standard, and the resulting expansion is commonly known as the \emph{cluster expansion}. In what follows, we will mostly be following the terminology from the book~\cite[Chapter~5]{Friedli-Velenik_book}, although there are other excellent sources to learn about these expansions, see for instance~\cite{Cammarota, Kotecky-Preiss} or the book chapters~\cite[Chapter~3]{Seiler-book}  and~\cite[Chapter~V]{Simon-book}. 

    \subsection{The cluster expansion}
    Recall $\delta({\sf P}_1,{\sf P}_2) = 0$ if ${\sf P}_1,{\sf P}_2\subset \La^\prime$ touch while $\delta({\sf P}_1,{\sf P}_2)= 1$ otherwise, see~\eqref{eq:def_of_delta_interaction}. Following~\cite{Friedli-Velenik_book}, we say that two polymers ${\sf P}_1,{\sf P}_2\subset \La^\prime$ are \emph{compatible} if $\delta({\sf P}_1,{\sf P}_2) = 1$, and \emph{incompatible} if $\delta({\sf P}_1,{\sf P}_2) = 0$. We say that a collection $({\sf P}_1,\ldots,{\sf P}_n)$ is \emph{decomposable} if it is possible to write it as a disjoint union of two non-empty sets, in such a way that each ${\sf P}_i$ from one set is compatible with each ${\sf P}_j$ from the other set. An unordered, non-decomposable collection $\mathbf{X} = ({\sf P}_1,\ldots,{\sf P}_n)$ is called a \emph{cluster}. Given such cluster $\mathbf{X} = ({\sf P}_1,\ldots,{\sf P}_n)$ we set
    \begin{align}\label{eq:def_of_Psi}
        \Psi(\mathbf{X}) &=  \Big(\prod_{\substack{{\sf P}\subset \La^\prime \\ {\sf P} \text{ connected}}} \frac{1}{n_{\mathbf{X}}({\sf P})!} \Big) \cdot \Big( \prod_{{\sf P}\in \mathbf{X}} {\sf w}({\sf P}) \Big) \notag \\ 
        &\qquad \qquad \cdot \bigg(\sum_{G\in \{\text{connected graphs on } \mathbf{X}\}} \prod_{({\sf P}_i,{\sf P}_j)\in E(G)} \big(\delta({\sf P}_i,{\sf P}_j) - 1\big)\bigg) 
    \end{align}
    where $n_{\mathbf{X}}({\sf P})$ is the number of times the polymer ${\sf P}$ appears in the cluster $\mathbf{X}$. The combinatorial factor that appears in the bottom line of~\eqref{eq:def_of_Psi} is known as the \emph{Ursell function} of $\mathbf{X}$. For $X\subset \La^\prime$ we set 
    \begin{equation} \label{eq:def_of_psi_overline}
        \overline{\Psi}(X) = \sum_{\substack{\mathbf{X}= ({\sf P}_1,\ldots,{\sf P}_n) \\ \cup_i {\sf P}_i = X}} \Psi(\mathbf{X}) \, .
    \end{equation}
    Formally, the expansion from Proposition~\ref{proposition:expansion_for_YMH_partition_function_before_log} combined with~\cite[Proposition~5.3]{Friedli-Velenik_book} implies that
    \begin{align}
    \label{eq:cluster_expansion_for_log_YMH_partition_function} \nonumber
        \log Z_{\Lambda}^{{\sf YMH,s}} & = \log \big(Z_{\Lambda,f}^{{\sf G}}\cdot \Xi_{\Lambda} (\mathbf{0})\big) + \sum_{\mathbf{X}} \Psi(\mathbf{X}) \\ & = \log \big(Z_{\Lambda,f}^{{\sf G}}\cdot \Xi_{\Lambda} (\mathbf{0})\big) + \sum_{X\subset \La^\prime} \overline \Psi(X) \, .
    \end{align}
    The above equality is the desired \emph{cluster expansion} for $\log Z_{\Lambda}^{{\sf YMH,s}}$. We note that even though $\La$ (and hence $\La^\prime$) is finite, the sum on the right-hand side of~\eqref{eq:def_of_psi_overline} is in fact infinite, since the same polymer can appear arbitrary many times in a given cluster $\mathbf{X}$. Therefore, in what follows we will need to justify absolute convergence of the above sum, which will be our main technical challenge. For now, we state this as a claim for latter references.
    \begin{claim}
        \label{claim:cluster_expansion_converge}
        The series in~\eqref{eq:cluster_expansion_for_log_YMH_partition_function} converges absolutely.
    \end{claim}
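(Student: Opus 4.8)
The plan is to verify the hypotheses of the standard Kotecký–Preiss / Friedli–Velenik convergence criterion for abstract polymer models (see~\cite[Theorem~5.4]{Friedli-Velenik_book}), applied to the polymer system on connected subsets ${\sf P}\subset\La^\prime$ with the hard-core interaction $\delta$ and weights ${\sf w}({\sf P})$ from Definition~\ref{def:polymer}. Concretely, it suffices to exhibit a function $a:\{\text{connected }{\sf P}\subset\La^\prime\}\to\bR_{\ge 0}$, which I would take of the form $a({\sf P}) = \kappa\,|{\sf P}|$ for a suitable constant $\kappa>0$ (here $|{\sf P}|$ is the number of $L$-blocks in ${\sf P}$), such that for every connected polymer ${\sf P}^\ast$,
\[
\sum_{\substack{{\sf P}\subset\La^\prime \text{ connected}\\ {\sf P}\ \text{incompatible with}\ {\sf P}^\ast}} |{\sf w}({\sf P})|\, e^{a({\sf P})} \le a({\sf P}^\ast)\,.
\]
Since incompatibility means ${\sf P}$ touches ${\sf P}^\ast$, and the number of connected subsets of $L\bZ^d$ of size $k$ that touch a fixed set of size $|{\sf P}^\ast|$ is at most $|{\sf P}^\ast|\, C_d^{k}$ for some dimensional constant $C_d$, the criterion will follow once I establish a bound of the form
\[
|{\sf w}({\sf P})| \le \big(e^{-\tau}\big)^{|{\sf P}|}
\]
for a decay rate $\tau=\tau(\beta,L)$ that can be made as large as we like by first taking $L$ large and then $\beta$ large; any $\tau > \kappa + \log C_d$ closes the estimate with room to spare, and absolute convergence of~\eqref{eq:cluster_expansion_for_log_YMH_partition_function} then follows because $|\overline\Psi(X)|$ is controlled by the same machinery.

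The heart of the matter is therefore the weight bound $|{\sf w}({\sf P})|\le e^{-\tau|{\sf P}|}$, and this is where essentially all the work lies. Unwinding Definition~\ref{def:polymer}, a polymer on ${\sf P}$ is a triple $(\cB,\boldsymbol\Ga,\Delta)$ whose supports cover ${\sf P}$, weighted by $\tfrac{1}{|\boldsymbol\Ga|!}\int \rho_\cB(\boldsymbol\eta)\prod_{\Ga\in\boldsymbol\Ga}W_2(\Ga;\cG)\,K_1(\Delta;\cG)\,{\rm d}\boldsymbol\eta_\cB$. I would bound each of the three mechanisms separately, with decay proportional to the number of blocks it occupies:
\begin{itemize}
\item[(i)] \emph{Bad blocks.} For each bad $L$-block $v\in\cB$, there is a large field element ($|\theta_e|\ge T_\beta$) somewhere in $\cE(Q(v))$; integrating the Gaussian-type density $\rho_\cB$ against the constraint $\zeta_\cB$ produces a factor exponentially small in $\beta T_\beta^2 = \log^{2d+4}\beta$ per bad block, which beats the entropy of choosing $\cB$. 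The ratio $Z^{\sf G}_{\cG,\eta}(\mathbf 0)/Z^{\sf G}_{\La,f}(\mathbf 0)$ and $\Xi_{\cG,\eta}(\mathbf 0,+)/\Xi_\La(\mathbf 0)$ need to be shown to be harmless (bounded by $e^{O(|\cB|)}$, absorbed into $\tau$); this uses that the Proca field is massive (Lemma~\ref{lemma:proca_field_is_massive}) so decoupling along faces changes the free energy by $O(1)$ per face.
\item[(ii)] \emph{Gaussian interpolation weights $W_2(\Ga;\cG)$.} Here one differentiates $\log Z^{\sf G}_{\cG,\eta}(\mathbf s_\Ga)$ in the face variables $s_{\sf F}$. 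Each $\partial/\partial s_{\sf F}$ brings down a factor of $\beta\,\bE^{\sf G}[(\dd\theta_p)^2\,;\,\cdots]$, i.e.\ a truncated correlation of the massive Gaussian field, and the key point is that $\Ga$ must be connected and touch $\cB_1$, so the tree-graph / Battle–Federbush-type estimate for derivatives of $\log Z^{\sf G}$ gives a product of Gaussian covariances along a connected path. Because the Proca covariance decays like $e^{-c_d m\,\dist}$ and a face in $P'(\cG_1)$ separates blocks at distance $\ge L$ from $\cB$, each such weight is bounded by $(e^{-c_d m L})^{|X(\Ga)|}$ up to combinatorial factors—this is exactly why $L$ must be large.
\item[(iii)] \emph{Non-Gaussian weights $K_1(\Delta;\cG)$.} These come from differentiating $\Xi_{\cG,\eta}(\mathbf s_\Delta)$, which mixes the $s$-derivatives hitting $V_{\La,\cG}$ (each producing a factor $\beta g(\dd\theta_p)$, small like $\beta T_\beta^4 \lesssim \beta^{-1}\log^{4d+8}\beta$ on the support of $\chi_\cG$) and the $s$-derivatives hitting the Gaussian measure $\mu^{\sf G}_{\cG,\eta,\mathbf s}$ (again giving Proca covariances). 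The Gevrey bound (3) on $\chi$ controls the derivatives of $\chi_\cG$ with factorials that the $1/|\boldsymbol\Ga|!$ and explicit small parameters absorb. This part is the most technically involved because one must simultaneously integrate out the Gaussian field, keep the $\chi_\cG$ cutoff, and get decay per block.
\end{itemize}

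Combining (i)--(iii): a polymer on ${\sf P}$ decomposes ${\sf P}$ into the regions occupied by $\cB$, by $\bigcup_\Ga X(\Ga)$, and by $X(\Delta)$; each block of ${\sf P}$ is charged an exponentially small factor by whichever mechanism reaches it, and summing over the (at most exponentially many in $|{\sf P}|$) ways to realize a given ${\sf P}$ as such a triple leaves $|{\sf w}({\sf P})|\le e^{-\tau|{\sf P}|}$ with $\tau\to\infty$ as $L\to\infty$ and then $\beta\to\infty$. Feeding this into the Kotecký–Preiss criterion proves Claim~\ref{claim:cluster_expansion_converge}. The main obstacle I anticipate is step (ii)/(iii)'s derivative estimates: one needs a clean bound on $\partial^\Ga\log Z^{\sf G}$ and $\partial^\Delta\Xi$ that produces a \emph{connected} product of rapidly decaying Proca covariances (so that the connectedness of $\Ga$, $\Delta$ translates into genuine spatial decay over $|{\sf P}|$ blocks), while controlling the proliferation of terms from the Leibniz rule with the factorial weights—this is the standard but delicate core of a Glimm–Jaffe–Spencer expansion, and keeping the bookkeeping uniform in $\La_n$ is what makes the constants $C,c$ in Theorem~\ref{thm:exponential_clustering_in_lattice_YMH} volume-independent.
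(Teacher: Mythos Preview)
Your proposal is correct and follows essentially the same route as the paper: reduce to the Kotecký--Preiss/Friedli--Velenik criterion and verify it via a per-block weight bound on ${\sf w}({\sf P})$, obtained by controlling the three mechanisms (bad blocks via the large-field penalty, $W_2$ via the massive Proca covariance decay, and $K_1$ via the smallness of $\beta g$ on the support of $\chi_\cG$). The paper organizes this slightly differently---it packages the weight bound as the single statement $\lim_{\beta\to\infty}\|{\sf w}\|_r=0$ (Theorem~\ref{theorem:condition_ensuring_convergence}), reduces Claim~\ref{claim:cluster_expansion_converge} to it in two lines, and then proves that theorem by exactly your steps (i)--(iii), which appear as Lemmas~\ref{lemma:large_field_values_are_exponentially_rare}, \ref{lemma:GJS_weights_are_small}, and~\ref{lemma:bound_for_the_non_gaussian_weight}; one minor point is that the ratios in (i) are bounded by $e^{C_L|\cB_1|}$ rather than $e^{O(|\cB|)}$, and the $r_\beta$-thickening is what makes this absorbable by the large-field cost $e^{-c_L\beta T_\beta^2|\cB|}$.
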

    \subsection{A condition ensuring convergence}
    To check the convergence of~\eqref{eq:def_of_psi_overline} (and, in turn, the convergence of~\eqref{eq:cluster_expansion_for_log_YMH_partition_function}) it will be convenient to introduce a suitable norm, as done in~\cite[Section~9]{Balaban-Imbrie-Jaffe-Brydges} and~\cite[Chapter~V]{Simon-book}.
    \begin{definition}
        \label{def:r_norm}
        Let $f:\{{\sf P} : {\sf P}\subset L\bZ^d \} \to [0,\infty)$. For $r\ge 0$ we define its $r$-norm as 
        \[
        \|f \|_r = \sup_{x\in L\bZ^d} \sum_{\substack{{\sf P} \ \text{connected} \\ {\sf P} \  \text{touch } x }} f({\sf P}) \,  e^{r|{\sf P}|} \, .
        \]
        Similarly, given a function $f:\{ \Ga: \Ga\subset P^\prime(\bZ^d)\} \to [0,\infty)$ we define its $r$-norm
        \[
        \|f\|_r = \sup_{x\in L\bZ^d} \sum_{\substack{\Ga\ \text{connected} \\ \Ga \  \text{touch } x }} f(\Ga) \,  e^{r|X(\Ga)|} \, .
        \]
    \end{definition}
    \noindent 
    The $r$-norm is particularly convenient when checking convergence of cluster expansions. Proving the next theorem  which, in particular, ensures the convergence of the cluster expansion, will be our main objective in the sections ahead. 
    \begin{theorem}
        \label{theorem:condition_ensuring_convergence}
        Let ${\sf w}$ be given by~\eqref{eq:def_of_w}. Then for all $r\ge 0$ and for all $L\ge L_0(r,m,d)$ large enough, we have
        \[
        \lim_{\beta\to\infty} \| {\sf w} \|_r = 0 \, ,
        \]
        where $\beta$ is the inverse coupling constant for the Yang--Mills--Higgs measure~\eqref{eq:intro-def_of_YMH_measure}. 
    \end{theorem}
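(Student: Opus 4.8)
The plan is to bound $\|{\sf w}\|_r$ by controlling separately the three kinds of objects that enter ${\sf w}({\sf P})$ through Definition~\ref{def:polymer} and~\eqref{eq:def_of_w} — the bad‑block integral $\int\rho_{\cB}\,{\rm d}\boldsymbol\eta_{\cB}$, the Gaussian‑interpolation weights $W_2(\Ga;\cG)$, and the non‑Gaussian weights $K_1(\Delta;\cG)$ — showing that each carries a factor which is small (the bad‑block and non‑Gaussian ones as $\beta\to\infty$, the $W_2$'s after summation once $L$ is large), and then reassembling the pieces into a connected polymer at an exponential combinatorial cost in $|{\sf P}|$. Two observations organize the bookkeeping: by~\eqref{eq:def_of_W_2} one has $W_2(\Ga;\Lambda^\prime)\equiv 0$, so a non‑empty $\boldsymbol\Ga$ forces $\cB\neq\emptyset$; and $\rho_{\cB}$, $W_2$, and $K_1$ all factor over connected components of their supports (Claim~\ref{claim:condition_for_which_W_2_non_zero} and~\eqref{eq:K_1_factor_into_connected_components}), so it suffices to estimate one connected piece at a time.

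\textbf{Step 1 (elementary estimates).} I would establish, uniformly in $\La_n$ and in the sources $t_x,t_y$, the following. (i) For connected $\cB\subset\La^\prime$, $\int_{[-\pi,\pi]^{\cE(\cB)}}|\rho_{\cB}(\boldsymbol\eta)|\,{\rm d}\boldsymbol\eta_{\cB}\le(\eps_1(\beta))^{|\cB|}$ with $\eps_1(\beta)\to 0$ faster than any inverse power of $\beta$: on the support of $\zeta_{\cB}$ (see~\eqref{eq:def_of_zeta_B} and Definition~\ref{def:good_and_bad_blocks}) each bad block carries an edge with $|\theta_e|\ge T_\beta$, so $e^{-\beta(V^{\cB}_{\La}+S^{\cB}_{\La})}=e^{-\beta\mathcal H^{\cB}_{\La}}e^{-\beta V^{\cB}_{\sf s}}\le(e^{-cm\beta T_\beta^2})^{|\cB|}=(e^{-cm\log^{2d+4}\beta})^{|\cB|}$, while the partition‑function ratios in~\eqref{eq:def_of_rho_B} are bounded by $C_L^{|\cB_1|}$ (using massiveness of the Proca field, Lemma~\ref{lemma:proca_field_is_massive}, to kill the boundary effect of $\cB$ on $\cG$, and Claim~\ref{claim:two_sided_inequality_for_ratio_of_xi} for $\Xi_{\cG,\eta}(\mathbf 0,+)/\Xi_{\La}(\mathbf 0)$), and since $|\cB_1|\le Cr_\beta^d|\cB|$ with $r_\beta^d=\log^{2d}\beta=o(\beta T_\beta^2)$ the fattening only costs a sub‑leading polylog in the exponent. (ii) For connected $\Ga\subset P^\prime(\cG_1)$ touching $\cB_1$, $\sup_{\boldsymbol\eta}|W_2(\Ga;\cG)|\le(C_L e^{-c_d m L})^{|\Ga|}$: by Claim~\ref{claim:condition_for_which_W_2_non_zero} it is enough to bound $\big|\int_{[0,1]^\Ga}\partial^\Ga\log Z^{\sf G}_{\cG^\prime,\eta}(\mathbf s_\Ga)\,{\rm d}s_\Ga\big|$ for connected $\Ga$, and differentiating $\log Z^{\sf G}$ produces, via a Brydges--Kennedy tree‑graph representation, truncated moments of the $({\sf d}\theta_p)^2$ which by massiveness — uniformly over boundary data and over the interpolating actions $S_{\La,\cG}(\,\cdot\,;\mathbf s)$ — decay exponentially in the pairwise distances of the faces of $\Ga$, and adjacent faces of a connected $\Ga$ lie at distance $\ge L$. (iii) For connected $\Delta\subset P^\prime(\cG_1)$, $|K_1(\Delta;\cG)|\le(\eps_3(\beta))^{|\Delta|}$ with $\eps_3(\beta)\to 0$: differentiating $\Xi_{\cG,\eta}(\mathbf s)$ from~\eqref{eq:def_of_Xi_with_interpolation_s} in the variables $s_{\sf F}$, ${\sf F}\in\Delta$, and normalising by $\Xi_{\cG,\eta}(\mathbf 0,+)$, one extracts from $g({\sf d}\theta_p)=O(({\sf d}\theta_p)^4)$ on good blocks and from the super‑small contribution of the cutoff $\chi_{\cG}$ a local quantity of size $O\big(L^{d-1}\,\mathrm{polylog}(\beta)/\beta\big)$ per interpolated face; this is the content of Section~\ref{sec:dealing_with_non_gaussian_correction}, and is where property~(3) of $\chi$ enters.

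\textbf{Step 2 (reassembly and summation).} By the first observation above, any polymer on ${\sf P}$ with ${\sf w}({\sf P})\neq 0$ either has $\cB\neq\emptyset$ or is of pure‑$\Delta$ type (for which $\rho_\emptyset=1$). In both cases, multiplying (i)--(iii) over the pieces of the polymer and splitting $e^{r|{\sf P}|}$ via $|{\sf P}|\le|\cB|+\sum_j|X(\Ga_j)|+|X(\Delta)|$ yields
\[
e^{r|{\sf P}|}\,|{\sf w}({\sf P})|\ \le\ \eps(\beta)\!\!\sum_{(\cB,\boldsymbol\Ga,\Delta)\ \text{on}\ {\sf P}}\frac{1}{|\boldsymbol\Ga|!}\prod(\text{elementary factors}),\qquad \eps(\beta):=\max\{\eps_1(\beta),\eps_3(\beta)\}\to 0,
\]
where each elementary factor is one of $e^{r}\eps_1(\beta)$, $C_L e^{r}e^{-c_d m L}$, $e^{r}\eps_3(\beta)$, each $<1$ once $\beta$ is large (first and third) and $L\ge L_0(r,m,d)$ (second). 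Summing $e^{r|{\sf P}|}|{\sf w}({\sf P})|$ over connected ${\sf P}\ni x$ then reduces to a geometric‑type sum over the sizes of the elementary pieces: the number of connected subsets of $L\bZ^d$ (or of faces) of a given size through $x$ grows only exponentially, the number of decompositions of ${\sf P}$ is at most $C^{|{\sf P}|}$, and the $1/|\boldsymbol\Ga|!$ absorbs the ordering of the tuple $\boldsymbol\Ga$. Choosing $L\ge L_0(r,m,d)$ so that the resulting common ratio stays below $1$ makes the sum converge to a bound $\mathrm{const}(r,L,m,d)$ uniform in $\beta$, whence $\|{\sf w}\|_r\le\eps(\beta)\cdot\mathrm{const}(r,L,m,d)\to 0$.

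\textbf{Main obstacle.} The heart of the matter is the pair of expansion estimates (ii) and (iii). Estimate (ii) is the Glimm--Jaffe--Spencer step: it needs a clean tree‑graph expansion of $\partial^\Ga\log Z^{\sf G}$ together with a quantitative form of exponential decay for Proca correlations that is uniform in the inhomogeneous couplings $\sigma_p(\mathbf s)$ and in arbitrary Dirichlet boundary data, so Lemma~\ref{lemma:proca_field_is_massive} will have to be upgraded to this uniform form. Estimate (iii) requires showing that the extensive factors in $\Xi_{\cG,\eta}(\mathbf s)$ cancel upon normalising by $\Xi_{\cG,\eta}(\mathbf 0,+)$ — equivalently that $\Xi_{\cG,\eta}(\mathbf 0,+)/\Xi_{\cG,\eta}(\mathbf 0)$ is close to $1$ (Claim~\ref{claim:two_sided_inequality_for_ratio_of_xi}) — so that each interpolated face contributes only locally, and it must accommodate the smooth cutoff, which is why the Gevrey bound $\sup_t|\chi^{(k)}(t)|\le C(k!)^2$ was imposed rather than a generic $C^\infty$ estimate. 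By comparison, the combinatorial step of gluing the three expansions into one compatible polymer system is routine, but the bookkeeping — the $\cB$‑versus‑$\cB_1$ enlargement, and keeping the various sizes $|X(\,\cdot\,)|$ comparable to $|{\sf P}|$ — still has to be carried out with care.
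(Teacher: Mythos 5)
Your plan follows essentially the same route as the paper: the same three-way decomposition of ${\sf w}$ into the bad-block weight, the Glimm--Jaffe--Spencer weights $W_2$, and the non-Gaussian weights $K_1$, with the same per-piece estimates (superpolynomially small bad-block integral, a per-face factor $C_Le^{-cmL}$ for $W_2$, smallness of $K_1$ as $\beta\to\infty$) and the same entropy/reassembly step via exponential counting of connected sets --- exactly the reduction the paper carries out through Lemmas~\ref{lemma:large_field_values_are_exponentially_rare},~\ref{lemma:GJS_weights_are_small},~\ref{lemma:bound_for_the_non_gaussian_weight} and Claims~\ref{claim:bound_on_the_number_of_connected_sets},~\ref{claim:bound_on_function_on_collection_of_subsets}. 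The one caveat is that your estimate (iii) is stated too strongly: for $\Delta$ touching $\cB_1$ the collar blocks do not cancel against the normalization, so $K_1$ only admits a bound carrying an extra $e^{C_L|\cB_1|}$ (as in Lemma~\ref{lemma:bound_for_the_non_gaussian_weight}; likewise Claim~\ref{claim:two_sided_inequality_for_ratio_of_xi} gives two-sided bounds $e^{\pm C_L|\cB_1|}$, not closeness to $1$) --- but this is harmless, since such $\Delta$ force $\cB\neq\emptyset$ and the same absorption of $C_L^{|\cB_1|}$ into $\eps_1(\beta)^{|\cB|}$ that you use in (i) takes care of it.
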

    The proof of Theorem~\ref{theorem:condition_ensuring_convergence} is given in Section~\ref{sec:breakdown_of_proof_condition_ensuring_convergence} below. Assuming Theorem~\ref{theorem:condition_ensuring_convergence} for the moment, it is not hard to check a suitable condition that ensures convergence of the cluster expansion, given that $\beta$ is sufficiently large.
    \begin{proof}[Proof of Claim~\ref{claim:cluster_expansion_converge}]
        Following~\cite[Theorem~5.4]{Friedli-Velenik_book}, the absolute convergence of~\eqref{eq:cluster_expansion_for_log_YMH_partition_function} would follow once we show that for each ${\sf P}\subset \La^\prime$ we have
        \begin{equation}
            \label{eq:condition_ensuring_convergence}
            \sum_{\substack{\widetilde {\sf P}\subset \La^\prime \ \text{connected} \\ {\sf P},\widetilde{{\sf P}} \ \text{touch}}} |{\sf w}(\widetilde {\sf P})| e^{|\widetilde{{\sf P}}|} \le |{\sf P}| \, .
        \end{equation}
        In view of Definition~\ref{def:r_norm}, we have
        \[
        \sum_{\substack{\widetilde {\sf P}\subset \La^\prime \ \text{connected} \\ {\sf P},\widetilde{{\sf P}} \ \text{touch}}} {\sf w}(\widetilde {\sf P}) \, e^{|\widetilde{{\sf P}}|} \le |{\sf P}| \cdot \sup_{x\in {\sf P}}  \sum_{\substack{{\sf P}\ \text{connected} \\ {\sf P} \  \text{touch } x }} {\sf w}({\sf P}) \,  e^{|{\sf P}|}  \le |{\sf P}| \cdot  \| {\sf w} \|_1\, .
        \]
        By Theorem~\ref{theorem:condition_ensuring_convergence}, for all $\beta$ large enough we have $\| {\sf w} \|_1\le 1$. For those values of $\beta$, condition~\eqref{eq:condition_ensuring_convergence} holds and the absolute convergence of the cluster expansion~\eqref{eq:cluster_expansion_for_log_YMH_partition_function} follows, as desired. 
    \end{proof}
    \subsection{Proof of the main result}
    In what follows, we would like to translate bounds on $\|{\sf w} \|_r$ (available to us through Theorem~\ref{theorem:condition_ensuring_convergence}) into bounds on the cluster expansion. This is made possible by the next claim. 
     \begin{claim}[{\cite[Appendix~A.2]{Balaban-Imbrie-Jaffe-Brydges}}]
        \label{claim:bound_of_norm_of_psi_by_norm_of_w}
        For $r\ge 0$, ${\sf w}$ given by~\eqref{eq:def_of_w} and $\overline \Psi$ given by~\eqref{eq:def_of_psi_overline}, we have
        \[
        \| \overline{\Psi} \|_r \le \sum_{n\ge 1}  \big(2 \| {\sf w} \|_{r+2}\big)^n \, .
        \]
    \end{claim}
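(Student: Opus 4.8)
The plan is to reduce the claim to a standard estimate in the theory of cluster expansions, bounding the weighted sum defining $\overline\Psi$ term by term. Fix $r \ge 0$ and a vertex $x \in L\bZ^d$. By the definition of $\|\overline\Psi\|_r$ we must control $\sum_{X \ni x,\ X \text{ connected}} \overline\Psi(X)\, e^{r|X|}$, where the sum is over connected $X \subset L\bZ^d$ touching $x$. Expanding $\overline\Psi(X)$ via \eqref{eq:def_of_psi_overline} into a sum over clusters $\mathbf X = ({\sf P}_1,\dots,{\sf P}_n)$ with $\cup_i {\sf P}_i = X$, and using the classical Penrose / tree-graph bound on the Ursell function, one obtains
\[
|\overline\Psi(X)| \le \sum_{\substack{\mathbf X = ({\sf P}_1,\dots,{\sf P}_n)\\ \cup_i {\sf P}_i = X}} \frac{1}{n!} \prod_{j=1}^n |{\sf w}({\sf P}_j)| \cdot \#\{\text{labelled trees on } \{1,\dots,n\} \text{ compatible with } \mathbf X\}.
\]
The idea is then to organize this sum as a sum over (abstract, rooted) trees whose vertices carry polymers, rooted at a polymer touching $x$; each edge of the tree corresponds to an incompatibility, i.e.\ two polymers that touch. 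Summing over the polymer attached to the root, and then recursively over the polymers hanging off each already-placed polymer, the factor $e^{r|X|} = \prod_j e^{r|{\sf P}_j|}$ distributes over the tree vertices.

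Concretely, I would set up the recursion as follows. Let $a = \|{\sf w}\|_{r+2}$. When we attach a new polymer ${\sf P}'$ to an already-placed polymer ${\sf P}$, the constraint is that ${\sf P}'$ touches ${\sf P}$, i.e.\ touches one of its $\le |{\sf P}|$ blocks (or a block adjacent to one of them — a bounded-degree neighborhood, absorbing a $2d$-type factor); by Definition~\ref{def:r_norm}, $\sum_{{\sf P}' \text{ touches a given block}} |{\sf w}({\sf P}')| e^{(r+2)|{\sf P}'|} \le a$, and the surplus $e^{2|{\sf P}'|} \ge e^{|{\sf P}'|}\cdot e^{|{\sf P}'|} \ge e^{|{\sf P}'|}$ is used: one factor $e^{|{\sf P}'|} \ge e$ pays for the branching (the number of children of ${\sf P}'$ in the tree, via $\sum_k (\text{children})^k/k! $-type bookkeeping, so that the tree sum converges as a geometric-type series), and the remaining factor absorbs the $|{\sf P}|$ choices of which block of the parent to touch together with the bounded local degree. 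Carrying this recursion from the root outward, the total weight of all trees rooted at a polymer touching $x$ is dominated by $\sum_{n \ge 1} (2a)^n$, which is exactly the asserted bound $\sum_{n\ge 1}(2\|{\sf w}\|_{r+2})^n$.

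The main obstacle — and the only genuinely delicate point — is the bookkeeping in the tree recursion: one must verify that the two ``extra'' exponential factors built into the gap between the $r$-norm and the $(r+2)$-norm are exactly enough to simultaneously (i) pay for the combinatorial cost of summing over which block of the parent polymer a child attaches to, weighted against the polymer's size $|{\sf P}|$, and (ii) pay for the branching factor in the tree, so that the generating-function/geometric estimate closes with constant $2$ rather than something larger. This is precisely the content of \cite[Appendix~A.2]{Balaban-Imbrie-Jaffe-Brydges}, and since the statement is quoted from there, I would either reproduce that argument or, more economically, observe that our polymers ${\sf P} \subset L\bZ^d$ and the touching relation $\delta$ fit verbatim into the abstract polymer framework of that appendix (bounded local geometry on $L\bZ^d$, hard-core interaction), so the cited lemma applies directly with $|{\sf P}|$ playing the role of the polymer's ``size''. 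The remaining steps — the Penrose tree-graph inequality for the Ursell function and the distribution of $e^{r|X|}$ over the tree — are entirely standard (see \cite[Chapter~5]{Friedli-Velenik_book}).
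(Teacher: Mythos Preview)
Your proposal is correct in spirit and, in fact, goes further than the paper does: the paper does not prove this claim at all, but simply imports it from \cite[Appendix~A.2]{Balaban-Imbrie-Jaffe-Brydges} (with a remark that analogous estimates appear in \cite{Cammarota, Kotecky-Preiss}). Your sketch---Penrose/tree-graph bound on the Ursell function, reorganization as a sum over rooted trees of polymers, and a recursive estimate where the gap between the $r$-norm and the $(r+2)$-norm pays for both the attachment combinatorics and the branching---is exactly the standard argument one finds in those references, and your observation that the abstract framework applies verbatim to connected subsets of $L\bZ^d$ with the hard-core touching interaction $\delta$ is precisely how the citation is being used. So there is nothing to compare: you have supplied the outline the paper omits.
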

    \noindent 
    We remark that similar bounds between $\overline\Psi$ and ${\sf w}$ also appear in~\cite{Cammarota, Kotecky-Preiss}, among other places. Assuming Theorem~\ref{theorem:condition_ensuring_convergence}, we conclude this section with the proof of our main result. 
    \begin{proof}[Proof of Theorem~\ref{thm:exponential_clustering_in_lattice_YMH}]
        By~\eqref{eq:cluster_expansion_for_log_YMH_partition_function} we have 
        \[
         \log Z_{\Lambda}^{{\sf YMH,s}} = \log \big(Z_{\Lambda,f}^{{\sf G}}\cdot \Xi_{\Lambda} (\mathbf{0})\big) + \sum_{X\subset \La^\prime} \overline \Psi(X) \, ,
        \]
        where the sum on the right-hand side is absolutely convergent for all $\beta$ large enough, in view of Claim~\ref{claim:cluster_expansion_converge}. Hence, \cite[Theorem~5.8]{Friedli-Velenik_book} yields that $$(t_x,t_y) \mapsto \log Z_{\Lambda}^{{\sf YMH,s}}$$ is analytic, where $t_x,t_y$ are the complex parameters~\eqref{eq:def_of_V_s} which correspond to the ``sources". Utilizing~\eqref{eq:diffrentiating_the_log_partition_function} and Cauchy's estimates~\cite[Theorem~2.2.7]{Hormander-SeveralComplexVariables} in the polydisk of radius $\beta^{-10}$, we see that
        \begin{align*}
                 \bigg|\bE_{\Lambda}^{\sf YMH}\big[\theta_x \theta_y\big] - \bE_{\Lambda}^{\sf YMH}\big[\theta_x \big]\cdot & \bE_{\Lambda}^{\sf YMH}\big[ \theta_y\big] \bigg| \\ &= \bigg| \frac{\partial}{\partial t_x} \frac{\partial}{\partial t_x} \bigg|_{t_x=t_y=0} \log Z_{\Lambda}^{\sf YMH,s} \bigg| \\ & = \bigg|\sum_{\substack{X\subset \La^\prime \\ \{x,y\} \subset \cE(X) }} \frac{\partial}{\partial t_x} \frac{\partial}{\partial t_x} \bigg|_{t_x=t_y=0} \overline{\Psi} (X) \bigg| \le C_{\beta} \sum_{\substack{X\subset \La^\prime \\ \{x,y\} \subset \cE(X) }} \sup_{|t_x|,|t_y| \le \beta^{-10}}\overline{\Psi} (X) \, .
        \end{align*}
        Since all $X$ in the above sum must be connected, the constraint $\{x,y\} \subset \cE (X)$ implies that $$|X|\ge c L^{-1} \cdot \text{dist}(x,y) \, ,$$  and Claim~\ref{claim:bound_of_norm_of_psi_by_norm_of_w} gives the bound
        \begin{align*}
            &\bigg|\bE_{\Lambda}^{\sf YMH}\big[\theta_x \theta_y\big] - \bE_{\Lambda}^{\sf YMH}\big[\theta_x \big]\cdot \bE_{\Lambda}^{\sf YMH}\big[ \theta_y\big] \bigg| \\
            &\le C_\beta \sum_{\substack{X\subset \La^\prime \\ \{x,y\} \subset \cE(X) }} \overline{\Psi} (X) e^{|X|} e^{-|X|} \le C_\beta \, e^{-cL^{-1}\text{dist}(x,y)} \sum_{X\subset \La^\prime} \overline{\Psi}(X) e^{|X|}  \\ &\le C_\beta \, e^{-cL^{-1}\text{dist}(x,y)} \| \overline \Psi \|_1 \stackrel{\text{Claim}~\ref{claim:bound_of_norm_of_psi_by_norm_of_w}}{\le} C_\beta \,   e^{-cL^{-1}\text{dist}(x,y)} \sum_{n\ge 1} (2 \| {\sf w} \|_3)^n \, . 
        \end{align*}
        By Theorem~\ref{theorem:condition_ensuring_convergence}, for all $\beta$ large enough we have $\|{\sf w}\|_3 < 1/3$, which yields
        \[
        \bigg|\bE_{\Lambda}^{\sf YMH}\big[\theta_x \theta_y\big] - \bE_{\Lambda}^{\sf YMH}\big[\theta_x \big]\cdot \bE_{\Lambda}^{\sf YMH}\big[ \theta_y\big] \bigg| \le C_\beta  \, e^{-c_L \, \text{dist}(x,y)} \, , 
        \]
        and the proof is complete. 
    \end{proof}
    
    \section{Breakdown of the proof of Theorem~\ref{theorem:condition_ensuring_convergence}}
    \label{sec:breakdown_of_proof_condition_ensuring_convergence}
    Theorem~\ref{theorem:condition_ensuring_convergence} asserts that the polymer weight ${\sf w}$ given by~\eqref{eq:def_of_w} has small $r$-norm as $\beta\to \infty$. The goal of this section is to reduce this statement into smaller, slightly more technical claims about the weights $\rho_{\mathcal{B}}, W_2,$ and $K_1$ that make up ${\sf w}$. 
    
    \subsection{Control on the different weights}
    Recall from Definition~\ref{def:good_and_bad_blocks} that $\mathcal{G}\subset \La^\prime$ is the collection of all good $L$-blocks, and that $\mathcal{B} =  \La^\prime \setminus \mathcal{G}$ are the bad $L$-blocks. We further recall that $\cB_1$ is the $r_\beta$-neighborhood of $\mathcal{B}$ is the coarse lattice $L\Z^d$, given by~\eqref{eq:def_of_B_1}. The next lemma shows that, typically, the set $\mathcal{B}_1$ will not have large connected components.  
    \begin{lemma}
        \label{lemma:large_field_values_are_exponentially_rare}
        For $X\subset \La^\prime$ connected we set
        \begin{equation}
            \label{eq:def_of_tilde_rho}
            \widetilde{\rho}(X) = \sum_{\substack{\mathcal{B} \subset  \La^\prime \\ \mathcal{B}_1 = X}} \bigg(\int_{[-\pi,\pi]^{\mathcal{\cE(\cB)}}} \rho_{\mathcal{B}}(\boldsymbol{\eta}) \, {\rm d} \boldsymbol{\eta}_{\mathcal{B}}\bigg)
        \end{equation}
        where $\rho_\mathcal{B}$ is given by~\eqref{eq:def_of_rho_B}. We further set $ \widetilde{\rho}(X) = 0$ if $X$ is not connected. Then for all $L\ge 1$ and for all $r\ge 0$ we have
        \[
        \lim_{\beta\to\infty} \| \widetilde{\rho} \|_r = 0 \, .
        \]
    \end{lemma}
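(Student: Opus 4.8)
The plan is to show that the event that an $L$-block is bad is extremely unlikely once $\beta$ is large, so that $\mathcal{B}$ (and hence its $r_\beta$-neighborhood $\mathcal{B}_1$) behaves like a very subcritical site percolation configuration on $L\bZ^d$. First I would unpack the definition~\eqref{eq:def_of_rho_B} of $\rho_\mathcal{B}$ and bound each of its factors: the indicator $\zeta_\mathcal{B}$ forces every block in $\mathcal{B}$ to contain at least one edge with $|\theta_e|\ge T_\beta$; the factor $e^{-\beta V_\Lambda^\mathcal{B} - \beta S_\Lambda^\mathcal{B}}$ is, up to the bounded perturbation $g$, essentially $\exp(-c\beta\sum_{e\in\cE(\mathcal{B})}\theta_e^2)$ which is exponentially small in $\beta T_\beta^2$ on the relevant event; and the two ratios $Z^{\sf G}_{\mathcal{G},\eta}(\mathbf 0)/Z^{\sf G}_{\Lambda,f}(\mathbf 0)$ and $\Xi_{\mathcal{G},\eta}(\mathbf 0,+)/\Xi_\Lambda(\mathbf 0)$ should be shown to be bounded by $e^{C_L|\mathcal{B}|}$ using the block-independence structure (these ratios are products over blocks touching $\mathcal{B}$ of local quantities that are $O(1)$; one invokes Gaussian domain-monotonicity / explicit Gaussian computations together with the bounds on $V_{\Lambda,\mathcal{G}}$ developed for the $\Xi$ expansion, as well as the massive decay from Lemma~\ref{lemma:proca_field_is_massive} to control the boundary influence of $\eta$). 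The net effect is a bound of the shape
\[
\int_{[-\pi,\pi]^{\cE(\mathcal{B})}}\rho_\mathcal{B}(\boldsymbol\eta)\,{\rm d}\boldsymbol\eta_\mathcal{B}\le \big(C_L\, e^{-c\beta T_\beta^2}\big)^{|\mathcal{B}|}=\big(C_L\,e^{-c\log^{2(d+2)}\beta}\big)^{|\mathcal{B}|}=:\varepsilon_\beta^{|\mathcal{B}|},
\]
with $\varepsilon_\beta\to 0$ super-polynomially fast as $\beta\to\infty$.

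Next I would convert this per-block bound into a bound on $\widetilde\rho(X)$ and then on $\|\widetilde\rho\|_r$. Fix a connected $X\subset\La'$ with $x\in X$; the constraint $\mathcal{B}_1=X$ forces $\mathcal{B}\subset X$ and $|\mathcal{B}|\ge |X|/(C r_\beta^d)$, since each block of $\mathcal{B}$ can ``inflate'' to at most $Cr_\beta^d$ blocks of $X$ and $X$ must be covered. Summing over the at most $2^{|X|}$ choices of $\mathcal{B}\subset X$ gives
\[
\widetilde\rho(X)\le 2^{|X|}\varepsilon_\beta^{|X|/(Cr_\beta^d)}.
\]
Then
\[
\|\widetilde\rho\|_r=\sup_{x}\sum_{\substack{X\ni x\\ X\ \text{connected}}}\widetilde\rho(X)e^{r|X|}\le\sum_{k\ge 1}\#\{\text{connected }X\ni x,\ |X|=k\}\,\big(2e^{r}\varepsilon_\beta^{1/(Cr_\beta^d)}\big)^{k}.
\]
Using the standard bound $\#\{\text{connected }X\ni x,\ |X|=k\}\le (2d)^{2k}$ (or any exponential-in-$k$ count of lattice animals), the series converges and tends to $0$ as soon as $2e^{r}(2d)^2\varepsilon_\beta^{1/(Cr_\beta^d)}<1$. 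Since $\varepsilon_\beta^{1/(Cr_\beta^d)}=\exp\!\big(-c\log^{2(d+2)}\beta/(C\log^{2d}\beta)+ \tfrac{1}{C r_\beta^d}\log C_L\big)=\exp(-c\log^{4}\beta+o(1))\to 0$, the geometric ratio is eventually $<1$ for every fixed $r$ and every fixed $L$, and $\|\widetilde\rho\|_r\to 0$ as $\beta\to\infty$.

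The main obstacle I anticipate is not the combinatorial percolation-type estimate but the control of the two partition-function ratios in $\rho_\mathcal{B}$ by $e^{C_L|\mathcal{B}|}$, uniformly in the boundary data $\eta$ and in the volume $\Lambda$. For the Gaussian ratio one wants to say that decoupling the good blocks in $\mathcal{G}_1$ and integrating out $\mathcal{B}$ changes $\log Z^{\sf G}$ only by an amount proportional to the number of affected blocks; this should follow from a telescoping over faces (each derivative $\partial_{s_{\sf F}}\log Z^{\sf G}$ is a Gaussian covariance which by Lemma~\ref{lemma:proca_field_is_massive} is $O(1)$), but one must be careful that the $\eta$-dependence enters only through blocks adjacent to $\mathcal{B}$ and is damped by the mass gap. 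For the $\Xi$ ratio one additionally needs the a priori bound $e^{-\beta V_{\Lambda,\mathcal{G}}}\chi_{\mathcal{G}}\le e^{C_L|\mathcal{B}|}$-type control coming from the small-field cutoff $T_\beta$; here the key is precisely that on the support of $\chi_\mathcal{G}$ all relevant fields are $\le 2T_\beta$, so $\beta|g(\theta_e)|\lesssim \beta T_\beta^4=\log^{4(d+2)}\beta/\beta\to 0$, making $V_{\Lambda,\mathcal{G}}$ negligible on the good region. Once these two ratio bounds are in hand the rest is the routine estimate sketched above.
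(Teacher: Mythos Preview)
Your approach is essentially the paper's: bound $\int\rho_{\mathcal{B}}$ by an energy cost $e^{-c\beta T_\beta^2}$ per block of $\mathcal{B}$ times an entropy factor per block of $\mathcal{B}_1$, use $|\mathcal{B}|\ge |X|/(Cr_\beta^d)$, and sum over lattice animals. Two small corrections worth noting. First, the Gaussian ratio $Z_{\mathcal{G},\eta}^{\sf G}(\mathbf{0})/Z_{\Lambda,f}^{\sf G}(\mathbf{0})$ is only bounded by $\beta^{C_L|\mathcal{B}_1|}$, not $e^{C_L|\mathcal{B}|}$, since each non-cancelling single-block partition function is $\asymp\beta^{-C_L}$; your $\varepsilon_\beta$ should therefore carry an extra $\exp(C_L r_\beta^d\log\beta)=\exp(C_L\log^{2d+1}\beta)$, but this is still crushed by $e^{-c\log^{2d+4}\beta}$ and your final arithmetic $\varepsilon_\beta^{1/(Cr_\beta^d)}\to 0$ survives unchanged. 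Second, you overcomplicate the ratio bounds: because both numerator and denominator are evaluated at $\mathbf{s}=\mathbf{0}$ (respectively $(\mathbf{0},+)$), they factor \emph{exactly} over $L$-blocks, and blocks not touching $\mathcal{B}_1$ cancel identically; no telescoping over faces and no appeal to the mass gap of Lemma~\ref{lemma:proca_field_is_massive} is needed here, just single-block Gaussian computations and the pointwise bound $|\beta g(\theta)|\lesssim\beta T_\beta^4\to 0$ on the support of $\chi$ (this is Claim~\ref{claim:two_sided_inequality_for_ratio_of_xi} in the paper).
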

    The proof of Lemma~\ref{lemma:large_field_values_are_exponentially_rare} is provided in Section~\ref{sec:large_field_values} below. Next, we state a lemma that deals with the Glimm-Jaffe-Spencer weights $W_2$ from Section~\ref{subsection:expansion_of_gaussian_partition}.
    \begin{lemma}
        \label{lemma:GJS_weights_are_small}
        For all $r\ge 0$ there exists $L\ge L_0(r,m,d)$ large enough and $C_L\ge 1$ such that 
        \[
        \| W_2(\cdot,\mathcal{G}) \|_r \le C_{L} \, ,
        \] 
        for all $\beta\ge 1$ and $\mathcal{G}\subset \La^\prime$. Here $W_2$ is given by~\eqref{eq:def_of_W_2}.
    \end{lemma}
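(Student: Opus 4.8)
\emph{The plan.} Since $W_2(\Ga;\mathcal{G})=W_1(\Ga;\mathcal{G})-W_1(\Ga;\La')$, it suffices to establish an $L$-dependent bound $\|W_1(\cdot;\mathcal{H})\|_r\le C_L$, uniform in $\beta\ge 1$, in the boundary data $\eta$ (with $\|\eta\|_\infty\le\pi$), and in the domain of $L$-blocks $\mathcal{H}$; applying this with $\mathcal{H}=\mathcal{G}$ and $\mathcal{H}=\La'$ and the triangle inequality then gives the lemma (here $W_1(\cdot;\mathcal H)$ is extended by $0$ off $P'(\mathcal H_1)$). Recall from \eqref{eq:def_of_W_1} that $W_1(\Ga;\mathcal{H})=\int_{[0,1]^\Ga}\partial^\Ga\log Z^{\sf G}_{\mathcal{H},\eta}(\mathbf{s}_\Ga)\,\mathrm{d}\mathbf{s}_\Ga$, and that for a Gaussian measure $\log Z^{\sf G}_{\mathcal{H},\eta}(\mathbf{s})=-\tfrac12\log\det \mathsf{A}_{\mathcal{H}}(\mathbf{s})+\beta\,\mathcal{D}_\eta(\mathbf{s})+\mathrm{const}$, where $\mathsf{A}_{\mathcal{H}}(\mathbf{s})$ is the interpolated Proca form on the free edges and $\mathcal{D}_\eta(\mathbf{s})$ is the (harmonic) Dirichlet energy of the pinned data $\eta$. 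The two $\mathbf{s}$-dependent pieces are treated separately.

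\emph{The determinant piece.} Repeated Gaussian integration by parts gives $\partial^\Ga\bigl(-\tfrac12\log\det \mathsf{A}_{\mathcal{H}}(\mathbf{s})\bigr)=\bigl(-\tfrac{\beta}{2}\bigr)^{|\Ga|}\bigl\langle E_{{\sf F}_1};\dots;E_{{\sf F}_{|\Ga|}}\bigr\rangle_{\mu^{\sf G}_{\mathcal{H},\eta,\mathbf{s}}}$, the truncated correlation of the bond energies $E_{\sf F}=\sum_{p:\ \sigma_p=s_{\sf F}}({\sf d}\theta_p)^2$, and Wick's theorem rewrites this as a sum over single-cycle pairings weighted by products of $|\Ga|$ covariances. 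The essential input is the a priori estimate $|C_{\mathbf{s}}(e,e')|\le C_0\,\beta^{-1}e^{-c_0 m\,\mathrm{dist}(e,e')}$, valid \emph{uniformly} in $\mathbf{s}\in[0,1]^\Ga$ and $\beta\ge1$; this follows from (the proof of) Lemma~\ref{lemma:proca_field_is_massive}, since the interpolated action is still of Proca type $\tfrac12\sum_p\sigma_p({\sf d}\theta_p)^2+\tfrac m2\sum_e\theta_e^2$ with couplings $\sigma_p\in[0,1]$. The $|\Ga|$ factors $\beta^{-1}$ cancel $(\beta/2)^{|\Ga|}$, so this piece is $\beta$-independent after the cancellation.

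\emph{The combinatorial estimate.} One then bounds $\sum_{\Ga\ \text{connected},\ \Ga\ \text{touch}\ x}\bigl|\partial^\Ga(-\tfrac12\log\det \mathsf{A}_{\mathcal{H}}(\mathbf{s}))\bigr|\,e^{r|X(\Ga)|}$ uniformly in $\mathbf{s}$ and $x$. Summing out the plaquettes inside each face, the covariance decay collapses the $(d-1)$-dimensional lateral extent of a face into an $L$-independent constant, leaving at most a single factor $\lesssim L^{d-1}$ from the root face; a greedy transfer-type estimate handles the sum over cyclic pairings, leaving a geometric decay $e^{-c_1 m\,\ell(\Ga)}$, where $\ell(\Ga)$ measures a minimal connecting structure for the faces of $\Ga$ and grows with the number of $L$-blocks in $X(\Ga)$. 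Choosing $L\ge L_0(r,m,d)$ then makes the per-block decay $e^{-c_1 m L}$ dominate both the (at most exponential) entropy of connected subsets of $L\Z^d$ and the weight $e^{r|X(\Ga)|}$, so the series sums to a constant $C_L$.

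\emph{The Dirichlet piece, and the obstacle.} The term $\beta\,\partial^\Ga\mathcal{D}_\eta(\mathbf{s})$ is quadratic in $\eta$, hence bounded via $|\eta_e|\le\pi$; and because $\mathbf{s}=(s_{\sf F})_{{\sf F}\in P'(\mathcal{G}_1)}$ acts only on plaquettes strictly inside the buffered region $\mathcal{G}_1=\La'\setminus\mathcal{B}_1$, every $s$-derivative pulls down a covariance linking $\Ga$ to the pinned edges across the buffer $\mathcal{B}_1$ of coarse width $r_\beta=\log^2\beta$, producing a prefactor $\lesssim\beta\,e^{-c_0 m\,\mathrm{dist}(\Ga,\partial)}$ with $\mathrm{dist}(\Ga,\partial)\gtrsim L$ always (and $\gtrsim L\log^2\beta$ once $\beta$ is large), which dominates the $\beta$ and keeps this contribution bounded uniformly for $\beta\ge1$; for $\mathcal{H}=\La'$ there are no pinned edges and this piece is absent. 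The heart of the proof is the combinatorial resummation: one must control the signed sum over cyclic pairings, cope with faces being extended $(d-1)$-dimensional objects (so that two faces sharing an $L$-block can have plaquettes at bounded distance and the naive ``one decay factor per face'' fails), and nevertheless extract from a connected $\Ga$ enough decay — growing with $|X(\Ga)|$ — to beat the entropy once $L$ is large, uniformly in the volume of $\La_n$. This is the delicate step, carried out following \S9 of \cite{Balaban-Imbrie-Jaffe-Brydges}; everything else (the reduction to $W_1$, the Gaussian integration by parts, the cancellation of the powers of $\beta$, and the Dirichlet term) is routine.
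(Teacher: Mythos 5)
Your proposal is correct and follows essentially the same route as the paper: write $\partial^\Ga \log Z^{\sf G}(\mathbf{s})$ via Gaussian calculus as sums of covariance chains, use the $\mathbf{s}$- and $\beta$-uniform massive decay of the interpolated Proca covariance (Lemma~\ref{lemma:proca_field_is_massive}, and in the paper its derivative version Lemma~\ref{lemma:estimate_on_derivative_of_covariance}) so that the powers of $\beta$ cancel, and then beat the entropy of connected $\Ga$ and the weight $e^{r|X(\Ga)|}$ by the per-face decay $e^{-c m L}$ (using $d(\Ga)\gtrsim L|\Ga|$) once $L\ge L_0(r,m,d)$. The differences are organizational: you split $\log Z$ into a log-determinant plus a boundary Dirichlet form and expand cumulants of bond energies into cycle sums, whereas the paper takes a single $s_\gamma$-derivative to get $-\beta\,\bE^{\sf G}_{\mathbf{s}}[\partial_{s_\gamma}S]$ and then applies its Gaussian integration-by-parts formula (Lemma~\ref{lemma:gaussian_integration_by_parts}) together with Lemma~\ref{lemma:estimate_on_derivative_of_covariance}, which absorbs the boundary contribution through $\partial^\pi\bE^{\sf G}_{\mathbf{s}}[\theta_x]$ and the bound $|\eta_e|\le T_\beta\le 1$, while you use the cruder $|\eta_e|\le\pi$ compensated by the distance from $P^\prime(\cG_1)$ to the pinned region --- both work; the only small slip is attributing the truncated correlation taken under the boundary measure $\mu^{\sf G}_{\cG,\eta,\mathbf{s}}$ to the determinant piece (it is the centered correlation; the mean/boundary chains belong to your Dirichlet piece), which is harmless since you treat that piece separately.
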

    \noindent
    The proof of Lemma~\ref{lemma:GJS_weights_are_small} is given in Section~\ref{sec:the_GJS_expansion_for_gaussian_partition_functions}, where we also recall some standard Gaussian integration-by-parts formulas that will be useful in what follows. Finally, we need to deal with the weights $K_1$ given by~\eqref{eq:def_of_K_1}, which correspond to the non-Gaussian correction term. 
    For $\mathcal{B}_1$ given by~\eqref{eq:def_of_B_1} and $\mathcal{G}_1 = \La^\prime \setminus \mathcal{B}_1$ we set
    \begin{equation} \label{eq:def_of_C_B_1}
        \mathcal{C}(\mathcal{B}_1 ) = \big\{\Delta\subset P^\prime(\mathcal{G}_1) \, : \, \text{all connected components of $\Delta$ touch $\mathcal{B}_1$} \big\} \, .
    \end{equation}     
    The next lemma will be proved in Section~\ref{sec:dealing_with_non_gaussian_correction}.
    \begin{lemma}
    \label{lemma:bound_for_the_non_gaussian_weight}
       For all $r\ge 0$ and $L$ large enough there exists $C_L>0$ so that 
       \[
        \lim_{\beta\to\infty} \bigg( e^{-C_L|\mathcal{B}_1|}\sum_{\Delta\in \mathcal{C}(\mathcal{B}_1 ) } |K_1(\Delta;\mathcal{G})| \,  e^{r|X(\Delta)|} \bigg) = 0 \, . 
       \]
       Furthermore, when $\mathcal{B} = \mathcal{B}_1 = \emptyset$ (and consequently $\mathcal{G} = \mathcal{G}_1 = \La^\prime$), we have 
       \[
       \lim_{\beta\to \infty} \|K_1(\cdot,\La^\prime) \|_{r}  = 0 \, .
       \]
    \end{lemma}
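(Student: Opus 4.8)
\textbf{Proof proposal for Lemma~\ref{lemma:bound_for_the_non_gaussian_weight}.}

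The plan is to obtain a pointwise bound on $|K_1(\Delta;\mathcal{G})|$ that decays in $\beta$ and also decays geometrically in the number of faces $|\Delta|$ (and hence in $|X(\Delta)|$), with only a mild cost $e^{C_L|\mathcal{B}_1|}$ coming from the normalization $\Xi_{\mathcal{G},\eta}(\mathbf{0},+)$. The main input is the formula~\eqref{eq:def_of_K_1}, which differentiates $\Xi_{\mathcal{G},\eta}(\mathbf{s}_\Delta)$ once in each variable $s_{\sf F}$, ${\sf F}\in\Delta$. Each such derivative acts on $\exp(-\beta V_{\Lambda,\mathcal{G}}(\boldsymbol\theta;\mathbf{s}))$ and on the interpolated Gaussian measure $\mu^{\sf G}_{\mathcal{G},\eta,\mathbf{s}}$. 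The first kind of derivative brings down a factor $-\beta\,\partial_{s_{\sf F}}V_{\Lambda,\mathcal{G}}=-\beta\sum_{p\ni {\sf F}} g({\sf d}\theta_p)$, and using $|g(t)|\lesssim t^4$ together with the fact that on the support of $\chi_{\mathcal{G}}$ all field elements are $O(T_\beta)$, each such factor is $O(\beta T_\beta^4)=O(\log^{4d+8}\beta\,/\,\beta)$, which is $o(1)$. The derivatives hitting the Gaussian measure produce, via the standard integration-by-parts identity for interpolated Gaussians (to be recalled in Section~\ref{sec:the_GJS_expansion_for_gaussian_partition_functions}), insertions of the covariance derivative $\partial_{s_{\sf F}} C_{\mathbf{s}}$, which is supported near the face ${\sf F}$ and is bounded because the mass term $m/2\sum(\theta_e)^2$ is untouched by the interpolation; the resulting terms are again estimated using the a priori smallness of the field on the support of $\chi_{\mathcal{G}}$ and the massive decay from Lemma~\ref{lemma:proca_field_is_massive}.

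The bookkeeping is organized as follows. First I would expand $\partial^\Delta\bigl(e^{-\beta V}\chi_{\mathcal{G}}\,\mathrm{d}\mu^{\sf G}_{\mathbf{s}}\bigr)$ by the Leibniz rule into a sum over ways of distributing the $|\Delta|$ derivatives among the three objects $e^{-\beta V}$, $\chi_{\mathcal{G}}$, and the Gaussian density. Then I bound the integrand uniformly in $\mathbf{s}\in[0,1]^\Delta$ and in the boundary data $\eta$: each derivative contributes a factor that is $O(\log^{C}\beta/\beta)$ (or a bounded, rapidly-decaying covariance insertion), so a configuration using $k$ derivatives on $e^{-\beta V}$, the rest on $\chi_{\mathcal{G}}$ and the Gaussian, is bounded by $(C\log^{C}\beta/\beta)^{k}\cdot(\text{bounded})$. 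The combinatorial count of Leibniz terms is controlled by $|\Delta|!$ times a product of local multiplicities, but this is exactly compensated by: (i) the $(k!)^2$ bound on the derivatives of $\chi$ from property~(3) of $\chi$, which is where the Gevrey condition is used; and (ii) the integration $\int_{[0,1]^\Delta}$, which contributes $1/|\Delta|!$ worth of volume is not available, so instead one uses the standard trick that each face ${\sf F}$ that is differentiated carries a small factor, and summing over which faces are used (a connected subset near $\mathcal{B}_1$) yields a geometric series with small ratio. Concretely, one shows
\[
|K_1(\Delta;\mathcal{G})| \le \frac{1}{\Xi_{\mathcal{G},\eta}(\mathbf{0},+)}\cdot \Bigl(\tfrac{C\log^{C}\beta}{\sqrt\beta}\Bigr)^{|\Delta|},
\]
where the $\sqrt\beta$ (rather than $\beta$) in the denominator leaves plenty of room. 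For the normalization, Claim~\ref{claim:two_sided_inequality_for_ratio_of_xi} (to be established in Section~\ref{sec:dealing_with_non_gaussian_correction}) gives $\Xi_{\mathcal{G},\eta}(\mathbf{0},+)\ge e^{-C_L|\mathcal{B}_1|}$, since decoupling the $L$-blocks only changes the expectation by a bounded factor per block in $\mathcal{B}_1$ (the field remains massive, and $V_{\Lambda,\mathcal{G}}$ is small on the support of $\chi_{\mathcal{G}}$).

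With the pointwise bound in hand, the two displayed limits follow by summing. For the first, I would write
\[
e^{-C_L|\mathcal{B}_1|}\sum_{\Delta\in\mathcal{C}(\mathcal{B}_1)}|K_1(\Delta;\mathcal{G})|e^{r|X(\Delta)|}
\le \sum_{\Delta\in\mathcal{C}(\mathcal{B}_1)}\Bigl(\tfrac{C\log^{C}\beta}{\sqrt\beta}\Bigr)^{|\Delta|}e^{r|X(\Delta)|},
\]
and since every connected component of $\Delta\in\mathcal{C}(\mathcal{B}_1)$ touches $\mathcal{B}_1$, a standard Peierls/polymer counting argument (the number of connected face-sets of size $k$ rooted near a fixed vertex is at most $C_L^k$) shows the sum is bounded by $\bigl(C_L\log^{C}\beta/\sqrt\beta\bigr)$ times a convergent geometric series, once $\beta$ is large enough that $C_L e^{r}\log^{C}\beta/\sqrt\beta<1/2$; hence it tends to $0$. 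The second limit, $\|K_1(\cdot,\Lambda')\|_r\to 0$, is the special case $\mathcal{B}_1=\emptyset$: now every $\Delta$ counted in the $r$-norm is connected and touches a fixed $x\in L\bZ^d$, $\Xi_{\Lambda}(\mathbf{0},+)=\Xi_{\Lambda}(\mathbf{0})$ is bounded below by a constant, so the same geometric-series estimate gives $\|K_1(\cdot,\Lambda')\|_r\le \sum_{k\ge1}(C_L e^r\log^C\beta/\sqrt\beta)^k\to 0$.

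The main obstacle is the Leibniz-rule bookkeeping: controlling the factorial growth from the high-order derivatives of $\chi_{\mathcal{G}}$ and from the Leibniz expansion, and checking that the Gevrey bound $|\chi^{(k)}|\le C(k!)^2$ together with the smallness $T_\beta^k$ of the arguments (each derivative of $\chi(|\theta_e|/T_\beta)$ costs a factor $T_\beta^{-1}$, but the constraint $|\theta_e|\ge T_\beta$ on its support and the Gaussian tail bound $e^{-c\beta\theta_e^2}$ recover more than enough smallness) genuinely beats the combinatorics. This is precisely the point where the proof needs $L$ large (to make the per-block constants $C_L$ effective against $e^r$) and where one must be careful that the interpolation parameters $\mathbf{s}_\Delta$ do not reintroduce long-range correlations — handled by the fact that the mass term is $\mathbf{s}$-independent, so the interpolated covariance stays uniformly bounded and short-range.
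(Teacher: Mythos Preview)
Your overall architecture is the same as the paper's (Leibniz rule splitting the $\partial^\Delta$ between $e^{-\beta V}$, $\chi_{\mathcal{G}}$, and the Gaussian measure; smallness $\beta|g|\lesssim \beta T_\beta^4$ on the support of $\chi_{\mathcal{G}}$; Gaussian integration by parts for the measure derivatives), but two steps in your write-up are genuine gaps.

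\textbf{The normalization.} You invoke Claim~\ref{claim:two_sided_inequality_for_ratio_of_xi} to get $\Xi_{\mathcal{G},\eta}(\mathbf{0},+)\ge e^{-C_L|\mathcal{B}_1|}$. That claim bounds the \emph{ratio} $\Xi_{\mathcal{G},\eta}(\mathbf{0},+)/\Xi_\Lambda(\mathbf{0})$; the denominator $\Xi_{\mathcal{G},\eta}(\mathbf{0},+)$ by itself is a product over \emph{all} $L$-blocks of $\mathcal{G}$ and can be as small as $e^{-C_L|\mathcal{G}|}$. With your stated pointwise bound on the numerator, $|K_1|$ would then carry a volume factor and the sum would diverge. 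What actually happens (and what the paper uses) is a cancellation: since $(\mathbf{s}_\Delta)_{\sf F}=0$ for ${\sf F}\notin\Delta$, both $\partial^\Delta\Xi_{\mathcal{G},\eta}(\mathbf{s}_\Delta)$ and $\Xi_{\mathcal{G},\eta}(\mathbf{0},+)$ factor over $L$-blocks, and every block not touching $\mathcal{X}=X(\Delta)\cup(\mathcal{B}_1\setminus\mathcal{B})$ cancels exactly. Only then do you get a ratio of size $e^{C_L(|X(\Delta)|+|\mathcal{B}_1|)}$; for the $X(\Delta)$ blocks one even needs the sharper per-block lower bound $\ge 1/2$ (rather than $\ge c_L$), obtained from a Gaussian tail estimate, so that this factor can be absorbed into $e^{-R|X(\Delta)|}$.

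\textbf{The factorial control.} Your paragraph on ``Leibniz-rule bookkeeping'' is where the argument breaks. After Gaussian integration by parts (Lemma~\ref{lemma:gaussian_integration_by_parts}), the derivatives on the measure become field-derivatives $\prod_\pi D^\pi$ acting on $e^{-\beta V}\chi_{\mathcal{G}}$, and there is nothing preventing many of these from landing on the \emph{same} edge variable. If ${\sf n}$ derivatives hit a single $\chi(\theta_e/T_\beta)$, the Gevrey bound gives $({\sf n}!)^2 T_\beta^{-{\sf n}}$; if they hit $e^{-\beta V}$, Cauchy's estimate gives an ${\sf n}!$. Your proposed cure --- the Gaussian tail on $\{|\theta_e|\ge T_\beta\}$ --- yields a single factor $e^{-c\log^{2d+4}\beta}$ that does not depend on ${\sf n}$ and cannot beat $({\sf n}!)^2$. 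The paper handles this by the Glimm--Jaffe--Spencer combinatorial lemma (Claim~\ref{claim:combinatorial_estimate_on_sum_of_factorials}): if ${\sf n}(x,y)$ is large, then the distinct partition elements $\pi$ with ${\sf p}(\pi)=(x,y)$ are disjoint and hence many of the path lengths $d(x,y;\pi)$ must be $\gtrsim {\sf n}(x,y)^{1/d}$, so $\prod({\sf n}!)^2\le C_\eta^{|\mathcal{X}|}e^{\eta\sum d(x_\ell,y_\ell;\pi_\ell)}$. This is then absorbed into the exponential decay $e^{-c\,m\,d(x,y;\pi)}$ coming from Lemma~\ref{lemma:estimate_on_derivative_of_covariance}. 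Without this step you cannot get the per-face smallness uniformly in $|\Delta|$, and the claimed bound $(C\log^C\beta/\sqrt\beta)^{|\Delta|}$ does not follow from what you wrote.
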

    \noindent
    We also need to deal separately with the case $\Delta = \emptyset$, for which the next simple claim (proved in Section~\ref{sec:large_field_values} below) would be useful.
    \begin{claim}
    \label{claim:bound_on_K_1_emptyset}
        For all $L$ large enough there exists $C_L\ge 1$ so that
        \[
        K_1(\emptyset,\mathcal{G}) \le e^{C_L |\mathcal{B}_1|} \, ,
        \]
        for all $\mathcal{G}\subset  \La^\prime$. 
    \end{claim}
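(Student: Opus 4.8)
The plan is to bound $K_1(\emptyset,\mathcal{G})$ by expanding the definition~\eqref{eq:def_of_K_1} with $\Delta=\emptyset$, which (by the convention in Claim~\ref{claim:fund_thm_of_calculus}) gives simply
\[
K_1(\emptyset,\mathcal{G}) = \frac{\Xi_{\mathcal{G},\eta}(\mathbf{0})}{\Xi_{\mathcal{G},\eta}(\mathbf{0},+)} \, .
\]
So the task reduces to showing that this ratio is at most $e^{C_L|\mathcal{B}_1|}$, uniformly in $\eta$ and $\mathcal{G}$. Recall that $\Xi_{\mathcal{G},\eta}(\mathbf{0})$ is an expectation of $e^{-\beta V_{\Lambda,\mathcal{G}}}\chi_{\mathcal{G}}$ under the Gaussian measure $\mu_{\mathcal{G},\eta,\mathbf{0}}^{\sf G}$ in which the $L$-blocks of $\mathcal{G}_1$ are decoupled from one another (but $\mathcal{G}\setminus\mathcal{G}_1$ is still coupled, and attached to the boundary $\eta$), while $\Xi_{\mathcal{G},\eta}(\mathbf{0},+)$ is the same object with \emph{all} faces in $P^\prime(\mathcal{G})$ cut, so that every $L$-block in $\mathcal{G}$ becomes independent. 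Both quantities therefore factor: $\Xi_{\mathcal{G},\eta}(\mathbf{0},+)$ factors completely over the blocks $v\in\mathcal{G}$, whereas $\Xi_{\mathcal{G},\eta}(\mathbf{0})$ factors over the blocks of $\mathcal{G}_1$ together with one lumped factor coming from $\mathcal{G}\setminus\mathcal{G}_1=\mathcal{B}_1\setminus\mathcal{B}$ (which has $|\mathcal{B}_1\setminus\mathcal{B}|\le|\mathcal{B}_1|$ blocks). Hence in the ratio the genuine $\mathcal{G}_1$-factors cancel exactly, and only the discrepancy over the $O(|\mathcal{B}_1|)$ blocks near the bad region survives.

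The key point is then a uniform two-sided bound on the single-block (and small-cluster) contributions: one shows that for each individual $L$-block $Q(v)$, the local factor
\[
\bE^{\sf G}\big[e^{-\beta V_{Q(v)}}\chi_{Q(v)}\big]
\]
lies in $[e^{-C_L}, e^{C_L}]$ for all $\beta\ge 1$, and similarly that a lumped factor over $k$ blocks lies in $[e^{-C_L k}, e^{C_L k}]$. The upper bound is easy since $\chi\le 1$ and $V$ is bounded below on the support of $\chi_{\mathcal{G}}$ (where every field element is small, so $g(\theta_e)\ge -O(T_\beta^4)$ and $g(\mathsf{d}\theta_p)$ is controlled, the source term contributing at most $\beta^{-10}\pi$), while the lower bound follows from restricting the Gaussian expectation to the event that all relevant field elements are, say, $\le T_\beta/2$ in absolute value — an event of probability bounded below uniformly in $\beta$ by the massiveness of the Proca field (Lemma~\ref{lemma:proca_field_is_massive}) and a union bound over the $O(L^d)$ edges in a block — on which $\chi_{\mathcal{G}}\equiv 1$ and $\beta V$ is $O(1)$. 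Multiplying these single-block estimates, the ratio $\Xi_{\mathcal{G},\eta}(\mathbf{0})/\Xi_{\mathcal{G},\eta}(\mathbf{0},+)$ is bounded by $e^{C_L|\mathcal{B}_1|}$ as claimed.

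The main obstacle is controlling the \emph{lumped} factor coming from the coupled region $\mathcal{G}\setminus\mathcal{G}_1$ attached to the boundary data $\eta$: there one cannot simply decouple, and the factor depends on $\eta$, which in turn is being integrated against $\zeta_{\mathcal{B}}$ outside this claim. One needs the lower bound on this factor to be uniform in $\eta\in[-\pi,\pi]^{\cE(\mathcal{B})}$ — but this is fine, since even conditioning on arbitrary boundary values in $[-\pi,\pi]$, the Gaussian field restricted to a tube of $O(|\mathcal{B}_1|)$ blocks still assigns probability at least $e^{-C_L|\mathcal{B}_1|}$ to all its field elements being small (here one uses that the conditional variances are bounded and the conditional means are controlled by the boundary data, which lives on a set of bounded diameter relative to the block scale). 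Tracking the $L$-dependence carefully so that $C_L$ comes out as a genuine constant (depending on $L$, $d$, $m$ but not $\beta$) is the only real bookkeeping, and it is routine once the single-block estimates above are in place.
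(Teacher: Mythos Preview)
Your approach is correct and essentially the same as the paper's: both start from $K_1(\emptyset,\mathcal{G})=\Xi_{\mathcal{G},\eta}(\mathbf{0})/\Xi_{\mathcal{G},\eta}(\mathbf{0},+)$, cancel the block factors away from $\mathcal{B}_1$, and bound the $O(|\mathcal{B}_1|)$ surviving factors via the pointwise estimate $\mathbf{1}_{\{|\theta|\le T_\beta\}}\le \chi(|\theta|/T_\beta)e^{-\beta g(\theta)}\le 2$. The paper streamlines your last paragraph by inserting $\Xi_\Lambda(\mathbf{0})$ as an intermediate and writing $K_1(\emptyset,\mathcal{G})=\frac{\Xi_\Lambda(\mathbf{0})}{\Xi_{\mathcal{G},\eta}(\mathbf{0},+)}\cdot\frac{\Xi_{\mathcal{G},\eta}(\mathbf{0})}{\Xi_\Lambda(\mathbf{0})}$: the first ratio is handled directly by Claim~\ref{claim:two_sided_inequality_for_ratio_of_xi}, and in the second the denominator $\Xi_\Lambda(\mathbf{0})$ factors over \emph{all} blocks with \emph{free} boundary, so the per-block lower bound comes straight from~\eqref{eq:two_sided_bound_for_corrector_in_given_L_block} and the $\eta$-uniformity issue you flag at the end never arises.

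One small correction to your factorization: $\Xi_{\mathcal{G},\eta}(\mathbf{0})$ does not quite factor over all blocks of $\mathcal{G}_1$. A block $u\in\mathcal{G}_1$ adjacent to some $v\in\mathcal{B}_1\setminus\mathcal{B}$ remains coupled to $v$, since the face ${\sf F}(u,v)$ is not in $P'(\mathcal{G}_1)$ and hence carries $\sigma_p=1$. So the blocks that actually cancel are those in $\mathcal{G}_1$ not touching $\mathcal{B}_1$; the leftover lumped factor covers $(\mathcal{B}_1\setminus\mathcal{B})$ together with its $\mathcal{G}_1$-neighbours, which is still $O(|\mathcal{B}_1|)$ blocks and does not affect the conclusion.
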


    \subsection{Combining the ingredients}
    We are almost ready to give the proof of Theorem~\ref{theorem:condition_ensuring_convergence}. Before doing so, we state two simple combinatorial facts that will be helpful in the proof, and also later on. 
    \begin{claim}[{\cite[Proposition~V.7.14]{Simon-book}}]
        \label{claim:bound_on_the_number_of_connected_sets}
        There exists $C>0$ so that for all $k\ge 1$ we have
        \[
        \# \big\{ X\subset \bZ^d \, : \, 0\in X, \   |X| = k, \ \text{and $X$ is connected} \big\} \le \exp(C k) \, .
        \]
    \end{claim}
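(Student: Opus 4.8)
The statement is the classical bound on the number of lattice animals (connected subsets of $\bZ^d$ of a given size through a fixed point), and the plan is to prove it by an encoding argument: to each such set we associate a lattice walk of controlled length, and then we count the walks.

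First I would fix a connected set $X\subset\bZ^d$ with $0\in X$ and $|X|=k$. Since the subgraph of $\bZ^d$ induced on $X$ is connected, it contains a spanning tree; fix one canonically, say $T=T(X)$, and root it at the origin. Now perform a depth-first traversal of $T$: starting at $0$, repeatedly move along an unused tree edge to an unvisited child, and whenever the current vertex has no unvisited children, backtrack along the edge to its parent. This traversal uses each of the $k-1$ edges of $T$ exactly twice (once in each direction), so it produces a closed walk $w(X)=(0=x_0,x_1,\ldots,x_{2(k-1)}=0)$ in $\bZ^d$ of length $2(k-1)$ whose set of visited vertices is exactly $X$.

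Next I would bound the number of such walks. Each increment $x_{i+1}-x_i$ is one of the $2d$ nearest-neighbor steps $\pm e_1,\ldots,\pm e_d$, so the number of walks of length $2(k-1)$ issuing from the origin is at most $(2d)^{2(k-1)}$. Since $X$ is recovered from $w(X)$ as its set of visited vertices, the map $X\mapsto w(X)$ is injective, and therefore
\[
\#\big\{X\subset\bZ^d \, : \, 0\in X,\ |X|=k,\ \text{$X$ connected}\big\} \le (2d)^{2(k-1)} \le \exp(Ck),
\]
with, e.g., $C=2\log(2d)$, which is the assertion of the claim.

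There is no genuine obstacle here; the only points requiring a line of care are that the depth-first traversal does return to the root after exactly $2(k-1)$ steps having visited all of $X$ (immediate, since each tree edge is traversed once in each direction), and that the very crude count $(2d)^{2(k-1)}$ of walks already suffices — one need not distinguish ``advance'' from ``backtrack'' moves in the traversal, although doing so would only improve the constant $C$.
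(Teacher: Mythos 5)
Your proof is correct: the spanning-tree/depth-first-traversal encoding, giving at most $(2d)^{2(k-1)}$ closed walks and hence at most $e^{Ck}$ connected sets through the origin, is the standard argument for this lattice-animal bound. The paper itself supplies no proof and simply cites \cite[Proposition~V.7.14]{Simon-book}, and your argument is essentially the classical one found there, so there is nothing to correct; the only cosmetic remark is that injectivity of $X\mapsto w(X)$ is exactly the observation that $X$ is the vertex set visited by the walk, which you already state.
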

    \begin{claim}
    \label{claim:bound_on_function_on_collection_of_subsets}
        Let $f$ be a non-negative function on subsets of $L\bZ^d$ and set
        \[
        F(\mathbf{X}) = \prod_{j=1}^n f({\sf P}_j) \, ,
        \]
        where $\mathbf{X} = ({\sf P}_1,\ldots,{\sf P}_n)$ is a collection of such  
        subsets. For all $A\ge 1$ and for all $\widetilde{{\sf P}} \subset L\bZ^d$, we have
        \[
        \sum_{n\ge 1} \frac{1}{n!} \bigg(\sum_{\substack{\mathbf{X} = ({\sf P}_1,\ldots,{\sf P}_n) \\ {\sf P}_j \  \text{and $\widetilde{{\sf P}}$ touch}}} F(\mathbf{X}) \bigg) \le \frac{1}{A} \exp\Big(A \, |\widetilde{{\sf P}}| \cdot \|f\|_0\Big) \, .
        \]
        A similar bound also holds for functions defined on the faces of $L\bZ^d$. 
    \end{claim}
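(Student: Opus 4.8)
The plan is to exploit the product structure of $F$ to factorize the inner sum over tuples, thereby reducing the whole estimate to a single sum over connected polymers touching $\widetilde{\sf P}$, which is controlled by $\|f\|_0$, and then to recognize the resulting series in $n$ as $e^{M}-1$ for the appropriate quantity $M$.

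Concretely, I would first set $M := \sum f({\sf P})$, the sum running over connected ${\sf P}\subset L\bZ^d$ that touch $\widetilde{\sf P}$; restricting to connected ${\sf P}$ is harmless in the intended applications, where $f$ is one of ${\sf w}$, $|W_2(\cdot,\cG)|$, $|K_1(\cdot,\cG)|$, all of which vanish on disconnected sets. Since $F(\mathbf{X})=\prod_j f({\sf P}_j)$ and the only constraint in the inner sum is that \emph{each} ${\sf P}_j$ separately touch $\widetilde{\sf P}$ — with no interaction between distinct entries of the tuple — the sum over ordered $n$-tuples factorizes and equals $M^{n}$. It then remains only to bound $M$.

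For that, I would note that if a connected ${\sf P}$ touches $\widetilde{\sf P}$, then by the definition of ``touch'' there is at least one vertex $v\in\widetilde{\sf P}$ with ${\sf P}$ touching $v$; bounding the sum by first choosing such a $v$ (and accepting the harmless overcounting when ${\sf P}$ touches several) gives
\[
M \;\le\; \sum_{v\in\widetilde{\sf P}}\ \sum_{\substack{{\sf P}\ \text{connected}\\ {\sf P}\ \text{touch}\ v}} f({\sf P}) \;\le\; |\widetilde{\sf P}|\cdot\|f\|_{0}\, ,
\]
directly from Definition~\ref{def:r_norm} with $r=0$. Combining this with the factorization, $\sum_{n\ge1}\tfrac1{n!}M^{n}=e^{M}-1$, and since $M\le y:=|\widetilde{\sf P}|\,\|f\|_{0}$ the proof is finished by the elementary inequality $e^{y}-1\le\tfrac1A e^{Ay}$, valid for all $y\ge0$ and $A\ge1$: the difference of the two sides equals $1/A>0$ at $y=0$ and has nonnegative derivative $e^{Ay}-e^{y}$ for $y\ge0$, hence stays positive; applying this with $y=|\widetilde{\sf P}|\,\|f\|_0$ gives the stated bound. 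The version for functions on faces is identical, replacing ${\sf P}$ by $X(\Ga)$ throughout and using the second norm in Definition~\ref{def:r_norm}.

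I do not expect a genuine obstacle here — this is a routine exponential-moment estimate of the kind ubiquitous in cluster-expansion arguments. The only points that need a little care are matching the combinatorial ``touch'' conventions so that the single-vertex sum is genuinely dominated by $\|f\|_{0}$ (rather than by $\|f\|_{0}$ times a dimension-dependent constant), and selecting the elementary bound in the last step so as to reproduce exactly the stated form $\tfrac1A\exp\!\big(A\,|\widetilde{\sf P}|\,\|f\|_{0}\big)$ with its free parameter $A\ge1$.
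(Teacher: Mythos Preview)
Your proposal is correct and follows essentially the same route as the paper: factorize the inner sum over ordered tuples into $M^n$ with $M=\sum_{{\sf P}\text{ touches }\widetilde{\sf P}} f({\sf P})$, bound $M\le |\widetilde{\sf P}|\,\|f\|_0$ by summing over a touching vertex, and then absorb the parameter $A$. The only cosmetic difference is that the paper multiplies through by $A$ and uses $A\,M^n\le (AM)^n$ for $n\ge 1$, $A\ge 1$ before summing to $e^{AM}$, whereas you sum exactly to $e^M-1$ and then apply the elementary inequality $e^y-1\le \tfrac{1}{A}e^{Ay}$; these are equivalent manipulations.
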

    \begin{proof}
    We have
    \begin{align*}
        A \sum_{n\ge 1} \frac{1}{n!} & \bigg(\sum_{\substack{\mathbf{X} = ({\sf P}_1,\ldots,{\sf P}_n) \\ {\sf P}_j \  \text{and $\widetilde{{\sf P}}$ touch }}} F(\mathbf{X}) \bigg) \\ & = A \sum_{n\ge 1} \frac{1}{n!} \bigg(\sum_{{\sf P} \  \text{and $\widetilde{{\sf P}}$ touch}} f({\sf P}) \bigg)^n \\ &\le \sum_{n\ge 1} \frac{1}{n!} \bigg( A \sum_{{\sf P} \  \text{and $\widetilde{{\sf P}}$ touch}}   f({\sf P}) \bigg)^n \le \exp\Big(A \sum_{{\sf P} \  \text{and $\widetilde{{\sf P}}$ touch}}   f({\sf P}) \Big) \le \exp\Big(A \, |\widetilde{{\sf P}}| \cdot  \| f \|_0 \Big) \, .
    \end{align*}
    \end{proof}
    \noindent
    We conclude the section with the proof of Theorem~\ref{theorem:condition_ensuring_convergence}, assuming all of the above. 
    \begin{proof}[Proof of Theorem~\ref{theorem:condition_ensuring_convergence}]
        Recall that for ${\sf P}\subset \La^\prime$ connected we have 
        \begin{equation*}
        {\sf w}({\sf P}) = \sum_{(\mathcal{B},\mathbf{\Ga},\Delta) \ \text{is a polymer on } {\sf P}} \frac{1}{|\mathbf{\Ga}|!} \int_{[-\pi,\pi]^{\cE(\mathcal{B})}} \rho_{\mathcal{B}}(\boldsymbol{\eta}) \cdot   \prod_{\Ga\in \mathbf{\Ga}} W_2(\Ga;\mathcal{G}) \cdot K_1(\Delta;\mathcal{G}) \,  {\rm d}\boldsymbol{\eta}_{\mathcal{B}} \, , 
    \end{equation*}
    where the notion of a polymer is spelled out in Definition~\ref{def:polymer}. In particular, for any $(\mathcal{B},\mathbf{\Ga},\Delta)$ occurring in the above sum we have
    \begin{equation}
    \label{eq:bound_on_the_size_of_polymer_in_term_of_its_components}
        |{\sf P}| \le |\cB| + \sum_{\Ga\in \boldsymbol{\Ga}} |X(\Ga)| + |X(\Delta)| \, . 
    \end{equation}
    Let $R = r + C_d$ for some sufficiently large $C_d>0$ that we choose later. The bound~\eqref{eq:bound_on_the_size_of_polymer_in_term_of_its_components} implies that
    \begin{multline} \label{eq:bound_of_weight_w_after_introducing_R}
        |{\sf w}({\sf P})| e^{R|{\sf P}|}  \le  \sum_{(\mathcal{B},\mathbf{\Ga},\Delta) \ \text{is a polymer on } {\sf P}} \frac{1}{|\mathbf{\Ga}|!} \int_{[-\pi,\pi]^{\cE(\mathcal{B})}} \rho_{\mathcal{B}}(\boldsymbol{\eta}) \, e^{R|\cB|}  \\ \times   \Big(\prod_{\Ga\in \mathbf{\Ga}} W_2(\Ga;\mathcal{G}) \, e^{R|X(\Ga)|}\Big) \cdot K_1(\Delta;\mathcal{G}) \,  e^{R|X(\Delta)|} \,  {\rm d}\boldsymbol{\eta}_{\mathcal{B}} \, .
    \end{multline}
    To further bound the right-hand side of~\eqref{eq:bound_of_weight_w_after_introducing_R}, we split the sum according to whether $\mathcal{B} = \emptyset$ or $\mathcal{B}\not= \emptyset$. Denote by
    \begin{equation*}
        \mathcal{N}_1  =  \sum_{\substack{(\mathcal{B},\mathbf{\Ga},\Delta) \ \text{is a polymer on } {\sf P} \\ \mathcal{B}\not=\emptyset}} (\cdots)  \, ,  \qquad \text{and} \qquad \mathcal{N}_2 = \sum_{(\emptyset,\mathbf{\Ga},\Delta) \ \text{is a polymer on } {\sf P}} (\cdots) \, , 
    \end{equation*}
    where the summand in both sums is the same as in~\eqref{eq:bound_of_weight_w_after_introducing_R}. We start by estimating $\mathcal{N}_1$. Indeed, we first estimate the sum over $\boldsymbol{\Ga}$ while holding $\mathcal{B}$ and $\Delta$ fixed. By Claim~\ref{claim:condition_for_which_W_2_non_zero}, $W_2(\Ga,\mathcal{G}) = 0$ unless $\Ga$ is connected and touches $\mathcal{B}_1$. Hence, Claim~\ref{claim:bound_on_function_on_collection_of_subsets} (applied with $A=1$ and $\widetilde{{\sf P}} = \cB_1$) shows that
    \begin{equation*}
        \sum_{\boldsymbol{\Ga}} \frac{1}{|\boldsymbol{\Ga}|!} \Big(\prod_{\Ga\in \mathbf{\Ga}} W_2(\Ga;\mathcal{G}) \, e^{R|X(\Ga)|}\Big)  \stackrel{\text{Claim}~\ref{claim:bound_on_function_on_collection_of_subsets}}{ \le} \exp\Big( \| W_2(\cdot;\mathcal{G}) \|_R \cdot |\mathcal{B}_1|\Big) \stackrel{\text{Lemma}~\ref{lemma:GJS_weights_are_small}}{\le } \exp\Big(C_L |\mathcal{B}_1| \Big)
    \end{equation*}
    for all $L$ sufficiently large and $\beta \ge 1$. We get that
    \begin{equation} \label{eq:bound_on_weight_w_with_R_first}
        \mathcal{N}_1 \le \sum_{\substack{(\mathcal{B},\Delta) \\ \mathcal{B}\not=\emptyset , \ \mathcal{B} \cup X(\Delta) \subset {\sf P} }} e^{C_L|\mathcal{B}_1|} \int_{[-\pi,\pi]^{\cE(\mathcal{B})}} \rho_{\mathcal{B}}(\boldsymbol{\eta})   \cdot K_1(\Delta;\mathcal{G}) \,  e^{R|X(\Delta)|} \,  {\rm d}\boldsymbol{\eta}_{\mathcal{B}} \, .
    \end{equation}
    Moving on, we fix $\mathcal{B}$ and the components of $\Delta$ which touch $\mathcal{B}_1$ and bound the sum over the components strictly inside $\mathcal{G}_1$. Since $K_1(\Delta;\mathcal{G})$ factors across these connected components (see~\eqref{eq:K_1_factor_into_connected_components}), and since $K_1(\Delta;\mathcal{G}) = K_1(\Delta;\La^\prime)$ for $\Delta$ strictly inside $\mathcal{G}_1$,  we can apply Claim~\ref{claim:bound_on_function_on_collection_of_subsets} once more (with $A=1$ and $\widetilde {\sf P} = {\sf P}$) and get that the corresponding sum is less than
    \[
    \exp\Big(\| K_1( \cdot,\La^\prime) \|_{R} \cdot |{\sf P}| \Big) \stackrel{\text{Lemma}~\ref{lemma:bound_for_the_non_gaussian_weight}}{\le} \exp(|{\sf P}|)
    \]
    for all $\beta$ large enough. To deal with the components of $\Delta$ that touch $\mathcal{B}_1$ (that is, $\Delta$ which belong to the set $\mathcal{C}(\mathcal{B}_1)$ given by~\eqref{eq:def_of_C_B_1}), we apply Lemma~\ref{lemma:bound_for_the_non_gaussian_weight} to get that the corresponding sum is  less than
    \[
       \sum_{\Delta\in \mathcal{C}(\mathcal{B}_1 ) } |K_1(\Delta;\mathcal{G})| \,  e^{R|X(\Delta)|} \le e^{C_L|\mathcal{B}_1|} \, , 
    \]  
    for all $\beta$ large. Finally, we need to consider the case where $\Delta = \emptyset$, for which Claim~\ref{claim:bound_on_K_1_emptyset} shows that
    \[
    |K_1(\emptyset;\mathcal{G})| \le e^{C_L |\mathcal{B}_1|} \, . 
    \]
    Plugging all of these observations into~\eqref{eq:bound_on_weight_w_with_R_first}, we arrive at the bound 
    \begin{equation} \label{eq:bound_on_weight_w_with_R_second}
        \mathcal{N}_1 \le  e^{|{\sf P}|} \sum_{\substack{\mathcal{B}\not=\emptyset \\  \cB  \subset {\sf P} }} e^{C_L|\mathcal{B}_1|} \int_{[-\pi,\pi]^{\cE(\mathcal{B})}} \rho_{\mathcal{B}}(\boldsymbol{\eta}) \,  {\rm d}\boldsymbol{\eta}_{\mathcal{B}} \, .
    \end{equation}
    We note that the sum over $\mathcal{B}$ on the right-hand side of~\eqref{eq:bound_on_weight_w_with_R_second} is nothing but
    \[
    \sum_{\substack{\mathcal{B}\not=\emptyset \\  \cB  \subset {\sf P} }} e^{C_L|\mathcal{B}_1|} \int_{[-\pi,\pi]^{\cE(\mathcal{B})}} \rho_{\mathcal{B}}(\boldsymbol{\eta}) \,  {\rm d}\boldsymbol{\eta}_{\mathcal{B}}  = \sum_{\emptyset \not= X\subset {\sf P} } \prod_{\substack{ x \subset X \\ \text{ $x$ is a maximal connected component of $X$ }}} \big( \widetilde{\rho}(x) \, e^{C_L|x|}\big) \, ,
    \]
    where $\widetilde{\rho}$ is given by~\eqref{eq:def_of_tilde_rho}. Applying Claim~\ref{claim:bound_on_function_on_collection_of_subsets} with $A = \| \widetilde{\rho} \|_{C_L}^{-1}$ and $\widetilde{\sf P} = {\sf P}$, we get that 
    \[
    \sum_{\substack{\mathcal{B}\not=\emptyset \\  \cB  \subset {\sf P} }} e^{C_L|\mathcal{B}_1|} \int_{[-\pi,\pi]^{\cE(\mathcal{B})}} \rho_{\mathcal{B}}(\boldsymbol{\eta}) \,  {\rm d}\boldsymbol{\eta}_{\mathcal{B}}  \le \| \widetilde \rho \|_{C_L} \cdot e^{|{\sf P}|} \, ,
    \]
    and by plugging into~\eqref{eq:bound_on_weight_w_with_R_second} we get
    \begin{equation}\label{eq:bound_on_N_1}
        \mathcal{N}_1 \le e^{2|{\sf P}|}  \, \| \widetilde \rho \|_{C_L} \, .
    \end{equation}
    To deal with the sum $\mathcal{N}_2$, we note that Claim~\ref{claim:condition_for_which_W_2_non_zero} together with the fact that $\cB = \emptyset$ implies that $W_2(\Ga;\La) = 0$ for all $\Ga$. Thus, the only way to form a polymer when is to have $\boldsymbol{\Ga} = \emptyset$ and $X(\Delta) = {\sf P}$. This means that 
    \[
    \mathcal{N}_2 =  \sum_{ \substack{\Delta \  \text{such that } \\ X(\Delta) = {\sf P}} }  |K_1(\Delta;\La^\prime)| \,  e^{R|X(\Delta)|} =  e^{R|{\sf P}|} \sum_{ \substack{\Delta \  \text{such that } \\ X(\Delta) = {\sf P}} }  |K_1(\Delta;\La^\prime)| \, .
    \]
    Combining the above with the bound~\eqref{eq:bound_on_N_1}, we get from~\eqref{eq:bound_of_weight_w_after_introducing_R} that 
    \[
    |{\sf w}({\sf P})| \, e^{R|{\sf P}|}  \le  \mathcal{N}_1 + \mathcal{N}_2 \le  e^{R|{\sf P}|} \sum_{ \substack{\Delta \  \text{such that } \\ X(\Delta) = {\sf P}} }  |K_1(\Delta;\La^\prime)| + e^{2|{\sf P}|} \cdot \| \widetilde \rho \|_{C_L} \, ,
    \]
    which implies
    \begin{equation}\label{eq:bound_on_w_after_everything}
        |{\sf w}({\sf P})| \le \sum_{ \substack{\Delta \  \text{such that } \\ X(\Delta) = {\sf P}} }  |K_1(\Delta;\La^\prime)| + e^{(2-R)|{\sf P}|} \cdot \| \widetilde \rho \|_{C_L} \, .
    \end{equation}
    To conclude, we observe that Claim~\ref{claim:bound_on_the_number_of_connected_sets} implies that for any $x\in L\bZ^d$
    \[
    \sum_{\substack{{\sf P} \ \text{connected} \\ {\sf P} \  \text{touch } x }} e^{(2-R)|{\sf P}|} \,  e^{r|{\sf P}|} = \sum_{\substack{{\sf P} \ \text{connected} \\ {\sf P} \  \text{touch } x }} e^{(2-C_d)|{\sf P}|} < \infty \, , 
    \]
    assuming that $C_d$ is sufficiently large. Furthermore,  
    \[
    \sum_{\substack{{\sf P} \ \text{connected} \\ {\sf P} \  \text{touch } x }}  e^{r|{\sf P}|} \Big( \sum_{ \substack{\Delta \  \text{such that } \\ X(\Delta) = {\sf P}} }  |K_1(\Delta;\La^\prime)| \Big) =  \sum_{\substack{\Delta \ \text{connected} \\ \Delta \  \text{touch } x }} e^{r|X(\Delta)|} |K_1(\Delta;\La^\prime)| \le \| K_1(\cdot , \La^\prime) \|_r \, .
    \]
    In view of~\eqref{eq:bound_on_w_after_everything}, the last two inequalities implies that 
    \[
    \| {\sf w} \|_r \le \| K_1(\cdot , \La^\prime) \|_r + C \, \| \widetilde \rho \|_{C_L} \, ,
    \]
    and the proof now follows by combining Lemma~\ref{lemma:large_field_values_are_exponentially_rare} and Lemma~\ref{lemma:bound_for_the_non_gaussian_weight}. 
    \end{proof}
    
    \section{Large field values}
    \label{sec:large_field_values}
  
    In this section we prove Lemma~\ref{lemma:large_field_values_are_exponentially_rare}. In turn, this would follow from a stability inequality for the Yang--Mills--Higgs action~\eqref{eq:intro_hamiltoniam_with_beta_and_mass}, i.e., showing it is lower bounded by a Gaussian action. 
    \subsection{Stability of the Yang--Mills--Higgs action}
    Recall that $\mathcal{H}_{\La,{\sf s}}(\boldsymbol{\theta}) = \mathcal{H}_{\La}(\boldsymbol{\theta}) + V_{\sf s}(\boldsymbol{\theta})$, where 
    \[
    \mathcal{H}_{\La}(\boldsymbol{\theta}) = \sum_{p\in P(\Lambda)} \big[1 - \cos\big({\sf d} \theta_p\big)\big] + m \sum_{e\in E(\Lambda)} \big[1- \cos\big(\theta_e\big)\big] \, ,
    \]
    and $V_{\sf s}(\boldsymbol{\theta})$, which is given by~\eqref{eq:def_of_V_s}, is the ``sources" term in the action. 
    \begin{claim}\label{claim:YMH_action_is_stable}
        For all $\boldsymbol{\theta} \in [-\pi,\pi]^{E(\La)}$ we have 
        \[
        \mathcal{H}_{\La}(\boldsymbol{\theta}) \ge \frac{1}{10}\sum_{p\in P(\Lambda)} \big({\sf d} \theta_p\big)^2 + \frac{m}{10} \sum_{e\in E(\Lambda)} \big(\theta_e\big)^2 \stackrel{\eqref{eq:def_of_gaussian_action}}{=} \frac{1}{5} \cdot S_{\La}(\boldsymbol{\theta}) \, . 
        \]
        Furthermore, recalling from~\eqref{eq:action_restricted_to_H} that $V_{\Lambda}^{\mathcal{B}}(\boldsymbol{\eta})$ and $S_{\Lambda}^{\mathcal{B}}(\boldsymbol{\eta})$ are the restrictions on $V_{\Lambda}$ and $S_{\Lambda}$ to $\cB$, respectively, we have that 
        \[
        V_{\Lambda}^{\mathcal{B}}(\boldsymbol{\eta}) + S_{\Lambda}^{\mathcal{B}}(\boldsymbol{\eta}) \ge \frac{1}{5} S_{\Lambda}^{\mathcal{B}}(\boldsymbol{\eta}) + V_{\sf s}(\boldsymbol{\eta}) \, ,
        \]
        for all $\boldsymbol{\eta} \in [-\pi,\pi]^{\cE(\cB)}$. 
    \end{claim}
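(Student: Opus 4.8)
The plan is to deduce both inequalities from a single elementary pointwise trigonometric bound, applied separately to each plaquette term and each edge term. The bound I would use is
\[
1-\cos t \;=\; 2\sin^2(t/2) \;\ge\; \frac{2}{\pi^2}\, t^2 \qquad\text{whenever } |t|\le \pi,
\]
which is immediate from the concavity of $\sin$ on $[0,\pi/2]$ (so that $|\sin(t/2)|\ge \tfrac{2}{\pi}\cdot\tfrac{|t|}{2}$ for $|t|\le\pi$). Since $\tfrac{2}{\pi^2}=0.2026\ldots\ge\tfrac15\ge\tfrac1{10}$, this in particular gives $1-\cos t\ge\tfrac15 t^2$ on $[-\pi,\pi]$, with room to spare relative to the target constant $\tfrac1{10}$.

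For the mass term of $\mathcal{H}_{\La}$ nothing more is needed: each coordinate $\theta_e$ lies in $[-\pi,\pi]$, so $m\sum_{e\in E(\La)}(1-\cos\theta_e)\ge\tfrac{m}{5}\sum_e\theta_e^2\ge\tfrac{m}{10}\sum_e\theta_e^2$. For the plaquette term the only subtlety is that the circulation ${\sf d}\theta_p=\theta_{e_1}+\theta_{e_2}-\theta_{e_3}-\theta_{e_4}$ need not lie in $[-\pi,\pi]$; however, it is $\cos({\sf d}\theta_p)$ and not the raw value that enters $\mathcal{H}_{\La}$, and $\cos$ is even and $2\pi$-periodic, so I would replace ${\sf d}\theta_p$ by its representative in $[-\pi,\pi]$ modulo $2\pi$ before invoking the bound. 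Against that representative — which is the value relevant to the (genuinely Gaussian) Proca action~\eqref{eq:def_of_gaussian_action} — one gets $1-\cos({\sf d}\theta_p)\ge\tfrac1{10}({\sf d}\theta_p)^2$, and summing over $p\in P(\La)$ and $e\in E(\La)$ yields
\[
\mathcal{H}_{\La}(\boldsymbol{\theta})\;\ge\;\frac1{10}\sum_{p\in P(\La)}({\sf d}\theta_p)^2+\frac{m}{10}\sum_{e\in E(\La)}\theta_e^2\;=\;\frac15\,S_{\La}(\boldsymbol{\theta}),
\]
which is the first assertion (in fact the same computation gives the slightly stronger constant $\tfrac25$).

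For the ``furthermore'' I would simply track the bookkeeping of the restriction in~\eqref{eq:action_restricted_to_H}. By definition $V_{\Lambda}^{\mathcal{B}}+S_{\Lambda}^{\mathcal{B}}=(V_{\Lambda}+S_{\Lambda})-(V_{\Lambda,\mathcal{G}}+S_{\Lambda,\mathcal{G}})$, and since $g(t)+\tfrac{t^2}{2}=1-\cos t$ reassembles $V_{\Lambda}+S_{\Lambda}$ (see~\eqref{eq:def_of_V}) into $\mathcal{H}_{\La}+V_{\sf s}$ and likewise on the $\mathcal{G}$-part, the difference is exactly $\sum_{p\,:\,E(p)\cap\cE(\mathcal{G})=\emptyset}(1-\cos{\sf d}\theta_p)+m\sum_{e\in\cE(\mathcal{B})\setminus\cE(\mathcal{G})}(1-\cos\theta_e)$, together with the source term $V_{\sf s}(\boldsymbol{\eta})$, which is untouched by this restriction. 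Applying the pointwise bound of the previous paragraph to precisely the plaquettes and edges occurring in $S_{\Lambda}^{\mathcal{B}}$ (rather than to all of $P(\La)$ and $E(\La)$) gives $V_{\Lambda}^{\mathcal{B}}(\boldsymbol{\eta})+S_{\Lambda}^{\mathcal{B}}(\boldsymbol{\eta})-V_{\sf s}(\boldsymbol{\eta})\ge\tfrac15 S_{\Lambda}^{\mathcal{B}}(\boldsymbol{\eta})$ for all $\boldsymbol{\eta}\in[-\pi,\pi]^{\cE(\cB)}$, as claimed. The only genuine point in the whole argument is the handling of the plaquette circulation outside $[-\pi,\pi]$ via $2\pi$-periodicity of $\cos$; everything else is the elementary trig inequality plus keeping track of which terms survive the restriction to $\mathcal{B}$, so I do not expect a real obstacle here.
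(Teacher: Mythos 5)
Your instinct that the circulation ${\sf d}\theta_p$ need not lie in $[-\pi,\pi]$ is exactly the right thing to worry about, but the way you resolve it is where your argument breaks. You replace ${\sf d}\theta_p$ by its representative in $[-\pi,\pi]$ modulo $2\pi$ and assert that this folded value ``is the value relevant to the Proca action~\eqref{eq:def_of_gaussian_action}''. It is not: $S_{\La}$ is by definition the quadratic form in the \emph{raw} circulations $\theta_{e_1}+\theta_{e_2}-\theta_{e_3}-\theta_{e_4}$ --- this is precisely what makes the Proca field Gaussian, and it is this raw quadratic form whose inverse gives the covariance in~\eqref{eq:writting_the_covariance_in_matrix_form}. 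Since the folded value is smaller in absolute value than the raw one, your periodicity trick only yields $1-\cos({\sf d}\theta_p)\ge\tfrac1{10}\,(\text{folded circulation})^2$, which does not imply the inequality against $({\sf d}\theta_p)^2$ that appears in Claim~\ref{claim:YMH_action_is_stable}. Indeed the pointwise per-plaquette bound you would need is false for large circulations: taking $\theta_{e_1}=\theta_{e_2}=\pi$ and $\theta_{e_3}=\theta_{e_4}=0$ gives ${\sf d}\theta_p=2\pi$, so $1-\cos({\sf d}\theta_p)=0$ while $\tfrac1{10}({\sf d}\theta_p)^2\approx 3.9$, and no compensation from the mass terms rescues this when $m$ is small (which is the regime of interest). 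So the plaquette half of your proof, and likewise the plaquette part of your ``furthermore'', does not establish the stated inequality; it proves a strictly weaker statement in which the right-hand side is no longer $\tfrac15 S_{\La}$.

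For comparison, the paper's own proof is the same one-line termwise application of the elementary bound $1-\cos\theta\ge\theta^2/10$ on $[-\pi,\pi]$ (your $2\sin^2(t/2)\ge\tfrac{2}{\pi^2}t^2$ is the same bound with a better constant); it introduces no folding and simply does not address circulations outside $[-\pi,\pi]$, so the large-circulation configurations you spotted are not handled there either. What is actually used downstream is only the mass part of the claim: in the proof of Lemma~\ref{lemma:estimate_on_integral_over_large_fields} one needs $V_{\La}^{\cB}+S_{\La}^{\cB}-V_{\sf s}\ge \tfrac{m}{10}\sum_{e}\eta_e^2$ on the edges of the bad blocks, since $\zeta_{\cB}$ forces one edge with $|\eta_e|\ge T_\beta$ per bad block, and both your argument and the paper's handle the edge terms correctly (each $\theta_e$ genuinely lies in $[-\pi,\pi]$). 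So the honest assessment is: your bookkeeping for the restriction to $\cB$ and your treatment of the mass terms are fine, but the identification of the folded circulation with the argument of $S_{\La}$ is a genuine error, and with the raw circulation the plaquette inequality cannot be obtained by pointwise per-term bounds; the claim should be read (and is only ever used) through its mass part, or under the additional constraint $|{\sf d}\theta_p|\le\pi$.
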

    \begin{proof}
        Both statements in the claim follows immediately from the elementary inequality
        \[
        1- \cos(\theta) \ge \frac{\theta^2}{10}
        \]
        valid for all $\theta\in[-\pi,\pi]$. 
    \end{proof}
    \noindent
    The proof of Lemma~\ref{lemma:large_field_values_are_exponentially_rare} requires us to obtain a bound on the ``large field values" density, which by~\eqref{eq:def_of_rho_B} is given by
    \begin{equation*}
        \rho_{\mathcal{B}}(\boldsymbol{\eta}) = e^{-\beta V_{\Lambda}^{\mathcal{B}}(\boldsymbol{\eta})- \beta S_{\Lambda}^{\mathcal{B}}(\boldsymbol{\eta})} \cdot \zeta_{\mathcal{B}}(\boldsymbol{\eta}) \cdot \frac{Z_{\mathcal{G},\eta}^{\sf G}(\mathbf{0})}{Z_{\Lambda,f}^{\sf G}(\mathbf{0})} \cdot \frac{\Xi_{\mathcal{G},\eta}(\mathbf{0},+)}{\Xi_{\Lambda}(\mathbf{0})} \, .
    \end{equation*}
    The next simple claim will be used several times in what follows. 
    \begin{claim}
        \label{claim:two_sided_inequality_for_ratio_of_xi}
        Let $\Xi_{\mathcal{G},\eta}(\mathbf{0},+)$ and $\Xi_{\Lambda}(\mathbf{0})$ be given by~\eqref{eq:def_of_Xi_0_+} and~\eqref{eq:def_of_Xi_with_interpolation_s}, respectively. Then we have 
        \[
        e^{-C_L|\mathcal{B}_1|} \le \frac{\Xi_{\mathcal{G},\eta}(\mathbf{0},+)}{\Xi_{\Lambda}(\mathbf{0})} \le e^{C_L |\mathcal{B}_1|}
        \]
        for some $C_L >0$ and for large enough $\beta$. 
    \end{claim}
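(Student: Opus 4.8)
The plan is to use that both $\Xi_{\mathcal{G},\eta}(\mathbf{0},+)$ and $\Xi_{\Lambda}(\mathbf{0})$ are \emph{products} of single‑$L$‑block contributions, to cancel the factors coming from blocks far from $\mathcal{B}$, and then to show that each of the $\lesssim|\mathcal{B}_1|$ surviving factors lies between two positive constants depending only on $L,m,d$ once $\beta$ is large.

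\emph{Factorisation and cancellation.} In~\eqref{eq:def_of_gaussian_action_with_0_+} one has $\widetilde\sigma_p=0$ for every plaquette touching a face of $P^\prime(\mathcal{G})$, so $\mu^{\sf G}_{\mathcal{G},\eta,\mathbf{0},+}$ splits as a product over the blocks $v\in\mathcal{G}$ of Proca‑type measures on the $Q(v)$, each carrying on its interface edges the boundary values inherited from $\mathcal{B}$; the cutoff $\chi_{\mathcal{G}}$ and the potential $V_{\Lambda,\mathcal{G}}(\cdot;\mathbf{0},+)$ split along the same partition. Hence $\Xi_{\mathcal{G},\eta}(\mathbf{0},+)=\prod_{v\in\mathcal{G}}\xi_v(\eta)$, where $\xi_v(\eta)$ denotes the corresponding single‑block integral, and likewise (now with $\mathcal{B}=\emptyset$ and free boundary) $\Xi_{\Lambda}(\mathbf{0})=\prod_{v\in\Lambda^\prime}\xi_v(f)$. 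The inherited boundary data plays no role for a block $v\in\mathcal{G}$ that does not neighbour $\mathcal{B}$ on $L\bZ^d$, so those factors coincide and cancel, leaving
\[
\frac{\Xi_{\mathcal{G},\eta}(\mathbf{0},+)}{\Xi_{\Lambda}(\mathbf{0})}=\Bigl(\prod_{v\in\mathcal{B}}\xi_v(f)\Bigr)^{-1}\ \prod_{\substack{v\in\mathcal{G}\\ v\ \mathrm{neighbours}\ \mathcal{B}}}\frac{\xi_v(\eta)}{\xi_v(f)}\,,
\]
a ratio over at most $(2d+1)|\mathcal{B}|\le(2d+1)|\mathcal{B}_1|$ blocks. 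So it suffices to prove that $e^{-C_L}\le\xi_v(f),\ \xi_v(\eta)\le e^{C_L}$ for all large $\beta$, uniformly in $v$ and in the admissible interface data.

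\emph{Per‑block bounds.} For the upper bound, on the support of $\chi_{Q(v)}$ every field element (and every inherited boundary value) has absolute value at most $2T_\beta$, so every argument of $g$ is $O(T_\beta)$; using $|g(t)|\lesssim t^{4}$, $\#\cE(Q(v))\le C_L$ and $|V_{\sf s}|\lesssim\beta^{-10}T_\beta$ we get $\beta\,|V_{Q(v)}|\le C_L\bigl(\beta T_\beta^{4}+\beta^{-9}T_\beta\bigr)\to 0$, hence $e^{-\beta V_{Q(v)}}\chi_{Q(v)}\le 2$ pointwise and $\xi_v(\cdot)\le 2$. For the lower bound, write each field element under the relevant single‑block Proca‑type measure as $\bar\theta_e+\phi_e$, where $\phi$ is a centred Gaussian whose covariance is dominated by $(\beta m)^{-1}I$ (the mass term alone gives this) and $\bar\theta$ is the harmonic extension of the inherited boundary data. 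On the fluctuation event $\{|\phi_e|\le\tfrac14T_\beta\ \text{for all}\ e\in\cE(Q(v))\}$ — whose probability is $\ge\tfrac12$ for $\beta$ large, by a union bound over the $\le C_L$ edges together with the Gaussian tail and $\beta m T_\beta^{2}=m\log^{2(d+2)}\beta\to\infty$ — one has $|\theta_e|\le\|\bar\theta\|_\infty+\tfrac14T_\beta$ and $\beta|V_{Q(v)}|=o(1)$, so if $\|\bar\theta\|_\infty\le\tfrac12T_\beta$ then $\chi_{Q(v)}=1$ there and $\xi_v(\cdot)\ge\tfrac14$. Combining, every factor and reciprocal in the displayed ratio is bounded by a constant, which yields the claimed two‑sided bound after enlarging $C_L$.

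\emph{Main obstacle.} Everything thus reduces to bounding $\|\bar\theta\|_\infty$, the harmonic extension of the data inherited from $\mathcal{B}$, for blocks $v$ adjacent to $\mathcal{B}$: one needs $\|\bar\theta\|_\infty\le\tfrac12T_\beta$ (or, after enlarging the fluctuation event and using that $\chi$ is bounded away from $0$ on a neighbourhood of $[-1,1]$, $\|\bar\theta\|_\infty\le(2-\delta)T_\beta$). This is the step I expect to require the most care, and it is where the small‑field structure is essential: since $v\in\mathcal{G}$ is good and $\mathcal{B}$ is, by construction, exactly the set of bad blocks, the boundary data seen by $Q(v)$ is itself small — of size $\lesssim T_\beta$ — and a stability/energy estimate for the Proca action, in the spirit of Claim~\ref{claim:YMH_action_is_stable}, then controls its harmonic extension by a constant times $T_\beta$; one absorbs that constant by shrinking the numerical prefactor in $T_\beta$ (or by enlarging the event, as above). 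Had the inherited data been of order one instead, the plaquettes straddling $Q(v)$ and $\mathcal{B}$ would force factors $e^{-\Theta(\beta)}$ into $\xi_v(\eta)$ and the lower bound would collapse, so extracting this smallness from the definitions before running the Gaussian estimate is the crux of the argument.
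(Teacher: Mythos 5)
Your factorization--cancellation step, the pointwise upper bound $e^{-\beta V}\chi_{Q}\le 2$, and the Gaussian small-ball argument for free blocks all match what the paper does. The genuine gap is exactly the step you flag as the ``main obstacle'': the lower bound on the boundary-conditioned block factors $\xi_v(\eta)$. You reduce it to showing that the conditional mean $\bar\theta$ (the ``harmonic extension'' of the inherited data) satisfies $\|\bar\theta\|_\infty\le(2-\delta)T_\beta$, but neither of your proposed fixes closes this. You cannot ``shrink the numerical prefactor in $T_\beta$'': the scale $T_\beta$ is fixed once and for all by the partition of unity and by Definition~\ref{def:good_and_bad_blocks}, and it is the same $T_\beta$ sitting inside $\chi_{\cG}$ in the very quantity $\Xi_{\cG,\eta}(\mathbf{0},+)$ you are bounding. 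And a bound of the form $\|\bar\theta\|_\infty\le C_{m,d}\,T_\beta$ --- which is all that a stability/energy or resolvent estimate yields, since the operator ${\sf d}^\ast{\sf d}+m$ acting on $1$-forms obeys no maximum principle and the Neumann-series bound behind Lemma~\ref{lemma:proca_field_is_massive} produces a constant of order $m^{-d}$ --- is not sufficient: if $\bar\theta_e$ exceeds $2T_\beta$ at even one edge, then $\chi(|\theta_e|/T_\beta)\neq 0$ forces a Gaussian fluctuation of size $\gtrsim T_\beta$, whose probability under the block Proca measure is of order $e^{-c\beta T_\beta^2}=e^{-c\log^{2d+4}\beta}$, which is smaller than any $\beta$-independent $e^{-C_L}$; ``enlarging the event'' fails for the same reason. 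So the crux estimate is both unproved and not reachable by the route you sketch.

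Moreover, this difficulty does not arise in the paper's proof, because of how $\Xi_{\cG,\eta}(\mathbf{0},+)$ is built: $\widetilde\sigma_p$ vanishes for every plaquette meeting a face of $P^\prime(\cG)$, and since $\cE(Q(v))$ (hence $\cE(\cG)$) includes the boundary edges of the blocks, this switches off the plaquettes straddling the interfaces with $\cB$ as well. Thus the numerator factors into \emph{free} single-block integrals, the frozen interface edges contributing only factors bounded between $1$ and $C_L$ per block (they are small because the adjacent blocks are good). Consequently the paper never needs a lower bound for an $\eta$-conditioned block: the only per-block input is the two-sided bound $C_L^{-1}\le\int\chi_{Q}\,e^{-\beta V_{Q}}\,{\rm d}\mu_{Q,f}^{\sf G}\le C_L$, obtained from the pointwise inequality $\mathbf{1}_{\{|\theta|\le T_\beta\}}\le\chi(|\theta|/T_\beta)e^{-\beta g(\theta)}\le 2$ together with a Gaussian union bound, applied to the $O(|\cB_1|)$ non-cancelling blocks. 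If you insist on your reading in which the interface plaquettes remain coupled to $\eta$ in $\Xi_{\cG,\eta}(\mathbf{0},+)$, you must actually prove a $(2-\delta)T_\beta$ bound on the conditional mean uniformly over admissible small boundary data, and that is a substantive missing piece of your argument.
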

    \begin{proof}
        Looking at the ratio
        \begin{equation}
        \label{eq:ratio_of_Xi_proof_of_claim_two_sided_inequality}
        \frac{\Xi_{\mathcal{G},\eta}(\mathbf{0},+)}{\Xi_{\Lambda}(\mathbf{0})} \, ,    
        \end{equation}
        we see that both numerator and denominator in~\eqref{eq:ratio_of_Xi_proof_of_claim_two_sided_inequality} factor into a product over the different $L$-blocks in $\mathcal{G}$, and that any $L$-block which does not touch $\mathcal{B}_1$ cancels between the numerator and the denominator exactly. To bound from above and below the ratio between $L$-blocks in~\eqref{eq:ratio_of_Xi_proof_of_claim_two_sided_inequality} which do not cancel, we note the simple inequality 
        \begin{equation}\label{eq:proof_of_claim:two_sided_inequality_for_ratio_of_xi_role_of_chi}
            \mathbf{1}_{\{|\theta| \le T_\beta \}} \le \chi \Big(\frac{|\theta|}{T_\beta}\Big) e^{-\beta g(\theta)} \le 2 \, ,
        \end{equation}
        where $\chi$ is the smooth cut-off function from the partition of unity~\eqref{eq:partition_of_unity_into_large_and_small_fields}. Indeed, the upper bound in~\eqref{eq:proof_of_claim:two_sided_inequality_for_ratio_of_xi_role_of_chi} follows as on the support of $\chi$ we have
        \[
        -\beta g(\theta) = \beta \cdot \big( \cos(\theta) - 1 + \theta^2/2\big) \lesssim \beta T_\beta^{-4} \lesssim \frac{\log^{O(1)}(\beta)}{\beta} \, .
        \]
        On the other hand, the lower bound in~\eqref{eq:proof_of_claim:two_sided_inequality_for_ratio_of_xi_role_of_chi} follows from non-positivity of $g(\theta)$ on the support of $\chi$, and the fact that $\chi\big(\frac{\cdot}{T_\beta}\big)  \equiv 1$ for $|\theta| \le T_\beta$. 
        Letting $Q=Q(v)$ be a symbolic $L$-block in $v\in \La^\prime$, we denote by
        \begin{equation*}
        \chi_Q (\boldsymbol{\theta}) = \prod_{e\in E(Q)} \chi\Big(\frac{|\theta_e|}{T_\beta}\Big) \, .
        \end{equation*}
        The inequality~\eqref{eq:proof_of_claim:two_sided_inequality_for_ratio_of_xi_role_of_chi} immediately implies that 
        \begin{equation} \label{eq:two_sided_bound_for_corrector_in_given_L_block}
        \frac{1}{C_L} \le \int_{\bR^{E(Q)}} \chi_{Q}(\boldsymbol{\theta})  e^{-\beta V_{Q}(\boldsymbol{\theta})} \, {\rm d}\mu_{Q,f}^{\sf G}(\boldsymbol{\theta}) \le C_L    
        \end{equation}
        for all large enough $\beta$, where the underlying Gaussian field in given by Definition~\ref{def:lattice_proca_field} and $V_Q$ is given by~\eqref{eq:def_of_V}. From here, as the only non-trivial contributions to the ratio~\eqref{eq:ratio_of_Xi_proof_of_claim_two_sided_inequality} is given by blocks which touch $\mathcal{B}_1$, we see that 
        \[
        \Big(\frac{1}{C_L}\Big)^{|\mathcal{B}_1|} \le  \frac{\Xi_{\mathcal{G},\eta}(\mathbf{0},+)}{\Xi_{\Lambda}(\mathbf{0})} \le C_L^{|\mathcal{B}_1|} \, , 
        \]
        as desired.
    \end{proof}
    Combining Claim~\ref{claim:YMH_action_is_stable} and Claim~\ref{claim:two_sided_inequality_for_ratio_of_xi}, we derive the main estimate for the ``large field" density, given by the next lemma.
    \begin{lemma}
        \label{lemma:estimate_on_integral_over_large_fields}
        Let $\cB\subset \La^\prime$ be the set of bad $L$-blocks and let $\mathcal{B}_1$ be given by~\eqref{eq:def_of_B_1}. There exists constants $C_L,c_L>0$ so that 
        \[
        \int_{[-\pi,\pi]^{\cE(\mathcal{B})}} \rho_{\mathcal{B}}(\boldsymbol{\eta}) \, {\rm d} \boldsymbol{\eta}_{\mathcal{B}} \le \exp\Big( C_L |\mathcal{B}_1| \cdot (\log \beta) - c_L \beta \, T_\beta^2  \, |\mathcal{B}| \Big) \, ,
        \]
        where $T_\beta$ is given by~\eqref{eq:def_of_T_beta}.
    \end{lemma}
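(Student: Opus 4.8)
The plan is to estimate separately the three factors in the definition~\eqref{eq:def_of_rho_B} of $\rho_{\cB}$. Two of them are essentially free. First, $\Xi_{\cG,\eta}(\mathbf 0,+)/\Xi_{\La}(\mathbf 0)\le e^{C_L|\cB_1|}$ uniformly in the boundary data $\eta$, by Claim~\ref{claim:two_sided_inequality_for_ratio_of_xi}. Second, I would show $Z^{\sf G}_{\cG,\eta}(\mathbf 0)/Z^{\sf G}_{\La,f}(\mathbf 0)\le e^{C_L|\cB_1|\log\beta}$, again uniformly in $\eta\in[-\pi,\pi]^{\cE(\cB)}$. With the decoupling parameter set to $\mathbf 0$, both Gaussian partition functions factor over the connected components of the corresponding action; every $L$-block of $\cG_1$ whose $L\bZ^d$-neighbours all lie in $\cG_1$ is an isolated component carrying exactly the same local quadratic form in the numerator (where it is part of $\cG$) and in the denominator (where $\cB=\emptyset$), and hence cancels, so only the $O(|\cB_1|)$ blocks adjacent to $\cB_1$ survive. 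On those blocks each single-block factor lies between $\beta^{-C_L}$ and $\beta^{C_L}$ up to constants depending only on $L,d,m$, because the mass term forces the relevant quadratic forms to have spectrum in $[m/2,C]$. Finally, imposing the boundary data $\eta$ can only decrease $Z^{\sf G}_{\cG,\eta}(\mathbf 0)$ relative to the zero boundary value (complete the square: the minimum of the nonnegative action subject to the constraint is nonnegative), so the $\eta$-dependence is harmless for the upper bound.

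It remains to control the ``large field'' factor $I_{\cB}:=\int_{[-\pi,\pi]^{\cE(\cB)}}e^{-\beta V^{\cB}_{\La}(\boldsymbol\eta)-\beta S^{\cB}_{\La}(\boldsymbol\eta)}\,\zeta_{\cB}(\boldsymbol\eta)\,\mathrm d\boldsymbol\eta_{\cB}$. By the second part of Claim~\ref{claim:YMH_action_is_stable}, $\beta V^{\cB}_{\La}+\beta S^{\cB}_{\La}\ge\tfrac{\beta}{5}S^{\cB}_{\La}+\beta V_{\sf s}$, and since $\max\{|t_x|,|t_y|\}\le\beta^{-10}$ and $|\eta_e|\le\pi$ we have $|\beta V_{\sf s}(\boldsymbol\eta)|\le 2\pi\beta^{-9}\le 1$; hence the integrand is at most $e\cdot e^{-\frac{\beta}{5}S^{\cB}_{\La}(\boldsymbol\eta)}\zeta_{\cB}(\boldsymbol\eta)$. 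On the support of $\zeta_{\cB}$ the set $\cB$ is exactly the family of bad $L$-blocks, so each $v\in\cB$ possesses an associated edge $e$ with $|\eta_e|\ge T_\beta$; since every edge is associated with at most two $L$-blocks, there are at least $|\cB|/2$ such large edges inside $\cE(\cB)$, and therefore, bounding $S^{\cB}_{\La}$ below by its mass term,
\[
S^{\cB}_{\La}(\boldsymbol\eta)\ \ge\ \frac m2\sum_{e\in\cE(\cB)}\eta_e^2\ \ge\ \frac m4\,T_\beta^2\,|\cB|\qquad\text{on }\operatorname{supp}\zeta_{\cB}.
\]
Thus $e^{-\frac{\beta}{5}S^{\cB}_{\La}(\boldsymbol\eta)}\zeta_{\cB}(\boldsymbol\eta)\le e^{-\frac{m}{20}\beta T_\beta^2|\cB|}$ pointwise, and integrating over the cube $[-\pi,\pi]^{\cE(\cB)}$, whose volume $(2\pi)^{|\cE(\cB)|}$ is at most $e^{C_L|\cB|}$ since $|\cE(\cB)|\le C_L|\cB|$, yields $I_{\cB}\le e^{C_L|\cB|}\,e^{-\frac{m}{20}\beta T_\beta^2|\cB|}$.

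Multiplying the three bounds and using $|\cB|\le|\cB_1|$ together with $\log\beta\ge 1$ (to absorb the additive $O(|\cB_1|)$ terms into $C_L|\cB_1|\log\beta$) gives
\[
\int_{[-\pi,\pi]^{\cE(\cB)}}\rho_{\cB}(\boldsymbol\eta)\,\mathrm d\boldsymbol\eta_{\cB}\ \le\ \exp\!\big(C_L|\cB_1|\log\beta-c_L\beta T_\beta^2|\cB|\big),
\]
with $c_L=\tfrac{m}{20}$, which is the claim. I expect the only genuinely delicate step to be the bound on the Gaussian ratio: one must simultaneously make precise the block-by-block cancellation of the interior components and the monotonicity of the partition function in the boundary data, while keeping every $L$-dependent constant uniform in $\beta$. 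The remaining points are elementary, the one subtlety worth flagging being the combinatorial fact that the indicator inside $\zeta_{\cB}$ forces at least $\gtrsim|\cB|$ large edges in $\cE(\cB)$, which is precisely what turns the qualitative statement ``$\cB$ is bad'' into the quantitative gain $\beta T_\beta^2|\cB|$.
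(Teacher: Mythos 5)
Your proposal is correct and follows essentially the same route as the paper: the stability estimate of Claim~\ref{claim:YMH_action_is_stable} combined with the fact that $\zeta_{\cB}$ forces $\gtrsim|\cB|$ large edges gives the factor $e^{-c_L\beta T_\beta^2|\cB|}$, the block-by-block cancellation gives $Z^{\sf G}_{\cG,\eta}(\mathbf 0)/Z^{\sf G}_{\La,f}(\mathbf 0)\le\beta^{C_L|\cB_1|}$, and the $\Xi$-ratio is handled by Claim~\ref{claim:two_sided_inequality_for_ratio_of_xi}. Your extra details (monotonicity of the Gaussian partition function in the boundary data, explicit control of $\beta V_{\sf s}$ and of the cube volume) are fine refinements of the same argument rather than a different approach.
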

    \begin{proof}
        Claim~\ref{claim:YMH_action_is_stable} implies that  
        \begin{align} \label{eq:proof_of_lemma:estimate_on_integral_over_large_fields_energy_estimate}
          \nonumber 
          \int_{[-\pi,\pi]^{\cE(\mathcal{B})}}  e^{-\beta V_{\Lambda}^{\mathcal{B}}(\boldsymbol{\eta})- \beta S_{\Lambda}^{\mathcal{B}}(\boldsymbol{\eta})} & \zeta_{\mathcal{B}}(\boldsymbol{\eta}) \, {\rm d} \boldsymbol{\eta}_{\mathcal{B}}  \\ &\stackrel{\text{Claim}~\ref{claim:YMH_action_is_stable}}{\le} \int_{[-\pi,\pi]^{\cE(\mathcal{B})}} e^{-\frac{\beta}{5} S_{\La}^{\mathcal{B}}(\boldsymbol{\eta}) - \beta V_{\sf s}(\boldsymbol{\eta}) } \zeta_{\mathcal{B}}(\boldsymbol{\eta}) \, {\rm d} \boldsymbol{\eta}_{\mathcal{B}}  \le e^{-c_L \beta  \, T_\beta^2 \, |\mathcal{B}|} \, .
        \end{align}
        The second inequality in the display above follows from the fact that $\zeta_{\mathcal{B}}$, given by~\eqref{eq:def_of_zeta_B}, forces at least one edge per $L$-block in $\cB$ to have $|\theta_e| \ge T_\beta$. Furthermore, we have
        \begin{equation} \label{eq:proof_of_lemma:estimate_on_integral_over_large_fields_ratio_of_gaussian_partitions}
            \frac{Z_{\mathcal{G},\eta}^{\sf G}(\mathbf{0})}{Z_{\Lambda,f}^{\sf G}(\mathbf{0})} \le \beta^{C_L |\mathcal{B}_1|} \, .            
        \end{equation}
        Indeed, as in the proof of Claim~\ref{claim:two_sided_inequality_for_ratio_of_xi}, both numerator and denominator factor across $L$-blocks, and those which do not touch $\mathcal{B}_1$ cancel exactly. In the numerator, blocks which do not cancel can be bounded by $1$ trivially, while for any such $L$-block $Q$ in the denominator a simple Gaussian computation shows
        \[
        \int_{\bR^{E(Q)}} e^{-\beta S_{Q}(\boldsymbol{\theta})} \, \prod_{e\in E(Q)} {\rm d} \theta_e \gtrsim \beta^{-|E(Q)|/2}  = \beta^{-C_L} \, .
        \]
        Combining these observations yields the bound~\eqref{eq:proof_of_lemma:estimate_on_integral_over_large_fields_ratio_of_gaussian_partitions}. Finally, by combining~\eqref{eq:proof_of_lemma:estimate_on_integral_over_large_fields_energy_estimate},~\eqref{eq:proof_of_lemma:estimate_on_integral_over_large_fields_ratio_of_gaussian_partitions} and the upper bound in Claim~\ref{claim:two_sided_inequality_for_ratio_of_xi} we get that
        \begin{align*}
            \int_{[-\pi,\pi]^{\cE(\mathcal{B})}} \rho_{\mathcal{B}}(\boldsymbol{\eta}) \, {\rm d} \boldsymbol{\eta}_{\mathcal{B}} & = \int_{[-\pi,\pi]^{\cE(\mathcal{B})}}  e^{-\beta V_{\Lambda}^{\mathcal{B}}(\boldsymbol{\eta})- \beta S_{\Lambda}^{\mathcal{B}}(\boldsymbol{\eta})}\zeta_{\mathcal{B}}(\boldsymbol{\eta}) \cdot \frac{Z_{\Lambda,\eta}^{\sf G}(\mathbf{0})}{Z_{\Lambda,f}^{\sf G}(\mathbf{0})} \cdot \frac{\Xi_{\mathcal{G},\eta}(\mathbf{0},+)}{\Xi_{\Lambda}(\mathbf{0})}  \, {\rm d} \boldsymbol{\eta}_{\mathcal{B}}  \\ & \le \beta^{C_L|\mathcal{B}_1|} \exp\Big(- c_L \beta \, T_\beta^2  \, |\mathcal{B}| \Big) \\ &=  \exp\Big( C_L |\mathcal{B}_1| \cdot (\log \beta) - c_L \beta \, T_\beta^2  \, |\mathcal{B}| \Big) \, ,
        \end{align*}
        and the lemma follows. 
    \end{proof}
    \noindent
    Before we conclude this section with the proof of Lemma~\ref{lemma:large_field_values_are_exponentially_rare}, we show that Claim~\ref{claim:bound_on_K_1_emptyset} is in fact a simple corollary of Claim~\ref{claim:two_sided_inequality_for_ratio_of_xi}. 
    \begin{proof}[Proof of Claim~\ref{claim:bound_on_K_1_emptyset}]
        Note that
        \[
        K_1(\emptyset,\mathcal{G}) = \frac{\Xi_{\mathcal{G},\eta}(\mathbf{0})}{\Xi_{\mathcal{G},\eta}(\mathbf{0},+)} 
        \]
        where $\Xi_{\mathcal{G},\eta}(\mathbf{0})$ is given by~\eqref{eq:def_of_Xi_with_interpolation_s} and $\Xi_{\mathcal{G},\eta}(\mathbf{0},+)$ is given by~\eqref{eq:def_of_Xi_0_+}. By Claim~\ref{claim:two_sided_inequality_for_ratio_of_xi} we have 
        \begin{equation} \label{eq:proof_of_claim:bound_on_K_1_emptyset_after_application}
             K_1(\emptyset,\mathcal{G}) = \frac{\Xi_{\La}(\mathbf{0})}{\Xi_{\mathcal{G},\eta}(\mathbf{0},+)} \cdot \frac{\Xi_{\mathcal{G},\eta}(\mathbf{0})}{\Xi_{\La}(\mathbf{0})} \le e^{C_L|\mathcal{B}_1|} \cdot \frac{\Xi_{\mathcal{G},\eta}(\mathbf{0})}{\Xi_{\La}(\mathbf{0})} \, .
        \end{equation}
        It remains to bound the ratio appearing on the right-hand side of~\eqref{eq:proof_of_claim:bound_on_K_1_emptyset_after_application}. Indeed, such a bound follows from a similar argument as in the proof of Claim~\ref{claim:two_sided_inequality_for_ratio_of_xi}. Both numerator and denominator factor across $L$-blocks, and those which do not touch $\mathcal{B}_1$ cancel exactly. To deal with $L$-blocks in the numerator which did not get canceled, we simply apply the upper bound in the inequality~\eqref{eq:proof_of_claim:two_sided_inequality_for_ratio_of_xi_role_of_chi}. On the other hand, the desired lower bound on $L$-blocks in the denominator is trivial, as $\Xi_{\La}(\mathbf{0})$ factors across \emph{all} $L$-blocks in $\La^\prime$. Combining all of the above, we get that
        \[
        \frac{\Xi_{\mathcal{G},\eta}(\mathbf{0})}{\Xi_{\La}(\mathbf{0})}  \le e^{C_L |\mathcal{B}_1|} \, ,
        \]
        which, in view of~\eqref{eq:proof_of_claim:bound_on_K_1_emptyset_after_application}, concludes the proof.
    \end{proof}
    \subsection{Proof of Lemma~\ref{lemma:large_field_values_are_exponentially_rare}}
    Recalling that $T_\beta = \log^{d+2}(\beta)/\sqrt{\beta}$, we see from Lemma~\ref{lemma:estimate_on_integral_over_large_fields} that 
    \begin{align*}
        \widetilde{\rho}(X) &= \sum_{\substack{\mathcal{B} \subset \La^\prime \\ \mathcal{B}_1 = X}} \bigg(\int_{[-\pi,\pi]^{\cE(\mathcal{B})}} \rho_{\mathcal{B}}(\boldsymbol{\eta}) \, {\rm d} \boldsymbol{\eta}_{\mathcal{B}}\bigg) \\ &\le \sum_{\substack{\mathcal{B} \subset \La^\prime \\ \mathcal{B}_1 = X}} \exp\Big( C_L |\mathcal{B}_1| \cdot (\log \beta) - c_L \beta \, T_\beta^2  \, |\mathcal{B}| \Big) \\ &= e^{-\log(\beta)\, |X|} \sum_{\substack{\mathcal{B} \subset \La^\prime \\ \mathcal{B}_1 = X}} \exp\Big( (C_L +1 )|\mathcal{B}_1| \cdot (\log \beta) - c_L \log^{2d+4}(\beta)  \cdot |\mathcal{B}| \, \Big) \, .
    \end{align*}
    By the definition~\eqref{eq:def_of_B_1} of $\mathcal{B}_1$, we have the crude bound
    \[
    |\mathcal{B}_1| \lesssim r_\beta^d \cdot |\mathcal{B}| = \log^{2d}(\beta) \cdot |\mathcal{B}| \, , 
    \]
    and we get that
    \begin{equation*}
        \widetilde{\rho}(X) \le e^{-\log(\beta)|X|} \cdot  \sum_{\mathcal{B}\subset X} \eps_\beta^{|\mathcal{B}|},
    \end{equation*}
    where $\eps_\beta =  \exp\big(\log^{2d+2}( \beta ) - c_L \log^{2d+4}(\beta)  \big)$. Clearly, $\eps_\beta$ can be made arbitrarily small by taking $\beta$ to be sufficiently large. Since $\widetilde f(\mathcal{B}) = \eps_\beta^{|\mathcal{B}|}$ factors across connected components of $\mathcal{B}$, we can combine Claim~\ref{claim:bound_on_function_on_collection_of_subsets} with Claim~\ref{claim:bound_on_the_number_of_connected_sets} and get that 
    \[
    \sum_{\mathcal{B}\subset X} \eps_\beta^{|\mathcal{B}|} \stackrel{\text{Claim}~\ref{claim:bound_on_function_on_collection_of_subsets}}{\le}  \exp\big(|X| \cdot\| \widetilde{f}\|_0\big) \stackrel{\text{Claim~\ref{claim:bound_on_the_number_of_connected_sets}}}{\le}  \exp \big(|X|\big) \, ,
    \]
    for all $\beta$ large enough. Altogether, we got that
    \begin{equation*}
        \widetilde{\rho}(X) \le  e^{-(\log(\beta)-1) \, |X|} \, ,
    \end{equation*}
    from which, in view of Claim~\ref{claim:bound_on_the_number_of_connected_sets}, the desired result follows. 
    \qed 
    
    \section{The Glimm--Jaffe--Spencer expansion}
    \label{sec:the_GJS_expansion_for_gaussian_partition_functions}
 
    The goal of this section is to prove Lemma~\ref{lemma:GJS_weights_are_small}. Along the way, we will also develop some necessary tools to deal with the non-Gaussian correction terms present in our cluster expansion (and, ultimately, prove Lemma~\ref{lemma:bound_for_the_non_gaussian_weight} in Section~\ref{sec:dealing_with_non_gaussian_correction} below). Recall that for $\Ga\subset P^\prime(\mathcal{G}_1)$ we have 
    \[
    W_2(\Ga;\mathcal{G}) = W_1(\Ga;\mathcal{G}) - W_1(\Ga;\La) \, ,
    \]
    where 
    \[
    W_1(\Ga;\mathcal{G}) = \int_{[0,1]^{\Ga}} \partial^\Ga \log Z_{\mathcal{G},\eta}^{\sf G} (\mathbf{s}_\Ga) \, {\rm d} s_{\Ga}\, .
    \]
    Before we move on to bound these weights, and with that prove Lemma~\ref{lemma:GJS_weights_are_small}, we first collect some basic fact about differentiation of Gaussian measures. 
    
    \subsection{Some Gaussian preliminaries}
    Recall that $\mu_{\mathcal{G}, \eta, \mathbf{s}}^{\sf G}$ and $Z_{\mathcal{G},\eta ,\mathbf{s}}^{\sf G}$ are the probability measure and the partition functions, respectively, which correspond to the (interpolated) Gaussian action $S_{\La,\mathcal{G}}(\cdot ; \mathbf{s})$ given by~\eqref{eq:def_of_gaussian_action_with_s}. Throughout this section, the set of good $L$-blocks $\mathcal{G}$ and the boundary conditions $\eta$ remain fixed. Therefore, we will lighten  the notation and denote by $Z_{\mathbf{s}}^{\sf G}$, $S(\cdot ; \mathbf{s})$ and $\mu_{\mathbf{s}}^{\sf G}$ the corresponding partition function, action and probability measure. We denote by $\bE_{\mathbf{s}}^{\sf G}$ the corresponding expectation operator and by 
    \begin{equation}
        \label{eq:covariance_of_interpolated_gaussian}
        \Cov_{\mathbf{s}}(\theta_x,\theta_y) = \bE_{\mathbf{s}}^{\sf G}\big[\theta_x \theta_y \big] - \bE_{\mathbf{s}}^{\sf G}\big[\theta_x \big]\cdot \bE_{\mathbf{s}}^{\sf G}\big[\theta_y \big] 
    \end{equation}
    the covariance for $x,y\in \cE(\mathcal{G})$. 
    \begin{lemma}
        \label{lemma:proca_field_is_massive}
        There exists $c,C>0$ depending only on the dimension $d\ge 2$ so that
        \[
        \big| \Cov_{\mathbf{s}}(\theta_x,\theta_y) \big| \le C \beta^{-1} e^{-c m \, \text{\normalfont dist}(x,y) }
        \]
        uniformly in $\mathbf{s}\in [0,1]^{P^\prime(\mathcal{G}_1)}$, $\beta\ge 1$ and the volume on $\La= \La_n$. 
    \end{lemma}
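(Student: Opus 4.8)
The statement concerns only the Gaussian measure $\mu_{\mathbf s}^{\sf G}$, so the plan is to reduce it to an estimate on the entries of an inverse matrix. By Definition~\ref{def:lattice_proca_field} and its $\mathbf s$‑interpolated version, this measure has density proportional to $\exp(-\beta S_{\La,\mathcal G}(\boldsymbol\theta;\mathbf s))$ in the free edge variables $\{\theta_e:e\in\cE(\mathcal G)\setminus\partial\cE(\mathcal G)\}$, with the remaining edges pinned to $\eta$. Since $S_{\La,\mathcal G}(\cdot;\mathbf s)$ from~\eqref{eq:def_of_gaussian_action_with_s} is a nonnegative quadratic form, the conditional law of the free variables is Gaussian, and its covariance equals $\beta^{-1}$ times the inverse of the Hessian of $S_{\La,\mathcal G}(\cdot;\mathbf s)$ with respect to the free variables (the pinned values only shift the mean). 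Denote this Hessian by $A_{\mathbf s}$, a symmetric matrix indexed by the free edges. Then $\Cov_{\mathbf s}(\theta_x,\theta_y)=\beta^{-1}(A_{\mathbf s}^{-1})_{xy}$, so the $\beta$‑dependence factors out, and it suffices to bound $|(A_{\mathbf s}^{-1})_{xy}|$ uniformly in $\mathbf s\in[0,1]^{P^\prime(\mathcal G_1)}$, in $\eta$, and in the volume.

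Two structural facts about $A_{\mathbf s}$ do all the work. First, $A_{\mathbf s}$ is a principal submatrix of ${\sf d}^{*}\Sigma_{\mathbf s}{\sf d}+mI$ (restricted to $\cE(\mathcal G)$), where $\Sigma_{\mathbf s}$ is the diagonal matrix with entries $\sigma_p(\mathbf s)\in[0,1]$; since ${\sf d}^{*}\Sigma_{\mathbf s}{\sf d}\succeq0$ we get ${\sf d}^{*}\Sigma_{\mathbf s}{\sf d}+mI\succeq mI$, and passing to a principal submatrix preserves this, so $A_{\mathbf s}\succeq mI$. This is a spectral gap of size exactly $m$ above $0$, uniform in everything because the mass term $mI$ is untouched by the interpolation. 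Second, $A_{\mathbf s}$ is a local operator: $(A_{\mathbf s})_{ef}=0$ unless $e$ and $f$ lie on a common plaquette (in which case $\dist(e,f)\le 2$), and each row has $O_d(1)$ nonzero entries, each of size $O_d(1)$; by the Schur test this gives $\|A_{\mathbf s}\|_{\ell^2\to\ell^2}\le C_d$, again uniformly.

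With $c=C_d\ge\|A_{\mathbf s}\|$ set $B=I-c^{-1}A_{\mathbf s}$, so that $0\preceq B\preceq(1-m/c)I$ and $A_{\mathbf s}^{-1}=c^{-1}\sum_{k\ge0}B^{k}$ (absolutely convergent). Since $B$ inherits the locality of $A_{\mathbf s}$, we have $(B^{k})_{xy}=0$ whenever $\dist(x,y)>2k$, so, estimating each surviving term by $|(B^{k})_{xy}|\le\|B^{k}\|\le(1-m/c)^{k}$,
\[
\big|(A_{\mathbf s}^{-1})_{xy}\big|\le c^{-1}\!\!\sum_{k\ge\dist(x,y)/2}\!\!(1-m/c)^{k}=\frac1m\,(1-m/c)^{\lceil\dist(x,y)/2\rceil}\le\frac1m\,e^{-c_d\,m\,\dist(x,y)},\qquad c_d=\tfrac1{2C_d}.
\]
Multiplying by $\beta^{-1}$ gives the lemma, with decay rate $c_d m$ depending only on $d$ (and a prefactor of order $m^{-1}$, harmless under the convention that $O(1)$ constants may depend on $m$). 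Equivalently one may run the Combes--Thomas argument: conjugating $A_{\mathbf s}$ by the diagonal multiplier $\psi(e)\mapsto e^{\lambda\hat v\cdot\pi(e)}\psi(e)$, where $\pi(e)$ is a reference vertex of $e$ and $\hat v$ points from $x$ to $y$, perturbs $A_{\mathbf s}$ by an operator of norm $O_d(\lambda)$; for $\lambda\le c_d m$ the conjugated matrix still satisfies $\operatorname{Re}\langle\bar\psi,\cdot\,\psi\rangle\ge\tfrac m2\|\psi\|^2$, hence has an inverse of norm $\le 2/m$, and undoing the conjugation produces the factor $e^{-\lambda\,\dist(x,y)}$ up to a dimensional constant.

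The reduction, together with the spectral gap and the locality, is where essentially all the content lies; the combinatorial part is routine. The one point to verify with care is that the lower bound $A_{\mathbf s}\succeq mI$ and the norm/locality bounds survive both the restriction to the free edges and the supremum over all interpolation parameters $\mathbf s$ — both do, because a principal submatrix can only shrink the numerical range and the operator norm, and the interpolation acts only on the plaquette coefficients. I do not anticipate a genuine obstacle here; the sole cost of this soft approach is the non‑optimal decay rate (order $m$ rather than order $\sqrt m$), which is nonetheless all that is needed later.
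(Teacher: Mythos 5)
Your proposal is correct and follows essentially the same route as the paper's proof: a uniform spectral sandwich $mI\preceq A_{\mathbf s}\preceq C_d I$ for the (interpolated) quadratic form, a Neumann series for the inverse, and the banded structure of the matrix to kill the low-order terms, yielding geometric decay at rate of order $m$ with a harmless $m^{-1}$ prefactor. The only differences are cosmetic improvements on your side (you treat the pinned boundary edges explicitly via the principal-submatrix reduction, which the paper glosses over, and you mention Combes--Thomas as an alternative), so there is nothing to correct.
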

    \begin{proof}
        By scaling, we may assume that $\beta = 1$ for the proof of this lemma. Let $R_{m,\mathbf{s}} = R$ denote the inverse covariance matrix for the Gaussian measure $\mu_{\mathbf{s}}^{\sf G}$, that is
        \[
        R^{-1}(x,y) = \Cov_{\mathbf{s}}(\theta_x,\theta_y)
        \]
        for $x,y\in \cE(\mathcal{G})$. Recalling that the Gaussian action is given by
        \begin{equation*}
        S_{\Lambda,\mathcal{G}}(\boldsymbol{\theta};\mathbf{s}) = \frac{1}{2} \sum_{\substack{p\in P(\Lambda) \\ E(p)\cap \cE(\mathcal{G}) \not= \emptyset}}  \sigma_p(\mathbf{s}) \big({\sf d}\theta_p\big)^2 + \frac{m}{2}\sum_{e\in \cE(\mathcal{G})} (\theta_e)^2 = \frac{1}{2} \langle  \boldsymbol{\theta} , R \, \boldsymbol{\theta} \rangle \, , 
    \end{equation*}
    with $\sigma_p(\mathbf{s})$ given by~\eqref{eq:def_of_sigma_p}, we see that for all $\psi\in \ell^2(\cE(\mathcal{G}))$ we have
     \begin{equation*}
         \langle \psi , R \psi \rangle  \ge m\| \psi \|^2 \, . 
     \end{equation*}
     Furthermore, as each plaquette induces interaction only among finitely many edges, a simple application of Cauchy-Schwarz shows that
     \begin{equation*}
         \langle \psi , R \psi \rangle \le m \| \psi \|^2 + C_d \| \psi \|^2 \, ,
     \end{equation*}
     for some $C_d>0$ depending only on the lattice dimension, and uniformly over $\mathbf{s}$. We conclude that the spectrum of $R$ must lie in the interval $[m,m+C_d]$. With that in mind, we set
     \[
     S = I - \frac{1}{m+C_d} R
     \]
     where $I$ is the identity matrix on $\ell^2(\cE(\mathcal{G}))$, and note that $S$ is a non-negative definite matrix whose maximal eigenvalue is at most $1 - m/(m+C_d)<1$. We get that
     \[
     (m+C_d) R^{-1} = (I-S)^{-1} = \sum_{k=0}^{\infty} S^k \, ,
     \]
     where the series converges absolutely. Since the entries of the matrix $S$ are non-zero only in a band of finite width around the diagonal, we conclude that for some $c>0$
     \begin{align*}
         \big| \Cov_{\mathbf{s}}(\theta_x,\theta_y) \big| &\le \frac{1}{m+C_d} \sum_{k=0}^\infty |S(x,y)|^k \\ &= \frac{1}{m+C_d} \sum_{k= \lfloor c \, \text{dist}(x,y) \rfloor }^\infty |S(x,y)|^k \\ &\le \frac{1}{m+C_d} \sum_{k= \lfloor c \, \text{dist}(x,y) \rfloor }^\infty \| S\|^k  \\ & \le \frac{1}{m+C_d} \sum_{k= \lfloor c \, \text{dist}(x,y) \rfloor }^\infty  \Big(1-\frac{m}{m+C_d}\Big)^k \, ,
     \end{align*}
     from which the desired bound follows.  
    \end{proof}
    In what follows, we will also need to estimate derivatives of the covariance operator~\eqref{eq:covariance_of_interpolated_gaussian}. Recall that for $\Ga\subset P^\prime(\mathcal{G}_1)$ we defined 
    \[
    \partial^\Ga = \prod_{\gamma\in \Ga} \frac{\partial}{\partial s_\gamma} \, .
    \]
    For $x,y\in \cE(\cG)$ and $\Ga\subset P^\prime(\mathcal{G}_1)$, we denote by $d(x,y;\Ga)$ as the length of the shortest path (in $\bZ^d$) between $x$ and $y$ which visits at least one edge in different faces of $\Ga$. Similarly, we define $d(x,\mathcal{B}_1;\Ga)$ as the shortest path between $x$ and $\mathcal{B}_1$, which visits at least one edge in each different face of $\Ga$. 
    \begin{lemma}
        \label{lemma:estimate_on_derivative_of_covariance}
        There exists constants $c_1,c_2,c_3>0$ depending only on the lattice dimension $d\ge 2$, so that for any $\Ga\subset P^\prime(\mathcal{G}_1)$ we have 
        \[
        \sup_{\mathbf{s}} \big| \partial^\Ga \bE_{\mathbf{s}}^{\sf G}\big[\theta_x \big]\big| \le c_1 \beta^{-1} (c_2 L^d)^{|\Ga|} \,  e^{-c_3 m \, d(x,\mathcal{B}_1;\Ga)} \, , 
        \]
        and
        \[
        \sup_{\mathbf{s}} \big| \partial^\Ga \Cov_{\mathbf{s}}(\theta_x,\theta_y) \big| \le c_1 \beta^{-1} (c_2L^d)^{|\Ga|} \, e^{-c_3 m \, d(x,y ;\Ga)} \, .
        \]
    \end{lemma}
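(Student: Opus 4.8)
The plan is to derive both estimates from the resolvent identity for the $\mathbf{s}$-derivatives of a matrix inverse, fed with the uniform Gaussian decay of Lemma~\ref{lemma:proca_field_is_massive}. By the rescaling $\boldsymbol\theta\mapsto\beta^{-1/2}\boldsymbol\theta$ the whole statement reduces to the case $\beta=1$ (the covariance and the one‑point function both carry an overall $\beta^{-1}$), so it is enough to bound $\partial^{\Gamma}C_{\mathbf{s}}(x,y)$ and $\partial^{\Gamma}\bE^{\sf G}_{\mathbf{s}}[\theta_x]$, where $C_{\mathbf{s}}=R_{m,\mathbf{s}}^{-1}$ is the (Dirichlet) covariance of $\mu^{\sf G}_{\mathcal G,\eta,\mathbf{s}}$. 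The key structural remark is that $R_{m,\mathbf{s}}=mI+\sum_p\sigma_p(\mathbf{s})\,\epsilon_p\epsilon_p^{\top}$ (with $\epsilon_p$ encoding ${\sf d}\theta_p$) is \emph{affine‑linear} in the whole family $\mathbf{s}=(s_{\sf F})_{{\sf F}\in P'(\mathcal G_1)}$, each $\sigma_p$ being either $1$ or a single coordinate $s_{\sf F}$ thanks to the corner convention; hence $R_{\sf F}:=\partial_{s_{\sf F}}R_{m,\mathbf{s}}$ is a fixed matrix, supported on (and of operator norm uniformly bounded, independently of $L$, on) the $O(L^{d-1})$ plaquettes crossing ${\sf F}$, and all higher mixed derivatives of $R_{m,\mathbf{s}}$ vanish. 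Repeated differentiation of $R^{-1}$ then gives the exact identity
\[
\partial^{\Gamma}C_{\mathbf{s}}=(-1)^{|\Gamma|}\sum_{o}C_{\mathbf{s}}R_{\gamma_{o(1)}}C_{\mathbf{s}}R_{\gamma_{o(2)}}\cdots C_{\mathbf{s}}R_{\gamma_{o(k)}}C_{\mathbf{s}},\qquad k=|\Gamma|,
\]
the sum over all $k!$ orderings $o$ of $\Gamma$; similarly $\bE^{\sf G}_{\mathbf{s}}[\theta_x]=-\big(C_{\mathbf{s}}R_{\mathcal I\partial}\,\eta\big)_x$, where $\mathcal I=\cE(\cG)\setminus\partial\cE(\cG)$ and $R_{\mathcal I\partial}$ is \emph{independent of $\mathbf{s}$} (a plaquette coupling an interior edge to $\partial\cE(\cG)$ lies within $O(L)$ of $\cB$, hence $r_\beta\gg1$ blocks away from any face of $P'(\mathcal G_1)$), so $\partial^{\Gamma}\bE^{\sf G}_{\mathbf{s}}[\theta_x]$ has the same ordering expansion with the last $C_{\mathbf{s}}$ replaced by $C_{\mathbf{s}}R_{\mathcal I\partial}\eta$ and $|\eta|\le\pi$.

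The second step bounds a single ordering. Lemma~\ref{lemma:proca_field_is_massive} gives $|C_{\mathbf{s}}(z,w)|\le C e^{-cm\,\dist(z,w)}$ uniformly in $\mathbf{s}$, while each $R_{\gamma}$ moves an edge by $O(1)$ and forces it to lie within $O(1)$ of $\gamma$. Inserting between consecutive $R$'s the orthogonal projection $P_\gamma$ onto the $O(1)$‑neighbourhood of $\gamma$ leaves operator norms $\|P_{\gamma}C_{\mathbf{s}}P_{\gamma'}\|$, which I would control by Schur's test together with the convergence — uniform in $L$ — of the ``excess'' sum $\sum_{z\text{ near }\gamma}e^{-cm(\dist(z,w)-\dist(w,\gamma))}\le C(m,d)$ over the $(d-1)$‑dimensional slab around $\gamma$, getting $\|P_{\gamma}C_{\mathbf{s}}P_{\gamma'}\|\le C(m,d)\,e^{-cm\,\dist(\gamma,\gamma')}$. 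Telescoping the product yields, for each ordering $o$,
\[
\big|\,C_{\mathbf{s}}R_{\gamma_{o(1)}}C_{\mathbf{s}}\cdots R_{\gamma_{o(k)}}C_{\mathbf{s}}(x,y)\,\big|\ \le\ C(m,d)^{k}\exp\!\Big(-cm\big[\dist(x,\gamma_{o(1)})+\textstyle\sum_i\dist(\gamma_{o(i)},\gamma_{o(i+1)})+\dist(\gamma_{o(k)},y)\big]\Big),
\]
with $C(m,d)$ \emph{independent of $L$}. The bracketed quantity is the length $\ell(o)$ of the ``face tour'' $x\to\gamma_{o(1)}\to\cdots\to\gamma_{o(k)}\to y$, and an elementary argument using that the faces are axis‑parallel $L$‑cubes shows that $\ell(o)$ is bounded below by a dimensional multiple of $d(x,y;\Gamma)$ for every $o$ (any face tour realises, up to that constant, the genuine shortest path visiting all faces of $\Gamma$); for the one‑point function the tour ends on $\partial\cE(\cG)\subseteq\cE(\cB_1)$, which is precisely $d(x,\cB_1;\Gamma)$.

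The decisive third step is to sum over the $k!$ orderings without reintroducing a factorial. Writing $D$ for the said dimensional multiple of $d(x,y;\Gamma)$, so that $\ell(o)\ge D$ for all $o$, I would split $e^{-cm\ell(o)}=e^{-\frac{cm}{2}D}e^{-\frac{cm}{2}\ell(o)}$ and bound
\[
\sum_{o}e^{-\frac{cm}{2}\ell(o)}\ \le\ M^{\,k},\qquad M:=\sup_{z\in\bZ^d}\sum_{{\sf F}\in P'(L\bZ^d)}e^{-\frac{cm}{2}\dist(z,{\sf F})}\ <\ \infty,
\]
the inequality obtained by choosing the order one face at a time and at each step dropping the ``face not yet used'' constraint; $M=M(m,d)$ is finite — indeed $M=1+O(e^{-cmL})$ — once $L\ge L_0(m,d)$, since faces are indexed by adjacent pairs of $L$‑blocks. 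Combining the three steps and reinstating $\beta^{-1}$ gives
\[
\sup_{\mathbf{s}}\big|\partial^{\Gamma}\Cov_{\mathbf{s}}(\theta_x,\theta_y)\big|\ \le\ c_1\beta^{-1}\big(C(m,d)M\big)^{|\Gamma|}e^{-\frac{cm}{2}D}\ \le\ c_1\beta^{-1}(c_2L^d)^{|\Gamma|}e^{-c_3m\,d(x,y;\Gamma)},
\]
where in the last step we absorbed $C(m,d)M$ together with the $e^{O(m|\Gamma|)}$ slack from the $O(|\Gamma|)$ corrections in the exponent into $(c_2L^d)^{|\Gamma|}$, using $C(m,d)M\le L$ for $L\ge L_0$; the one‑point estimate follows in the same way after summing the boundary sum $\sum_{w\in\partial\cE(\cG)}$ against $|\eta|\le\pi$ and using $\partial\cE(\cG)\subseteq\cE(\cB_1)$. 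The main obstacle is this ordering sum — ensuring the exponential decay dominates the combinatorial explosion — while a secondary point demanding care is the geometric comparison $\ell(o)\gtrsim d(x,y;\Gamma)$ on the block‑face grid and the bookkeeping of the Dirichlet boundary so that the one‑point bound genuinely features $d(x,\cB_1;\Gamma)$.
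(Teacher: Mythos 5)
Your setup coincides with the paper's: both differentiate the resolvent $\big[mI+{\sf d}^\ast{\sf d}(\mathbf{s})\big]^{-1}$ repeatedly (the affine dependence on $\mathbf{s}$, the locality of $\partial_{s_{\sf F}}R$, and the sum over orderings are exactly the paper's identity), and both use the uniform massive decay of Lemma~\ref{lemma:proca_field_is_massive}; your observation that $R_{\mathcal I\partial}$ is $\mathbf{s}$-independent and your explicit treatment of the ordering sum are, if anything, more careful than the paper on those points. The gap is in your second and third steps: you replace the entrywise (kernel) bound by operator norms $\|P_{\gamma}C_{\mathbf{s}}P_{\gamma'}\|\le C e^{-cm\,\dist(\gamma,\gamma')}$ between face projections, so the decay you retain for a single ordering is $\ell(o)$, a sum of \emph{set-to-set} distances. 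Your key geometric claim that $\ell(o)\ge c_d\, d(x,y;\Ga)$ for every ordering is false, because each face has diameter of order $L$ and the set-to-set metric lets the chain ``slide'' along a face for free. Concretely, in $d=2$ take $\Ga=\{\gamma_1,\gamma_2\}$ to be two faces meeting at a block corner, $x$ an edge next to the far end of $\gamma_1$ and $y$ an edge next to the far end of $\gamma_2$: then $\dist(x,\gamma_1)+\dist(\gamma_1,\gamma_2)+\dist(\gamma_2,y)=O(1)$ while $d(x,y;\Ga)\ge\dist(x,y)\asymp L$. For this pair your telescoped bound gives no decay in $L$ at all, so it cannot imply the stated estimate, and no repair of the ``face tour'' comparison can save it — the information lost by the operator-norm factorization (where on each face the chain enters and exits) is precisely what $d(x,y;\Ga)$ measures.

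The fix is to stay at the level of matrix entries, which is what the paper does: expand $\big(C_{\mathbf{s}}R_{\gamma_{o(1)}}C_{\mathbf{s}}\cdots R_{\gamma_{o(k)}}C_{\mathbf{s}}\big)(x,y)$ as a sum over intermediate edges $q_1,\ldots,q_k$ lying within $O(1)$ of the respective faces and bound each factor by $|C_{\mathbf{s}}(q_j,q_{j+1})|\le Ce^{-cm\,\dist(q_j,q_{j+1})}$. Then the exponent is the length of a genuine chain $x\to q_1\to\cdots\to q_k\to y$ visiting every face of $\Ga$, hence at least $d(x,y;\Ga)-O(|\Ga|)$, and the entropy from the positions on the faces is only $(CL^{d-1})^{|\Ga|}$, absorbed into $(c_2L^d)^{|\Ga|}$; your resummation device for the $k!$ orderings (splitting off half the exponential decay and summing greedily) works verbatim at this kernel level and is a legitimate way to control the combinatorics. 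The one-point bound then follows, as you intended, from the conditioning formula together with $|\eta_e|\le\pi$ and $\partial\cE(\cG)$ lying in $\cB_1$.
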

    \begin{proof}
        We denote by ${\sf d}^\ast {\sf d}(\mathbf{s})$ the linear operator on $\ell^2(\mathcal{G})$ defined via the relation
        \begin{equation} \label{eq:interaction_in_lattice_exterior_calculus_language}
            \langle \boldsymbol{\theta}  , {\sf d}^\ast {\sf d}(\mathbf{s}) \, \boldsymbol{\theta} \rangle = \sum_{\substack{p\in P(\Lambda) \\ E(p)\cap \cE(\mathcal{G}) \not= \emptyset}}  \sigma_p(\mathbf{s}) \big({\sf d}\theta_p\big)^2 \, . 
        \end{equation}
        The reason for this notation originates in the theory of exterior calculus on $\bZ^d$; let us briefly explain this connection. Indeed, following~\cite[Section~10.1]{Cao-Sheffield-FGF-overview} (see also~\cite[Section~2.3]{Frohlich-Spencer} or~\cite[Section~2.2]{Garban-Sepulveda-IMRN2023}) we denote the lattice exterior derivative by ${\sf d}$, see~\cite[Definition~10.7]{Cao-Sheffield-FGF-overview}. In our case, we can think of ${\sf d}$ as a mapping of functions defined on the edges of $\bZ^d$ (i.e., 1-forms) to functions defined on plaquettes of $\bZ^d$ (2-forms). Letting ${\sf d}^\ast$ be the lattice co-differential (given by~\cite[Definition~10.10]{Cao-Sheffield-FGF-overview}), which is simply the adjoint to ${\sf d}$, one can readily check that~\eqref{eq:interaction_in_lattice_exterior_calculus_language} holds for ${\sf d^\ast d} = {\sf d^\ast d}(\mathbf{1})$. Then ${\sf d^\ast d}(\mathbf{s})$ interpolates between this lattice differential operator and the ``decoupled" differential ${\sf d^\ast d}(\mathbf{0})$. In what follows, we will not use this connection with lattice exterior calculus at all, so the reader unfamiliar with this terminology can safely treat~\eqref{eq:interaction_in_lattice_exterior_calculus_language} just as a formal definition.

        With~\eqref{eq:interaction_in_lattice_exterior_calculus_language}, we in fact have
        \begin{equation} \label{eq:writting_the_covariance_in_matrix_form}
            \beta \Cov_{\mathbf{s}}(\theta_x,\theta_y) = \big[m I + {\sf d}^\ast {\sf d}(\mathbf{s}) \big]^{-1} (x,y)
        \end{equation}
         for all $x,y\in \cE(\mathcal{G})$, where $I$ is the identity operator on $\ell^2(\cE(\mathcal{G}))$. Recall that for a matrix-valued smooth function $A(t)$ we have the identity
         \begin{equation} \label{eq:diffrentiating_the_inverse}
             \frac{\partial }{\partial t}A(t)^{-1} = -A(t)^{-1} \cdot \Big(\frac{\partial A(t)}{\partial t}\Big) \cdot  A(t)^{-1} \, .
         \end{equation}
         Hence, given any face ${\sf F} \in P^\prime(\mathcal{G}_1)$, we can differentiate using~\eqref{eq:diffrentiating_the_inverse} and see that
         \begin{equation} \label{eq:derivative_of_covariance_matrix_just_one}
             \frac{\partial}{ \partial s_{\sf F}} \big[m I + {\sf d}^\ast {\sf d}(\mathbf{s}) \big]^{-1} = -\big[m I + {\sf d}^\ast {\sf d}(\mathbf{s}) \big]^{-1} \cdot \Big(\frac{\partial {\sf d}^\ast {\sf d}(\mathbf{s})}{\partial s_{\sf F}}\Big) \cdot \big[m I + {\sf d}^\ast {\sf d}(\mathbf{s}) \big]^{-1} \, .
         \end{equation} 
         We shall denote by 
         \[
         A_{\sf F} = \frac{\partial}{\partial s_{\sf F}} {\sf d}^\ast {\sf d}(\mathbf{s}) \, ,
         \]
         while noting from~\eqref{eq:interaction_in_lattice_exterior_calculus_language} that $A_{\sf F}$ does not depend on $\mathbf{s}$ (see~\eqref{eq:def_of_sigma_p} for the definition of $\sigma_{p}(\mathbf{s})$). Therefore, repeated applications of~\eqref{eq:derivative_of_covariance_matrix_just_one} yields that for $\Ga = \{\gamma_1,\ldots,\gamma_n\} \subset P^\prime(\mathcal{G}_1)$ we have
         \begin{equation} \label{eq:derivative_of_covarinace_all_gamma}
             \partial^\Ga \big[m I + {\sf d}^\ast {\sf d}(\mathbf{s}) \big]^{-1} = (-1)^{n+1} \sum_{\sigma \in S_n}  \Big(\prod_{j=1}^n \big[m I + {\sf d}^\ast {\sf d}(\mathbf{s}) \big]^{-1} \cdot A_{\gamma_{\sigma(j)}} \Big) \cdot  \big[m I + {\sf d}^\ast {\sf d}(\mathbf{s}) \big]^{-1}  \, ,
         \end{equation}
         where $S_n$ is the set of all permutations of $\{1,\ldots,n\}$. Noting that $A_{\sf F}(q,q^\prime) = 0$ for edges $q,q^\prime \in \cE(\cG)$ which are not immediate neighbors on the face ${\sf F}$ or its neighboring faces, we observe from~\eqref{eq:derivative_of_covarinace_all_gamma} that 
         \begin{equation*}
             \big|\partial^\Ga \big[m I + {\sf d}^\ast {\sf d}(\mathbf{s}) \big]^{-1}(x,y)\big| \lesssim C^n \sum_{\substack{w:x\to y \\ w=\{q_1,\ldots,q_n\}}} \prod_{j=1}^{n} \Big| \big[m I + {\sf d}^\ast {\sf d}(\mathbf{s}) \big]^{-1}(q_j,q_{j+1}) \Big| 
         \end{equation*}
         where the sum over $w:x\to y$ consists of all paths $w= \{q_1,\ldots,q_n\}$ on length $n$ from $x$ to $y$ so that each face of $\Ga$ is touched exactly once. The reason for the (harmless) factor $C^n$ in the inequality above is that first we need to consider the  differences of correlations between $q_j$ and $q_{j+1}$, and the bound follows by expanding the product into a sum of size at most $C^n$. As the number of possible paths $w:x\to y$ is at most $(C L^{d-1})^{|\Ga|}$, we see from~\eqref{eq:writting_the_covariance_in_matrix_form} that
         \begin{align*}
            \beta \big|\Cov_{\mathbf{s}}(\theta_x,\theta_y)\big| & \lesssim \beta^{-1} (cL^{d})^{|\Ga|} \sup_{\substack{w:x\to y \\ w =\{q_1,\ldots,q_n\} } }  \, \prod_{j=1}^n\Big| \big[m I + {\sf d}^\ast {\sf d}(\mathbf{s}) \big]^{-1}(q_j,q_{j+1}) \Big| \\ & \stackrel{\text{Lemma}~\ref{lemma:proca_field_is_massive}}{\lesssim}  \beta^{-1} (cL^d)^{|\Ga|} \sup_{\substack{w:x\to y \\ w =\{q_1,\ldots,q_n\} } }  \, \prod_{j=1}^n C e^{-cm \, \text{dist}(q_j,q_{j+1})} \lesssim \beta^{-1} (CL^d)^{|\Ga|} \,  e^{-cm\, d(x,y;\Ga)} \, ,
         \end{align*}
         which is what we wanted to show. The bound for the expectation follows from the formula 
         \[
         \bE_{\mathbf{s}}^{\sf G}\big[\theta_x \big] = C_d \sum_{y\in \partial \mathcal{B}} \Cov_{\mathbf{s}}(\theta_x,\theta_y) \, \theta_y
         \]
         which is a simple consequence of the standard Gaussian conditioning formula. Here $C_d$ is the number of plaquettes which contain a specific edge, so $C_2 = 2$, $C_3 = 4$ and so on, though of course the exact number is not important to us. Altogether, since $|\theta_y| \le T_\beta \le 1$ for all $y\in \partial \mathcal{B}$, we are done. 
    \end{proof}
    Finally, we state and prove the following lemma, which shows how to apply $\partial^\Ga$ to the Gaussian measure $\mu_{\mathbf{s}}^{\sf G}$. It is in fact a simple consequence of the Gaussian integration by parts formula (sometimes referred to as Wick's theorem). For $\Ga\subset P^\prime(\mathcal{G}_1)$, we denote by $\mathcal{P}(\Ga)$ the set of all partitions of $\Ga$. 
    \begin{lemma} \label{lemma:gaussian_integration_by_parts}
        Let $F:\bR^{E(\La)} \to \bR$ be a smooth function which grows, along with all of its finite partial derivatives, at most polynomial at infinity. Then
        \[
        \partial^\Ga \bE_{\mathbf{s}}^{\sf G}\big[ F(\boldsymbol{\theta}) \big] = \sum_{\Pi\in \mathcal{P}(\Ga)} \bE_{\mathbf{s}}^{\sf G}\Big[ \big(\prod_{\pi\in \Pi} D^\pi\big) F(\boldsymbol{\theta}) \Big] \, ,
        \]
        where
        \begin{equation} \label{eq:def_of_D_pi}
         D^\pi F = \frac{1}{2} \sum_{x,y\in \cE(\cG)} \big(\partial^\pi \Cov_{\mathbf{s}} (\theta_x,\theta_y) \big) \, \frac{\partial^2 F}{ \partial \theta_x \partial \theta_y} + \sum_{x\in \cE(\cG)} \big(\partial^\pi \, \bE_{\mathbf{s}}^{\sf G}[\theta_x]\big) \, \frac{\partial F}{\partial \theta_x} \, .
        \end{equation}
    \end{lemma}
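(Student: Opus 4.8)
The plan is to prove the identity by induction on $|\Ga|$, the only genuine computation being the one–parameter case, which is exactly the classical Gaussian integration by parts formula (Wick's theorem).

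First I would set things up: fix $\cG$ and the boundary data $\eta$, freeze $\theta_e=\eta_e$ for $e\in\partial\cE(\cG)$, and view $\mu_{\mathbf s}^{\sf G}$ as an honest nondegenerate Gaussian measure on the free coordinates, with mean vector $\mathbf m(\mathbf s)=\big(\bE_{\mathbf s}^{\sf G}[\theta_x]\big)_x$ and covariance matrix $\Sigma(\mathbf s)=\big(\Cov_{\mathbf s}(\theta_x,\theta_y)\big)_{x,y}$. By~\eqref{eq:writting_the_covariance_in_matrix_form}, $\beta\Sigma(\mathbf s)$ is a principal submatrix of $[mI+{\sf d}^\ast{\sf d}(\mathbf s)]^{-1}$; since $mI+{\sf d}^\ast{\sf d}(\mathbf s)\ge mI>0$ uniformly in $\mathbf s\in[0,1]^{P^\prime(\cG_1)}$, both $\mathbf m(\mathbf s)$ and $\Sigma(\mathbf s)$, along with all their $\mathbf s$–derivatives, are smooth and bounded on the cube. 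Together with the assumed polynomial growth of $F$ and its partial derivatives, this lets one differentiate under the integral sign freely (dominated convergence against a fixed Gaussian weight); checking this is the only analytic point, and it is routine. I note in passing that for a frozen edge $y$ one has $\Cov_{\mathbf s}(\theta_x,\theta_y)=0$ and $\partial^\pi\bE_{\mathbf s}^{\sf G}[\theta_y]=0$ for $|\pi|\ge1$, so the sums in~\eqref{eq:def_of_D_pi} may be taken over all of $\cE(\cG)$ without harm.

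Next I would establish the one–parameter step: for any face $\gamma$ and any smooth, polynomially bounded $G_{\mathbf s}(\boldsymbol\theta)$,
\[
\frac{\partial}{\partial s_\gamma}\bE_{\mathbf s}^{\sf G}[G_{\mathbf s}]
=\bE_{\mathbf s}^{\sf G}\Big[\frac{\partial G_{\mathbf s}}{\partial s_\gamma}\Big]+\bE_{\mathbf s}^{\sf G}\big[D^{\{\gamma\}}G_{\mathbf s}\big].
\]
This follows by differentiating $\bE_{\mathbf s}^{\sf G}[G_{\mathbf s}]=\int G_{\mathbf s}\,{\rm d}\mu_{\mathbf s}^{\sf G}$ and using that the density $p_{\mathbf s}$ integrates to $1$, which produces the extra term $\Cov_{\mathbf s}\big(G_{\mathbf s},\partial_{s_\gamma}\log p_{\mathbf s}\big)$; since $\log p_{\mathbf s}(\boldsymbol\theta)=-\beta S_{\Lambda,\cG}(\boldsymbol\theta;\mathbf s)-\log Z_{\mathcal G,\eta,\mathbf s}^{\sf G}$ and $\sigma_p(\mathbf s)$ is affine in $\mathbf s$, the function $\partial_{s_\gamma}\log p_{\mathbf s}$ is a polynomial of degree at most $2$ in $\boldsymbol\theta$. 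Applying the Gaussian integration by parts identity $\bE_{\mathbf s}^{\sf G}[\theta_x H]=\bE_{\mathbf s}^{\sf G}[\theta_x]\,\bE_{\mathbf s}^{\sf G}[H]+\sum_y\Cov_{\mathbf s}(\theta_x,\theta_y)\,\bE_{\mathbf s}^{\sf G}[\partial_{\theta_y}H]$ twice turns this covariance into the expectation of a $\boldsymbol\theta$–constant–coefficient differential operator of order $\le2$ acting on $G_{\mathbf s}$, and testing that operator on $G_{\mathbf s}=1,\theta_u,\theta_u\theta_v$ against $\partial_{s_\gamma}\mathbf m$ and $\partial_{s_\gamma}\Sigma$ (cf.~\eqref{eq:derivative_of_covariance_matrix_just_one}) identifies it precisely as $D^{\{\gamma\}}$.

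Finally, the induction: the case $\Ga=\emptyset$ is the trivial identity $\bE_{\mathbf s}^{\sf G}[F]=\bE_{\mathbf s}^{\sf G}[F]$. Assuming the claim for $\Ga^\prime$ and writing $\Ga=\Ga^\prime\cup\{\gamma\}$ (so $\gamma\notin\Ga^\prime$), I apply $\partial/\partial s_\gamma$ to the $\Ga^\prime$–identity, using the one–parameter step with $G_{\mathbf s}=\big(\prod_{\pi\in\Pi^\prime}D^\pi\big)F$ for each $\Pi^\prime\in\mathcal P(\Ga^\prime)$. The term $\bE_{\mathbf s}^{\sf G}[D^{\{\gamma\}}G_{\mathbf s}]$ yields the partition $\Pi^\prime\cup\{\{\gamma\}\}$ (the operators $D^\pi$ have constant coefficients in $\boldsymbol\theta$, hence commute and may be reordered at will); the term $\bE_{\mathbf s}^{\sf G}[\partial_{s_\gamma}G_{\mathbf s}]$, by Leibniz, places the derivative on the coefficients of exactly one $D^{\pi_0}$, $\pi_0\in\Pi^\prime$, replacing it by $D^{\pi_0\cup\{\gamma\}}$ since $\partial_{s_\gamma}\partial^{\pi_0}\Sigma=\partial^{\pi_0\cup\{\gamma\}}\Sigma$ (and likewise for $\mathbf m$), and thus yields the partition of $\Ga$ obtained from $\Pi^\prime$ by enlarging $\pi_0$ with $\gamma$. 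Summing over $\Pi^\prime$ and over these two cases produces each $\Pi\in\mathcal P(\Ga)$ exactly once: deleting $\gamma$ from its block in $\Pi$ (and dropping that block if it becomes empty) is the inverse bijection, so the sum telescopes to $\partial^\Ga\bE_{\mathbf s}^{\sf G}[F]=\sum_{\Pi\in\mathcal P(\Ga)}\bE_{\mathbf s}^{\sf G}\big[\big(\prod_{\pi\in\Pi}D^\pi\big)F\big]$. The only step carrying real content is the one–parameter Wick identity; everything else is bijective bookkeeping of partitions, and the one place that demands care — not difficulty — is the uniform domination legitimizing differentiation under the integral, which is exactly what the bound $mI+{\sf d}^\ast{\sf d}(\mathbf s)\ge mI$ together with the polynomial growth of $F$ provide.
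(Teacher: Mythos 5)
Your proposal is correct and follows essentially the same route as the paper: a one-parameter Wick/interpolation identity obtained by differentiating the Gaussian density (whose logarithmic $s_\gamma$-derivative is a quadratic polynomial in $\boldsymbol{\theta}$) and applying Gaussian integration by parts twice, followed by a Leibniz-rule induction whose bookkeeping is exactly the partition structure $\mathcal{P}(\Ga)$. The only cosmetic differences are that you keep the nonzero mean throughout and identify the resulting constant-coefficient second-order operator by testing on $1,\theta_u,\theta_u\theta_v$, whereas the paper reduces to the mean-zero case (recovering the first-order term in $D^\pi$ by shifting back) and carries out the matrix computation explicitly via Isserlis's theorem and the identity for $\partial_t\Sigma_t^{-1}$.
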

    \begin{proof}
        We first reduce the lemma to a simple statement about interpolating Gaussian measures. Indeed, let $\Sigma_0, \Sigma_1$ be two $N\times N$ positive-definite matrices and denote by
        \[
        \Sigma_t = \big(t\Sigma_1^{-1} + (1-t)\Sigma_0^{-1}\big)^{-1} \, , \qquad t\in [0,1]\, .
        \]
        Let $X_t$ be a Gaussian random vector in $\bR^N$ with mean-zero and covariance matrix $\Sigma_t$. Then, for any smooth text function $F$ as in the assumption of the lemma we have
        \begin{equation} \label{eq:interpolating_gaussians_formula}
            \frac{\partial}{ \partial t} \bE\big[F(X_t) \big] = \frac{1}{2} \sum_{i,j=1}^{N} \frac{\partial \Sigma_t}{\partial t} (i,j) \cdot \bE\Big[ \frac{\partial}{\partial x_i} \frac{\partial}{\partial x_j} F(X_t) \Big] \, .
        \end{equation}
        Assuming~\eqref{eq:interpolating_gaussians_formula}, the proof of the lemma follows by a repeated application of Liebnitz rule. We note that the expectation term appears in~\eqref{eq:def_of_D_pi} as we first need to shift the Gaussian measure $\mu_{\mathbf{s}}^{\sf G}$ to have zero mean, apply~\eqref{eq:interpolating_gaussians_formula} and then shift back again with an extra derivative. 

        To prove~\eqref{eq:interpolating_gaussians_formula}, we denote by $p_t(x)$ the density of $X_t$, and by 
        \begin{equation} \label{eq:proof_of_gaussian_IBP_def_of_A}
            A = \Sigma_1^{-1} - \Sigma_0^{-1} = \frac{\partial}{\partial t} \Sigma_t^{-1} \, .
        \end{equation}
        We have
        \[
        \log p_t(x) = C_N + \frac{1}{2} \log \det\big( \Sigma_t^{-1}\big) - \frac{1}{2} \langle x,\Sigma_t^{-1}x \rangle \, , 
        \]
        which implies, via the Jacobi identity, that
        \[
        \frac{\partial}{ \partial t} \log p_t(x) = \frac{1}{2} \Big( \text{Tr}(\Sigma_t \cdot A) - \langle x, A x \rangle   \Big) = \frac{1}{2} \sum_{\ell,k=1}^{N} A (k,\ell) \, \Big( \Sigma_t (k,\ell) - x_k x_\ell  \Big) \, .
        \]
        By our assumptions on $F$, we can apply the Dominated Convergence Theorem and exchange expectation and derivative freely. We get that 
        \begin{align} \label{eq:proof_of_lemma:gaussian_integration_by_parts_after_derivative_of_density} \nonumber 
             \frac{\partial}{ \partial t} \bE\big[F(X_t) \big] &= \int_{\bR^N} F(x) \, \frac{\partial}{ \partial t} p_t(x) \, {\rm d} x \\ \nonumber  &= \int_{\bR^N} F(x) \, p_t(x) \, \frac{\partial}{ \partial t} \log p_t(x) \, {\rm d} x \\ &= \frac{1}{2} \sum_{\ell,k=1}^{N} A(k,\ell ) \cdot \bE\Big[F(X_t) \Big( \Sigma_t (k,\ell) - (X_t)_k \,  (X_t)_\ell  \Big) \Big]
        \end{align}
        Recall that by Isserlis's theorem (sometimes also referred to as Wick's probability theorem, see~\cite[Chapter~1]{Janson-book}) we have
        \begin{equation*}
            \bE\big[F(X_t) (X_t)_k \big] = \sum_{j=1}^{N} \Sigma_t(k,j) \cdot \bE\Big[\frac{\partial}{\partial x_j} F(X_t)\Big] \, .
        \end{equation*}
        Applying this formula twice, we arrive at
        \begin{align*}
            \bE\Big[F(X_t) \, (X_t)_k \,  (X_t)_\ell  \Big] &= \sum_{j=1}^{N} \Sigma_t(k,j) \cdot \bE\Big[\frac{\partial}{\partial x_j} \Big( F(X_t) \,  (X_t)_\ell \Big) \Big] \\ &= \sum_{j=1}^{N} \Sigma_t(k,j) \cdot \Big( \sum_{i=1}^{N} \Sigma_t(i,\ell) \cdot  \bE\Big[\frac{\partial^2}{\partial x_j \partial x_i}  F(X_t)  \Big] + \delta_{j=\ell} \bE\big[F(X_t)\big]  \Big)  \\ &= \sum_{i,j=1}^{N} \Sigma_t(k,j) \, \Sigma_t(i,\ell) \, \bE\Big[\frac{\partial^2}{\partial x_j \partial x_i}  F(X_t)  \Big] + \Sigma_t(k,\ell)\,  \bE\big[F(X_t)\big] \, .
        \end{align*}
        Plugging the above into~\eqref{eq:proof_of_lemma:gaussian_integration_by_parts_after_derivative_of_density}, we get that 
        \begin{align*}
        \frac{\partial}{ \partial t} \bE\big[F(X_t) \big]  &= -  \frac{1}{2} \sum_{k,\ell=1}^{N} A(k,\ell) \cdot \Big( \sum_{i,j=1}^{N} \Sigma_t(k,j) \, \Sigma_t(i,\ell) \, \bE\Big[\frac{\partial^2}{\partial x_j \partial x_i}  F(X_t)  \Big] \Big)  \\ & = -\frac{1}{2} \sum_{i,j=1}^{N}   (\Sigma_t \cdot A \cdot \Sigma_t)(i,j) \cdot \bE\Big[\frac{\partial^2}{\partial x_j \partial x_i}  F(X_t)  \Big] \\  &\stackrel{\eqref{eq:proof_of_gaussian_IBP_def_of_A}}{=}  -\frac{1}{2} \sum_{i,j=1}^{N}   (\Sigma_t \cdot \frac{\partial \Sigma_t^{-1}}{\partial t} \cdot \Sigma_t)(i,j) \cdot \bE\Big[\frac{\partial^2}{\partial x_j \partial x_i}  F(X_t)  \Big] \\ &
         \stackrel{\eqref{eq:diffrentiating_the_inverse}}{=} \frac{1}{2} \sum_{i,j=1}^{N} \frac{\partial\Sigma_t}{\partial t}  (i,j) \cdot \bE\Big[\frac{\partial^2}{\partial x_j \partial x_i}  F(X_t)  \Big] \, .
        \end{align*}
    This proves~\eqref{eq:interpolating_gaussians_formula} and with that, the proof of the lemma is complete.
    \end{proof}
    \subsection{Proof of Lemma~\ref{lemma:GJS_weights_are_small}}
    By Claim~\ref{claim:condition_for_which_W_2_non_zero}, we only need to consider $\Ga\subset P^\prime(\mathcal{G}_1)$ which are connected and touch $\cB_1$ (otherwise, $W_2(\Ga;\cG) = 0$). We fix such $\Ga\subset P^\prime(\mathcal{G}_1)$ and aim to bound
    \[
    W_1(\Ga;\mathcal{G}) = \int_{[0,1]^{\Ga}} \partial^\Ga \log Z_{\mathcal{G},\eta}^{\sf G} (\mathbf{s}_\Ga) \, {\rm d} s_{\Ga}\, . 
    \]
    To do so, fix some face $\gamma \in \Ga$ and note that 
    \begin{equation*}
        \frac{\partial}{\partial s_\gamma} \log Z_{\mathcal{G},\eta}^{\sf G} (\mathbf{s}_\Ga) = -\bE_{\mathbf{s}}^{\sf G}\Big[ \frac{\partial}{\partial s_\gamma} S(\boldsymbol{\theta}; \mathbf{s}) \Big] \, ,
    \end{equation*}
    where we recall that $S(\cdot;\mathbf{s}) = S_{\La,\mathcal{G}}(\cdot;\mathbf{s})$ is the interpolated Gaussian action~\eqref{eq:def_of_gaussian_action_with_s}. In particular, since $S(\cdot;\mathbf{s})$ is linear in $s_\gamma$, the expectation above depends on $\mathbf{s}$ only through the underlying Gaussian measure $\mu_{\mathbf{s}}^{\sf G}$, and not through the integrand. Next, we want to preform the other derivatives from $\widetilde \Ga = \Ga \setminus \{\gamma\}$. For that, Lemma~\ref{lemma:gaussian_integration_by_parts} shows that 
    \begin{equation} \label{eq:proof_of_lemma_on_GJS_weights_after_derivative_tilde_gamma}
        \partial^{\widetilde \Ga} \bE_{\mathbf{s}}^{\sf G}\Big[ \frac{\partial}{\partial s_p} S(\boldsymbol{\theta}; \mathbf{s}) \Big] = \sum_{\Pi\in \mathcal{P}(\widetilde{\Ga})} \bE_{\mathbf{s}}^{\sf G} \Big[ \big(\prod_{\pi \in \Pi} D^\pi \big) \frac{\partial S(\boldsymbol{\theta}, \mathbf{s})}{ \partial s_\gamma} \Big] \, .
    \end{equation}
    Since $S(\boldsymbol{\theta},\mathbf{s})$ is a quadratic function in $\boldsymbol{\theta}$, the only partitions in~\eqref{eq:proof_of_lemma_on_GJS_weights_after_derivative_tilde_gamma} for which the summand is non-zero are those partitions $\Pi\in \mathcal{P}(\widetilde{\Ga})$ with at most two elements. There are at most $2^{|\Ga|}$ such partitions, and we get the bound 
    \[
    \Big|\partial^\Ga \log Z_{\mathcal{G},\eta}^{\sf G} (\mathbf{s}_\Ga) \Big| \le 2^{|\Ga|}  \sup_{\Pi\in \mathcal{P}(\widetilde{\Ga})} \bigg| \bE_{\mathbf{s}}^{\sf G} \Big[ \big(\prod_{\pi \in \Pi} D^\pi \big) \frac{\partial S(\boldsymbol{\theta}, \mathbf{s})}{ \partial s_\gamma} \Big]  \bigg| \, .
    \]
    To proceed, we note that the definition~\eqref{eq:def_of_D_pi} of $D^\pi$ along with Lemma~\ref{lemma:estimate_on_derivative_of_covariance} implies that 
    \begin{equation} \label{eq:proof_of_GJS_lemma_after_application_of_esitmates_on_derivatives}
    \Big|\partial^\Ga \log Z_{\mathcal{G},\eta}^{\sf G} (\mathbf{s}_\Ga) \Big| \lesssim  \beta^{-1} (CL^d)^{|\Ga|} e^{-cm \,  d(\Ga)} \, .    
    \end{equation}
    where $d(\Ga)$ is the shortest lattice path connecting the different faces in $\Ga$. Indeed, regardless of how we partition $\Ga$, the terms $\partial^\pi \Cov_{\mathbf{s}} (\theta_x,\theta_y)$ and $\partial^\pi \, \bE_{\mathbf{s}}^{\sf G}[\theta_x]$ in~\eqref{eq:def_of_D_pi} forces (via Lemma~\ref{lemma:estimate_on_derivative_of_covariance}) to consider paths which visit all different faces of $\pi$, and by multiplying with much consider paths visiting all faces of $\Ga$. All in all, since 
    \[
    d(\Ga) \ge \frac{L}{2d} |\Ga| \, ,
    \]
    we obtain from~\eqref{eq:proof_of_GJS_lemma_after_application_of_esitmates_on_derivatives} that
    \[
    |W_1(\Ga;\mathcal{G})| \lesssim (CL^{d})^{|\Ga|} \, e^{-cm L |\Ga|} 
    \]
    for some (dimension-dependent) constants $c,C>0$. Since
    \[
    C L^{d} e^{-cm L} 
    \]
    can be made as small as we want by taking $L\ge L_0(r,m,d)$ large enough, Claim~\ref{claim:bound_on_the_number_of_connected_sets} together with the bound above shows that 
    \[
    \| W_2(\cdot,\mathcal{G}) \|_r \lesssim \| W_1(\cdot,\mathcal{G}) \|_r  \le C_L \, ,
    \]
    as desired. 
    \qed 
    
    \section{Dealing with the non-Gaussian correction}
    \label{sec:dealing_with_non_gaussian_correction}
    
    It remains to prove Lemma~\ref{lemma:bound_for_the_non_gaussian_weight}, which we do in this section. Recall that for $\Delta \subset P^\prime(\mathcal{G}_1)$ we defined
    \begin{equation*}
        K_1(\Delta;\mathcal{G}) = \frac{1}{\Xi_{\mathcal{G},\eta}(\mathbf{0},+)} \int_{[0,1]^\Delta} \partial^\Delta \Xi_{\mathcal{G},\eta}(\mathbf{s}_{\Delta}) \, {\rm d} \mathbf{s}_\Delta \, ,
    \end{equation*}
    where $\Xi_{\mathcal{G},\eta}(\mathbf{0},+)$ is given by~\eqref{eq:def_of_Xi_0_+} and 
    \begin{equation*}
    \Xi_{\mathcal{G},\eta}(\mathbf{s}) = \int_{\bR^{\cE(\mathcal{G})}} e^{-\beta V_{\Lambda,\mathcal{G}}(\boldsymbol{\theta};\mathbf{s})} \chi_{\mathcal{G}}(\boldsymbol{\theta}) \, {\rm d}\mu_{\mathcal{G},\eta,\mathbf{s}}^{\sf G}  =  \bE_{\mathcal{G},\eta,\mathbf{s}}^{\sf G} \Big[ e^{-\beta V_{\Lambda,\mathcal{G}}(\boldsymbol{\theta};\mathbf{s})} \chi_{\mathcal{G}}(\boldsymbol{\theta})  \Big] \, .
    \end{equation*}
     Further, recall that for a face ${\sf F}\in P^\prime(\cG)$ we have 
     \begin{equation*}
         (\mathbf{s}_\Delta)_{\sf{F}} = \begin{cases}
             s_{\sf F} & \text{if} \ {\sf F} \in \Delta  \, , \\ 0 & \text{otherwise.}
         \end{cases}
     \end{equation*}
    
    \subsection{Reducing to a bound on derivatives of $\Xi$}
    
    Throughout this section, we will denote by 
    \begin{equation}
    \label{eq:def_of_mathcal_X}
        \mathcal{X} = X(\Delta)\cup (\mathcal{B}_1 \setminus \mathcal{B}) \, . 
    \end{equation}
    As we shall see below, $\mathcal{X}$ is the collection of $L$-blocks which contribute to $K_1(\Delta;\mathcal{G})$ non-trivially. This will be made more precise in a moment, but for now let us state a claim telling us how to handle the contribution from $\mathcal{X}$. 
    \begin{claim}\label{claim:estimate_of_derivative_non_gaussian_correction_X}
        Let $\Delta\subset P^\prime(\mathcal{G}_1)$ and let $\mathcal{X}$ be given by~\eqref{eq:def_of_mathcal_X}. Then for all $R \ge 1$ and for all $L$ large enough there exists $C_L>0$ so that  
        \[
        \bigg| \partial^\Delta \int_{\bR^{\cE(\mathcal{X})}} e^{-\beta V_{\Lambda,\mathcal{X}}(\boldsymbol{\theta};\mathbf{s}_\Delta)} \chi_{\mathcal{G}}(\boldsymbol{\theta}) \, {\rm d}\mu_{\mathcal{X},\eta,\mathbf{s}_\Delta}^{\sf G} \bigg| \le \nu(\beta) \, \exp\Big(-R|X(\Delta)| + C_L |\mathcal{B}_1|\Big)  \, , 
        \]
        where
        \[
        \lim_{\beta \to \infty} \nu(\beta) = 0
        \]
        uniformly in $R$ and $\Delta$. 
    \end{claim}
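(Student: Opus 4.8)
The plan is to expand $\partial^\Delta$ of the $\mathcal X$-integral by the Leibniz rule together with the Gaussian integration-by-parts formula (Lemma~\ref{lemma:gaussian_integration_by_parts}), bound each resulting term pointwise on the support of the cutoff $\chi_{\mathcal G}$ using the derivative estimates of Lemma~\ref{lemma:estimate_on_derivative_of_covariance}, and then resum the combinatorial series, using the path-length lower bound to absorb the entropy of the expansion. Throughout one may assume $\Delta\neq\emptyset$: if $\Delta=\emptyset$ then $\mathcal X=\mathcal B_1\setminus\mathcal B$ and $\partial^\Delta$ is the identity, so the integral is trivially $\le\Xi_{\mathcal X,\eta}(\mathbf 0)$, and in the application of the claim this case is handled separately (cf.\ Claim~\ref{claim:bound_on_K_1_emptyset}). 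Also recall that $\mathcal X\subseteq\mathcal G$, so the cutoff $\chi_{\mathcal G}$ does constrain all the integration variables.

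Write $F_{\mathbf s}(\boldsymbol\theta)=e^{-\beta V_{\Lambda,\mathcal X}(\boldsymbol\theta;\mathbf s_\Delta)}\chi_{\mathcal G}(\boldsymbol\theta)$. Since $V_{\Lambda,\mathcal X}$ is linear in each $s_{\sf F}$ (through the weights $\sigma_p$), a single $\partial_{s_{\sf F}}$ either multiplies the integrand by $-\beta\,\partial_{s_{\sf F}}V_{\Lambda,\mathcal X}=-\beta\sum_{p\ \text{crossing}\ {\sf F}}g({\sf d}\theta_p)$ or differentiates the Gaussian measure $\mu^{\sf G}_{\mathcal X,\eta,\mathbf s_\Delta}$. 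Iterating over ${\sf F}\in\Delta$ and invoking Lemma~\ref{lemma:gaussian_integration_by_parts} yields
\[
\partial^\Delta\!\!\int_{\mathbb R^{\mathcal E(\mathcal X)}}\! e^{-\beta V_{\Lambda,\mathcal X}(\cdot;\mathbf s_\Delta)}\chi_{\mathcal G}\,d\mu^{\sf G}_{\mathcal X,\eta,\mathbf s_\Delta}
=\sum_{A\subseteq\Delta}\ \sum_{\Pi\in\mathcal P(\Delta\setminus A)}
\bE^{\sf G}_{\mathcal X,\eta,\mathbf s_\Delta}\!\Big[\Big(\prod_{\pi\in\Pi}D^\pi\Big)\Big(\prod_{{\sf F}\in A}\big(-\beta\,\partial_{s_{\sf F}}V_{\Lambda,\mathcal X}\big)\Big)e^{-\beta V_{\Lambda,\mathcal X}}\chi_{\mathcal G}\Big],
\]
with $D^\pi$ given by~\eqref{eq:def_of_D_pi}. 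The blocks $\pi$ of the partition appear because several $\partial_{s_{\sf F}}$'s may pile onto the coefficients $\partial^{\pi'}\Cov_{\mathbf s}$, $\partial^{\pi'}\bE^{\sf G}_{\mathbf s}[\theta_\cdot]$ of operators $D^{\pi'}$ produced earlier, exactly as in Lemma~\ref{lemma:gaussian_integration_by_parts}.

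Next I would bound a single summand. On the support of $\chi_{\mathcal G}$ one has $|\theta_e|\le 2T_\beta$ for all $e\in\mathcal E(\mathcal X)\subseteq\mathcal E(\mathcal G)$, hence $|g^{(j)}({\sf d}\theta_p)|\lesssim_j T_\beta^{\,4-j}$, and the Gevrey-type bound $\sup|\chi^{(k)}|\le C(k!)^2$ gives $|\partial^k_{\theta_e}\chi_{\mathcal G}|\lesssim (k!)^2 T_\beta^{-k}$. Expanding $\prod_\pi D^\pi$ by Leibniz (each $D^\pi$ spawns at most two $\theta$-derivatives) and using Lemma~\ref{lemma:estimate_on_derivative_of_covariance} to bound each $|\partial^\pi\Cov_{\mathbf s}(\theta_x,\theta_y)|$ and $|\partial^\pi\bE^{\sf G}_{\mathbf s}[\theta_x]|$ by $c_1\beta^{-1}(c_2L^d)^{|\pi|}e^{-c_3m\,d(\,\cdot\,;\pi)}$, one checks that each $\theta$-derivative landing on $e^{-\beta V_{\Lambda,\mathcal X}}$ costs $\lesssim\beta T_\beta^3$ (two of them may combine, together with a $\beta^{-1}$ from their $D^\pi$, into the $\beta\partial^2_\theta V$ term, costing $\lesssim T_\beta^2$), each factor in the $A$-product costs $\lesssim L^{d-1}\beta T_\beta^4$, and each derivative landing on $\chi_{\mathcal G}$ costs $\lesssim T_\beta^{-1}$, again paired with a $\beta^{-1}$. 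Since $\beta T_\beta^4=\log^{O(1)}\beta/\beta$, $\beta T_\beta^3=\log^{O(1)}\beta/\sqrt\beta$ and $\beta^{-1}T_\beta^{-2}=1/\log^{O(1)}\beta$ all tend to $0$, and since $\Delta\neq\emptyset$ forces at least one such vanishing factor into every summand (from $A$ if $A\neq\emptyset$, otherwise from the coefficient-plus-derivatives of some $D^\pi$), one extracts an overall $\nu(\beta)\to0$ that is uniform in $R$ and $\Delta$. The leftover $\bE^{\sf G}_{\mathcal X,\eta,\mathbf s_\Delta}[e^{-\beta V_{\Lambda,\mathcal X}}\chi_{\mathcal G}]$ is bounded by $\Xi_{\mathcal X,\eta}(\mathbf 0,+)$ (replacing $\mathbf s_\Delta$ by $\mathbf 0$ only helps), which factors across $L$-blocks and is $\le C_L^{|X(\Delta)|}e^{C_L|\mathcal B_1|}$ by the argument of Claim~\ref{claim:two_sided_inequality_for_ratio_of_xi} (via $\chi(|\theta|/T_\beta)e^{-\beta g(\theta)}\le 2$).

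The hard part will be the resummation. By Lemma~\ref{lemma:estimate_on_derivative_of_covariance} together with the separation estimate $d(x,y;\pi),\,d(x,\mathcal B_1;\pi)\ge\frac{L}{2d}|\pi|$ for connected $\pi$, the weight attached per face is at most $c_2L^d e^{-c_3mL/(2d)}$, which I make smaller than any prescribed $e^{-R'}$ by taking $L\ge L_0(R,m,d)$. The number of partitions $\Pi$ of $\Delta$ is a priori factorial in $|\Delta|$; the point is that whenever a block $\pi$ splits into several pieces that lie far apart, the path entering $d(\,\cdot\,;\pi)$ must also traverse the gaps, so $\partial^\pi\Cov_{\mathbf s}$ picks up an extra $e^{-c_3mL\cdot(\#\text{gaps})}$, and summing over how far-separated clumps are regrouped into blocks costs only an $O(1)$ factor. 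One is therefore left with a sum over partitions of $\Delta$ into face-\emph{connected} clumps, of which — on the bounded-degree face-adjacency graph — there are only $C^{|\Delta|}$. Summing over $A$ and $\Pi$ then yields a bound of the shape $\nu(\beta)\big(C\,C_L\,e^{-c_3mL/(2d)}\big)^{|\Delta|}e^{C_L|\mathcal B_1|}$, and since $|\Delta|\ge\tfrac12|X(\Delta)|$ this becomes $\nu(\beta)\,e^{-R|X(\Delta)|+C_L|\mathcal B_1|}$ after enlarging $L$ and relabelling constants, which is the claim. I expect this interplay — using the massive decay both to kill the off-diagonal geometry and to beat the (genuinely only exponential, once organized via connected clumps) entropy of the Gaussian-integration-by-parts expansion, while keeping the output in the stated $\exp(-R|X(\Delta)|+C_L|\mathcal B_1|)$ form — to be the main obstacle; the pointwise $T_\beta$-estimates of the third paragraph are routine but tedious.
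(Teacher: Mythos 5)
Your overall architecture matches the paper's: Leibniz splitting of $\partial^\Delta$ into faces that bring down $-\beta\sum_p g({\sf d}\theta_p)$ versus faces that differentiate the Gaussian measure, Gaussian integration by parts (Lemma~\ref{lemma:gaussian_integration_by_parts}), the covariance-derivative bounds of Lemma~\ref{lemma:estimate_on_derivative_of_covariance}, the per-face smallness $\beta T_\beta^4,\ \beta^{-1}T_\beta^{-2}\to0$, and the use of $d(\cdot;\pi)\gtrsim L|\pi|$ to beat the partition entropy for $L$ large. However, there is a genuine gap in your pointwise accounting: when you expand $\prod_{\pi\in\Pi}D^\pi$, distinct blocks $\pi$ may all select the \emph{same} edge pair $(x,y)$ (or edge $z$), so arbitrarily many $\theta$-derivatives can pile up on a single factor $\chi(|\theta_e|/T_\beta)$ or on $e^{-\beta V_{\Lambda,\mathcal X}}$ in the same variable. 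By the very Gevrey bound you quote, a $k$-fold derivative of the cutoff costs $(k!)^2T_\beta^{-k}$, and Cauchy/Fa\`a di Bruno estimates for $e^{-\beta V}$ likewise produce a $k!$; your subsequent bookkeeping ("each derivative costs $\lesssim T_\beta^{-1}$, paired with a $\beta^{-1}$") silently replaces these factorials by a constant per derivative, which is what would make the sum over the edge choices $(x_\ell,y_\ell,z_\ell)$ inside the $D^\pi$'s close. These factorials are super-exponential in the local multiplicity and are not controlled by your "connected clump" resummation, which only addresses the entropy of partitions of $\Delta$, not the multiplicity of derivatives at a fixed edge.

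The paper closes exactly this hole with Claims~\ref{claim:bound_of_expectation_of_derivative_mathcal_D} and~\ref{claim:combinatorial_estimate_on_sum_of_factorials} (the Glimm--Jaffe--Spencer Lemma~10.2 mechanism): because the blocks of $\Pi$ are disjoint sets of faces, at most $O(r^d)$ blocks assigned to a given pair $(x,y)$ can have $d(x,y;\pi)\le r$, hence ${\sf n}(x,y)^{1+1/d}\lesssim\sum_{\pi:\,{\sf p}(\pi)=(x,y)}d(x,y;\pi)$, and the factorials $({\sf n}!)^2$ are absorbed into a small fraction $\eta$ of the exponential decay $e^{-c_3m\,d(\cdot;\pi)}$ already present in Lemma~\ref{lemma:estimate_on_derivative_of_covariance}. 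Without an argument of this type your expansion does not converge as written, so you should add this geometric-versus-factorial tradeoff (or an equivalent multiplicity bound) before the resummation step; the remaining ingredients of your sketch, including the bound on the leftover expectation by $C^{|\mathcal X|}$ via $\chi(|\theta|/T_\beta)e^{-\beta g(\theta)}\le2$, are consistent with the paper's proof (though note the paper bounds that expectation directly by sup-norm estimates rather than by asserting monotonicity in $\mathbf s$, which is not obvious).
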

    Assuming Claim~\ref{claim:estimate_of_derivative_non_gaussian_correction_X} for the moment, we show how it yields the proof of Lemma~\ref{lemma:bound_for_the_non_gaussian_weight}.
    
    \begin{proof}[Proof of Lemma~\ref{lemma:bound_for_the_non_gaussian_weight}]
        We start the proof by making a simple observation about the weights $K_1$: 
        Although it appears from the definition (see~\eqref{eq:def_of_K_1}) that $K_1$ appears as a ratio of `large volume' quantities, it is really a `small volume' quantity, due to many cancellations between the numerator and the denominator. Indeed, since $(s_\Delta)_{\sf F} = 0$ for all faces ${\sf F}\not\in \Delta$, both numerator and denominator in~\eqref{eq:def_of_K_1} factor into contributions from $L$-blocks filling $\mathcal{B}_1\setminus \{L\text{-blocks connected by } \Delta \}$ which cancel out exactly. Therefore, the non-trivial contributions are coming either from $L$-blocks connected by $\Delta$, or from $L$-blocks that touch $\mathcal{B}_1$. 
       Put differently, $L$-blocks that do not touch $\mathcal{X}$ cancel out exactly. In turn, each remaining $L$-block in the denominator is lower bounded by some $c_L>0$, as seen by the inequality~\eqref{eq:two_sided_bound_for_corrector_in_given_L_block}. While this is fine to deal with the $L$-blocks from $\mathcal{B}_1$, to deal also with $L$-blocks from $X(\Delta)$ we need a slightly more accurate lower bound. Indeed, given any symbolic $L$-block $Q=Q(v)$ for $v\in X(\Delta)$, the lower bound in~\eqref{eq:proof_of_claim:two_sided_inequality_for_ratio_of_xi_role_of_chi} implies that 
        \begin{align*}
        \int_{\bR^{E(Q)}} \chi_{Q}(\boldsymbol{\theta})  e^{-\beta V_{Q}(\boldsymbol{\theta})} \, {\rm d}\mu_{Q,f}^{\sf G}(\boldsymbol{\theta}) &\ge \int_{\bR^{E(Q)}} \mathbf{1} \big\{\forall e\in Q \, , \  |\theta_e| \le T_\beta  \big\} \, {\rm d}\mu_{Q,f}^{\sf G}(\boldsymbol{\theta})   \\ &= 1- \int_{\bR^{E(Q)}} \mathbf{1} \big\{\exists e\in Q \, , \  |\theta_e| \ge T_\beta  \big\} \, {\rm d}\mu_{Q,f}^{\sf G}(\boldsymbol{\theta}) \, .
        \end{align*}
        By the union bound
        \[
        \int_{\bR^{E(Q)}} \mathbf{1} \big\{\exists e\in Q \, , \  |\theta_e| \ge T_\beta  \big\} \, {\rm d}\mu_{Q,f}^{\sf G}(\boldsymbol{\theta}) \lesssim |E(Q)| \int_{\beta T_\beta}^\infty e^{-x^2} \, {\rm d}x \le C_L \, e^{-\log^{2d}(\beta)}\le \frac{1}{4} \, ,
        \]
        for all $\beta$ large enough, and we conclude that 
        \[
        \int_{\bR^{E(Q)}} \chi_{Q}(\boldsymbol{\theta})  e^{-\beta V_{Q}(\boldsymbol{\theta})} \, {\rm d}\mu_{Q,f}^{\sf G}(\boldsymbol{\theta}) \ge \frac{1}{2} \, .
        \]
        In view of the cancellations discussed in the first paragraph of the proof, we arrive at the bound 
        \begin{multline*}
        |K_1(\Delta;\mathcal{G})| \\ \le 2^{|\mathcal{B}_1| + |X(\Delta)|} \,  \bigg| \partial^\Delta \int_{\bR^{\cE(\mathcal{X})}} e^{-\beta V_{\Lambda,\mathcal{X}}(\boldsymbol{\theta};\mathbf{s}_\Delta)} \chi_{\mathcal{G}}(\boldsymbol{\theta}) \, {\rm d}\mu_{\mathcal{X},\eta,\mathbf{s}_\Delta}^{\sf G} \bigg|   \stackrel{\text{Claim}~\ref{claim:estimate_of_derivative_non_gaussian_correction_X}}{\le}  \nu(\beta) \,  e^{-(R-2)|X(\Delta)| + C_L |\mathcal{B}_1|} \, .    
        \end{multline*}
        Hence, by taking $C_L^\prime = C_L + 1$ with $C_L$ gives as above and $R\ge R_0(d,r)$ large enough, we get that
        \begin{multline*}
             e^{-C_L^\prime|\mathcal{B}_1|}\sum_{\Delta\in \mathcal{C}(\mathcal{B}_1 ) } |K_1(\Delta;\mathcal{G})| \,  e^{r|X(\Delta)|} \\ \le \nu(\beta) \,  e^{-|\mathcal{B}_1|} \sum_{\Delta\in \mathcal{C}(\mathcal{B}_1)} e^{-(R-2 - r)|X(\Delta)|}   \stackrel{\text{Claim}~\ref{claim:bound_on_the_number_of_connected_sets}}{\lesssim } \nu(\beta) \, e^{-|\mathcal{B}_1|} \, |\mathcal{B}_1| \le \nu(\beta) \xrightarrow{\beta\to\infty} 0 \, ,
        \end{multline*}
        which is what we wanted to show. Finally, to deal with the case $\mathcal{B} = \mathcal{B}_1 = \emptyset$, we note that in this case Claim~\ref{claim:estimate_of_derivative_non_gaussian_correction_X} reads that
        \[
        \bigg| \partial^\Delta \int_{\bR^{\cE(\mathcal{X})}} e^{-\beta V_{\Lambda,\mathcal{X}}(\boldsymbol{\theta};\mathbf{s}_\Delta)} \chi_{\mathcal{G}}(\boldsymbol{\theta}) \, {\rm d}\mu_{\mathcal{X},f,\mathbf{s}_\Delta}^{\sf G} \bigg| \le \nu(\beta) \, e^{-R|X(\Delta)|} \, .
        \]
        Repeating the above argument, we get that
        \[
        |K_1(\Delta; \La^\prime)| \le \nu(\beta) \, 2^{|X(\Delta)|} e^{-R|X(\Delta)|} 
        \]
        and hence, for any $x\in L\bZ^d$ we have 
        \[
        \sum_{\substack{\Delta \ \text{connected} \\ \Delta \  \text{touch } x }} K_1(\Delta;\La^\prime) \,  e^{r|X(\Ga)|}  \le \nu(\beta) \sum_{\substack{\Delta \ \text{connected} \\ \Delta \  \text{touch } x }} e^{-(R-r-\log 2) |X(\Delta)|} \stackrel{\text{Claim}~\ref{claim:bound_on_the_number_of_connected_sets}}{\lesssim } \nu(\beta) \, .
        \]
        That is,
        \[
        \lim_{\beta\to \infty} \| K_1(\cdot; \La^\prime) \|_r = 0
        \]
        and we are done. 
    \end{proof}
    
    \subsection{Estimating the relevant derivatives}
    It remains to prove Claim~\ref{claim:estimate_of_derivative_non_gaussian_correction_X}. The technical challenge here is that the integral we are trying to bound depends on $\mathbf{s}_{\Delta}$ both through the function $V_{\La,\mathcal{X}}$ and through the underlying Gaussian measure. Therefore, when applying the derivatives $\partial^\Delta$ we will need to take this into account. Indeed, we can apply Leibnitz rule, and observe that
    \begin{multline} \label{eq:derivative_of_Xi_wrt_Delta_after_leibnitz}
    \partial^\Delta \int_{\bR^{\cE(\mathcal{X})}} e^{-\beta V_{\Lambda,\mathcal{X}}(\boldsymbol{\theta};\mathbf{s}_\Delta)} \chi_{\mathcal{G}}(\boldsymbol{\theta}) \, {\rm d}\mu_{\mathcal{X},\eta,\mathbf{s}_\Delta}^{\sf G} \\  = \sum_{\Upsilon\subset \Delta} \partial_{{\sf G}}^{\Upsilon}  \, \bE_{\mathcal{X},\eta,\mathbf{s}_{\Delta}}^{\sf G}\Big[ \Big( \prod_{{\sf F} \in \Delta\setminus \Upsilon} \sum_{\substack{p\in P(\La) \\ E(p)\cap {\sf F} \not= \emptyset }}\beta g({\sf d} \theta_p)\Big) \cdot e^{-\beta V_{\La,\mathcal{X}} (\boldsymbol{\theta}; \mathbf{s}_{\Delta})} \, \chi_{\mathcal{G}} (\boldsymbol{\theta}) \Big]
    \end{multline}
    where the symbol $\partial_{\sf G}^{\Upsilon}$ symbolizes taking the partial derivatives parametrized the faces in $\Upsilon\subset \Delta$ only with respect to the underlying Gaussian measure (and not with respect to $V_{\La,\mathcal{X}}$). 
    For pedagogical reasons, we deal with those derivatives in a separate claim. 
    \begin{claim} \label{claim:bound_of_derivative_of_non_gaussian_correction_with_repsect_to_gaussian_measure}
        For any $\Upsilon\subset \Delta$, $\Pi\in \mathcal{P}(\Upsilon)$ and for all $L$ large enough, we have
        \[
        \bigg| \,  \bE_{\mathcal{X},\eta,\mathbf{s}_{\Delta}}^{\sf G} \Big[ \big( \prod_{\pi\in \Pi} D^\pi\big) e^{-\beta V_{\La,\mathcal{X}} (\boldsymbol{\theta}; \mathbf{s}_{\Delta})} \, \chi_{\mathcal{G}} (\boldsymbol{\theta})  \Big] \bigg| \le C^{|\mathcal{X}|} \prod_{\pi\in \Pi} \big(\nu(\beta) \, (L^{d})^{|\pi|} \, e^{-c d(\pi)}\big)\, , 
        \]
        where $D^\pi$ is given by~\eqref{eq:def_of_D_pi}, $d(\pi)$ is the length of the shortest path in $\bZ^d$ which visits all faces of $\pi$ and 
        \[
        \lim_{\beta\to \infty} \nu(\beta) = 0 \, ,
        \]
        uniformly in $\Delta$. 
    \end{claim}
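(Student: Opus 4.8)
The plan is to expand $\big(\prod_{\pi\in\Pi}D^\pi\big)\big(e^{-\beta V_{\La,\mathcal{X}}}\chi_{\mathcal{G}}\big)$ into a large but structured sum and bound each summand. First, unfolding the definition~\eqref{eq:def_of_D_pi} of each $D^\pi$, one writes $\prod_{\pi\in\Pi}D^\pi$ as a sum over assignments that attach to every block $\pi\in\Pi$ either a pair of edges $(x_\pi,y_\pi)\in\cE(\mathcal{X})^2$ with scalar coefficient $\tfrac12\partial^\pi\Cov_{\mathbf{s}}(\theta_{x_\pi},\theta_{y_\pi})$ and the second-order field operator $\partial_{x_\pi}\partial_{y_\pi}$, or a single edge $x_\pi$ with coefficient $\partial^\pi\bE_{\mathbf{s}}^{\sf G}[\theta_{x_\pi}]$ and the first-order operator $\partial_{x_\pi}$. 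The product of the at most $2|\Pi|$ resulting field derivatives is applied to $e^{-\beta V_{\La,\mathcal{X}}}\chi_{\mathcal{G}}$; distributing them by Leibniz between the two factors and using the Fa\`a di Bruno formula on $e^{-\beta V_{\La,\mathcal{X}}}$, each resulting summand takes the schematic form $e^{-\beta V_{\La,\mathcal{X}}}\chi_{\mathcal{G}}$ times a product of \emph{potential factors} $\beta\,\partial^{(\alpha)}V_{\La,\mathcal{X}}$ with $|\alpha|\ge 1$ and \emph{cutoff factors} $T_\beta^{-k}\chi^{(k)}(|\theta_e|/T_\beta)$ (the leftover $\chi$-product being $\le 1$), where the multi-indices $\alpha$ and the orders $k$ run over a partition of the $\le 2|\Pi|$ field derivatives according to which factor they land on.

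Next I bound the ingredients. The scalar coefficients are handled by Lemma~\ref{lemma:estimate_on_derivative_of_covariance} applied with $\Ga=\pi$: both quantities are $\lesssim\beta^{-1}(c_2L^d)^{|\pi|}e^{-c_3 m\,d(x_\pi,y_\pi;\pi)}$, and since a path realizing $d(x_\pi,y_\pi;\pi)$ is in particular one that visits every face of $\pi$ while leaving $x_\pi$ and reaching $y_\pi$, one has $d(x_\pi,y_\pi;\pi)\ge\tfrac13\big(d(\pi)+\text{dist}(x_\pi,X(\pi))+\text{dist}(y_\pi,X(\pi))\big)$, which produces the factor $e^{-cd(\pi)}$ required by the statement and leaves residual exponential decay in the distances of the free edges to $X(\pi)$. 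For the potential factors, on the support of $\chi_{\mathcal{G}}$ every field variable of $\cE(\mathcal{X})$ is $\le 2T_\beta$, so the Taylor expansion of $g$ (using $g^{(j)}(t)=O(|t|^{\max(4-j,0)})$ near $0$ and $g^{(j)}=O(1)$ on bounded sets) gives $|\beta\,\partial^{(\alpha)}V_{\La,\mathcal{X}}|\lesssim\beta\,T_\beta^{\max(4-|\alpha|,0)}$ as long as only bulk plaquettes and edges of $\mathcal{X}$ are involved; the cutoff factors obey $|T_\beta^{-k}\chi^{(k)}|\le C(k!)^2T_\beta^{-k}$ by property (3) of $\chi$. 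Finally the surviving factor is uniformly bounded: since $\beta V_{\La,\mathcal{X}}(\cdot;\mathbf{s})+\beta S_{\La,\mathcal{X}}(\cdot;\mathbf{s})=\beta\sum_p\sigma_p(\mathbf{s})(1-\cos{\sf d}\theta_p)+\beta m\sum_e(1-\cos\theta_e)+\beta V_{\sf s}\ge -1$, we get $e^{-\beta V_{\La,\mathcal{X}}}\chi_{\mathcal{G}}\le 2$ pointwise, so $\bE_{\mathcal{X},\eta,\mathbf{s}}^{\sf G}[\,\cdots\,e^{-\beta V_{\La,\mathcal{X}}}\chi_{\mathcal{G}}]$ is at most $2$ times the supremum over the support of $\chi_{\mathcal{G}}$ of the product of potential and cutoff factors. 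The sums over $x_\pi,y_\pi$ against the residual decay to $X(\pi)$ each contribute at most $C_{m,d}L^{d}$ per face of $\pi$, which together with the $(c_2L^d)^{|\pi|}$ yields the $\prod_\pi(L^d)^{|\pi|}$ in the statement.

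The remaining step is to collect the powers of $\beta$ and $\log\beta$, and this is the main obstacle. With $T_\beta=\log^{d+2}(\beta)/\sqrt\beta$, a potential factor of order $j$ carries $\beta$-power $1-\tfrac12\max(4-j,0)\le 1$ while consuming $j$ of the $\le 2|\Pi|$ available derivatives, and a cutoff factor of order $k$ carries $\beta$-power $\tfrac k2$ while consuming $k$ derivatives; together with the $|\Pi|$ factors $\beta^{-1}$ from the coefficients, a derivative-budget count (each potential factor lowers the net $\beta$-power by at least $1$, each ``mean''-type block of $\Pi$ lowers it by $\tfrac12$) shows the net $\beta$-power is $\le 0$, and strictly negative unless all derivatives land on cutoff factors with no mean-type blocks — in which case the polylogarithmic denominators $\prod_l\log^{-k_l(d+2)}(\beta)$ with $\sum_l k_l=2|\Pi|$ still give smallness $\big(\log^{-2(d+2)}\beta\big)^{|\Pi|}$. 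Two points need care. Boundary plaquettes, whose $\eta$-values are only bounded by $\pi$ so that $g^{(j)}=O(1)$ rather than $O(T_\beta^{4-j})$, would spoil the smallness, but such a factor forces some $x_\pi$ to lie within $O(1)$ of $\partial\cE(\mathcal{X})$; since $X(\pi)\subset\mathcal{G}_1$ sits at coarse-distance exceeding $r_\beta=\log^2\beta$ from $\mathcal{B}$, the residual decay $e^{-c\,m\,\text{dist}(x_\pi,X(\pi))}\le e^{-c\,m\,L\log^2\beta}$ overwhelms any power of $\beta$. Likewise, several blocks $\pi$ may pile derivatives on a common edge, raising a cutoff order (and its factorial); this is absorbed because each such block also carries the coefficient's exponential decay $e^{-c_3 m\,\text{dist}(e,X(\pi))}$, which for $L$ large dominates the factorial growth of $\chi^{(k)}$, and the remaining bounded combinatorial factors (number of factors $\le 2|\Pi|\le C|\mathcal{X}|$, per-block counts $(c_2L^d)^{|\pi|}$) absorb into $C^{|\mathcal{X}|}\prod_\pi(L^d)^{|\pi|}$. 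Assembling everything gives the claim with $\nu(\beta)=\log^{-2(d+2)}\beta\to 0$, uniformly in $\Delta$; the genuine difficulty throughout is precisely verifying that every $\beta$-sized contribution (high-order potential derivatives, large-field boundary plaquettes) and every $\beta^{1/2}/\mathrm{polylog}$-sized cutoff derivative is compensated, which is what pins down the choices of $T_\beta$ and $r_\beta$.
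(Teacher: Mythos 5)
Your skeleton coincides with the paper's: you expand $\prod_{\pi\in\Pi}D^\pi$ into a sum over assignments of covariance-type pairs and mean-type single edges, bound the scalar coefficients by Lemma~\ref{lemma:estimate_on_derivative_of_covariance}, split the distance $d(x,y;\pi)\gtrsim d(\pi)+\dist(x,X(\pi))+\dist(y,X(\pi))$ to extract $e^{-cd(\pi)}$ and to sum over the edge locations (yielding the $(L^d)^{|\pi|}$), exactly as in the paper's final display. Where you genuinely diverge is the middle step: the paper bounds $\mathcal{D}(\mathbf{x},\mathbf{y},\mathbf{z},\boldsymbol{\eps})\big(e^{-\beta V_{\La,\mathcal{X}}}\chi_{\cG}\big)$ by a Leibniz split followed by \emph{Cauchy estimates} on the holomorphic factor $e^{-\beta V_{\La,\mathcal X}}$ in a polydisk of radius $\log\beta/\sqrt{\beta}$ together with the Gevrey bound on $\chi^{(k)}$ (Claim~\ref{claim:bound_of_derivative_of_non_gaussian_correction_with_repsect_to_gaussian_measure}'s auxiliary Claim~\ref{claim:bound_of_expectation_of_derivative_mathcal_D}), and then absorbs the resulting factorials $\prod(\mathsf{n}!)^2$ into a small fraction of the exponential decay via the Glimm--Jaffe--Spencer pigeonhole argument (Claim~\ref{claim:combinatorial_estimate_on_sum_of_factorials}); you instead use a real-variable Fa\`a di Bruno expansion with the bounds $|g^{(j)}|\lesssim T_\beta^{\max(4-j,0)}$ on the support of $\chi_\cG$ and a $\beta$-power/derivative-budget count. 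Your power counting itself is correct (net power $\le -a-N_m/2$, with polylog smallness $\log^{-2(d+2)}\beta$ per block in the borderline all-cutoff case), and it buys a more quantitative picture of where the smallness comes from; the Cauchy-estimate route buys exactly what your route is missing, namely a clean handle on the derivative combinatorics.

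Two concrete gaps. First, your uniform bound on the undifferentiated factor is wrong as justified: from $\beta V_{\La,\mathcal X}+\beta S_{\La,\mathcal X}\ge -1$ you cannot conclude $e^{-\beta V_{\La,\mathcal X}}\chi_{\cG}\le 2$; positivity of $1-\cos$ controls $V+S$, not $V$, and $e^{-\beta V}$ alone is bounded only because \emph{every} $g$-argument occurring in $V_{\La,\mathcal X}$ is $O(T_\beta)$ on the support of $\chi_\cG$, which gives $e^{-\beta V_{\La,\mathcal X}}\le C^{|\mathcal X|}$ rather than $2$. This is not cosmetic: it collides with your own boundary-plaquette discussion --- if plaquettes with $O(1)$ arguments really entered $V_{\La,\mathcal X}$, then $e^{-\beta g({\sf d}\theta_p)}$ would be $e^{c\beta}$ pointwise and the exponential decay you attach to \emph{differentiated} factors cannot repair a blow-up sitting in the undifferentiated exponential; so you must argue (as the paper implicitly does through its conventions on which plaquettes and which small boundary values enter the $\cG$-side) that no such plaquette appears, rather than patch only the differentiated terms. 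Second, the combinatorics: your Fa\`a di Bruno expansion produces, beyond the $(k!)^2$ from $\chi^{(k)}$, a number of admissible set-partitions of the derivatives piled on a common edge or plaquette that grows factorially in the local multiplicities $\mathsf{n}(\cdot)$; this cannot be filed under ``bounded combinatorial factors absorbed into $C^{|\mathcal X|}$''. It has to be beaten by the exponential decay through a counting that uses the non-overlap of the partition elements of $\Pi$ --- precisely the estimate $\mathsf{n}^{1+1/d}\lesssim\sum_\pi d(\cdot;\pi)$ plus Stirling in the paper's Claim~\ref{claim:combinatorial_estimate_on_sum_of_factorials} --- and your one-line assertion that ``the coefficient's decay dominates the factorial growth of $\chi^{(k)}$'' both omits this counting and does not cover the Fa\`a di Bruno factorials at all. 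With those two points supplied, your real-variable route goes through and gives the claim with $\nu(\beta)$ of polylogarithmic size, comparable to the paper's $1/\log\beta$.
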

    
    \begin{proof}[Proof of Claim~\ref{claim:estimate_of_derivative_non_gaussian_correction_X}]
        Recall that the presence of the smooth cut-off $\chi_\mathcal{G}$ forces all $e\in \cE(\mathcal{G})$ to have $|\theta_e| \le 2 T_\beta$, where $T_\beta$ is given by~\eqref{eq:def_of_T_beta}. In particular, for plaquettes $p\in P^\prime(\mathcal{G}_1)$ we have 
        \[
        \beta |g({\sf d}\theta_p)| \lesssim \beta \,  (T_\beta)^4 \le \frac{\log^{4d+8}(\beta)}{\beta} \, .
        \]
        Plugging this bound into~\eqref{eq:derivative_of_Xi_wrt_Delta_after_leibnitz} and applying Lemma~\ref{lemma:gaussian_integration_by_parts} to deal with the $\partial_{\sf G}^{\Upsilon}$ derivatives, we arrive at 
        \begin{multline*}
            \bigg|\partial^\Delta \int_{\bR^{\cE(\mathcal{X})}} e^{-\beta V_{\Lambda,\mathcal{X}}(\boldsymbol{\theta};\mathbf{s}_\Delta)} \chi_{\mathcal{G}}(\boldsymbol{\theta}) \, {\rm d}\mu_{\mathcal{X},\eta,\mathbf{s}_\Delta}^{\sf G} \bigg| \\ \le C^{|\Delta|} \sup_{\Upsilon\subset \Delta} \bigg[ \Big(L^{d-1} \, \frac{\log^{4d+8}(\beta)}{\beta}\Big)^{|\Delta \setminus \Upsilon|} \sum_{\Pi\in \mathcal{P}(\Upsilon)} \bE_{\mathcal{X},\eta,\mathbf{s}_{\Delta}}^{\sf G} \Big[ \big( \prod_{\pi\in \Pi} D^\pi\big) e^{-\beta V_{\La,\mathcal{X}} (\boldsymbol{\theta}; \mathbf{s}_{\Delta})} \, \chi_{\mathcal{G}} (\boldsymbol{\theta})  \Big] \bigg] \, .
        \end{multline*}
        By Claim~\ref{claim:bound_of_derivative_of_non_gaussian_correction_with_repsect_to_gaussian_measure}, we can further bound the inner sum as
        \[
        \bigg|\sum_{\Pi\in \mathcal{P}(\Upsilon)} \bE_{\mathcal{X},\eta,\mathbf{s}_{\Delta}}^{\sf G} \Big[ \big( \prod_{\pi\in \Pi} D^\pi\big) e^{-\beta V_{\La,\mathcal{X}} (\boldsymbol{\theta}; \mathbf{s}_{\Delta})} \, \chi_{\mathcal{G}} (\boldsymbol{\theta})  \Big] \bigg| \le  C^{|\mathcal{X}|} \sum_{\Pi\in \mathcal{P}(\Upsilon)} \prod_{\pi\in \Pi} \Big(\nu(\beta) \, (L^d)^{|\pi|} \, e^{-c d(\pi)}\Big) \, .
        \]
        Since $|\mathcal{X}| \le |X(\Delta)| + |\mathcal{B}_1|$, we can combine both bounds and arrive at the inequality 
        \begin{multline} \label{eq:proof_of_claim:estimate_of_derivative_non_gaussian_correction_X_after_taking_derivatives}
             \bigg|\partial^\Delta \int_{\bR^{\cE(\mathcal{X})}} e^{-\beta V_{\Lambda,\mathcal{X}}(\boldsymbol{\theta};\mathbf{s}_\Delta)} \chi_{\mathcal{G}}(\boldsymbol{\theta}) \, {\rm d}\mu_{\mathcal{X},\eta,\mathbf{s}_\Delta}^{\sf G} \bigg| \\ \le C^{|X(\Delta)| + |\mathcal{B}_1|}  \sup_{\Upsilon\subset \Delta} \bigg[ \Big(L^{d-1} \, \frac{\log^{4d+8}(\beta)}{\beta}\Big)^{|\Delta \setminus \Upsilon|} \sum_{\Pi\in \mathcal{P}(\Upsilon)} \prod_{\pi\in \Pi} \Big(\nu(\beta) \, (L^d)^{|\pi|} \,  e^{-c d(\pi)}\Big) \bigg] \, .
         \end{multline}
         Now, for any $R \ge 1$ we have 
         \begin{multline*}
             \sum_{\Pi\in \mathcal{P}(\Upsilon)} \prod_{\pi\in \Pi} \Big(\nu(\beta) \, (L^d)^{|\pi|} \,  e^{-c d(\pi)}\Big) \\ = e^{-R|\Upsilon|} \sum_{\Pi\in \mathcal{P}(\Upsilon)} \prod_{\pi\in \Pi} \Big(\nu(\beta) \, e^{R|\pi|} \, (L^d)^{|\pi|} \,  e^{-c d(\pi)}\Big)  \\ \stackrel{\text{Claim}~\ref{claim:bound_on_function_on_collection_of_subsets}}{\le} e^{-R|\Upsilon|} \,  \nu(\beta) \,  \exp\Big(|\Upsilon| \cdot  \| (L^d)^{|\cdot|} \, e^{-cd(\cdot)} \|_{R}\Big) \, .
         \end{multline*}
         Since $d(\pi) \ge c L |\pi|$, we can choose $L$ large enough so that $\|(L^d)^{|\cdot|} \,  e^{-cd(\cdot)} \|_{R} \le 1$, and get that 
         \[
           \sum_{\Pi\in \mathcal{P}(\Upsilon)} \prod_{\pi\in \Pi} \Big(\nu(\beta) \, (L^d)^{|\pi|} \,  e^{-c d(\pi)}\Big)  \le e^{-(R-1)|\Upsilon|} \,  \nu(\beta) \, . 
         \]
         Plugging the above into~\eqref{eq:proof_of_claim:estimate_of_derivative_non_gaussian_correction_X_after_taking_derivatives} and taking $\beta$ so large such that 
         \[
         L^{d-1} \, \frac{\log^{4d+8}(\beta)}{\beta} \le e^{-2R} \, ,
         \]
         we obtain
         \[
         \bigg|\partial^\Delta \int_{\bR^{\cE(\mathcal{X})}} e^{-\beta V_{\Lambda,\mathcal{X}}(\boldsymbol{\theta};\mathbf{s}_\Delta)} \chi_{\mathcal{G}}(\boldsymbol{\theta}) \, {\rm d}\mu_{\mathcal{X},\eta,\mathbf{s}_\Delta}^{\sf G} \bigg|  \le \nu(\beta) \, C^{|X(\Delta)| + |\mathcal{B}_1|}   \, e^{-R|\Delta|} 
         \]
         from which the claim follows. 
    \end{proof}
    \begin{proof}[Proof of Claim~\ref{claim:bound_of_derivative_of_non_gaussian_correction_with_repsect_to_gaussian_measure}]
        The first step in the proof is to expand the differential operator $(\prod_{\pi\in \Pi} D^\pi)$, and to do so we introduce some notion. As the partition $\Pi$ remains fixed for the proof we write it as $\Pi = \{\pi_1,\ldots,\pi_N\}$. Furthermore, for edges $x,y,z\in \cE(\mathcal{X})$ we write 
        \[
        {\sf C}(x,y;\pi) = \frac{1}{2} \partial^\pi \Cov_{\mathbf{s}} (\theta_x,\theta_y)  \, , \qquad  {\sf m}(z;\pi) = \partial^\pi \, \bE_{\mathbf{s}}^{\sf G}[\theta_z] \, .
        \]
        Let $\mathbf{x} = (x_1,\ldots,x_N) $ be a collection of edges (that is, with repetitions) from $\cE(\mathcal{X})$, and define $\mathbf{y}, \mathbf{z}$ similarly. 
        By utilizing~\eqref{eq:def_of_D_pi} and expanding the product, we see that 
        \begin{align} \label{eq:expanding_D_pi_general_partition}
            (\prod_{\pi\in \Pi} D^\pi) &\big(e^{-\beta V_{\La,\mathcal{X}}} \, \chi_{\mathcal{G}} \big)  = \sum_{\mathbf{x},\mathbf{y},\mathbf{z} \in \cE(\mathcal{X})^N} \sum_{\boldsymbol{\eps} \in \{0,1\}^N} \\ & \nonumber \Bigg(\bigg[\prod_{\ell=1}^{N} \Big( ({\sf C}(x_\ell,y_\ell;\pi_\ell) )^{\eps_\ell} ({\sf m}(z_\ell;\pi_\ell))^{1-\eps_\ell} \Big) \bigg]\times\bigg[\prod_{\ell=1}^{N} \Big(\frac{\partial^2}{\partial \theta_{x_\ell}\partial \theta_{y_\ell}} \Big)^{\eps_\ell} \cdot \Big(\frac{\partial}{\partial \theta_{ z_{\ell}}}\Big)^{1-\eps_\ell}\bigg] \big(e^{-\beta V_{\La,\mathcal{X}}} \, \chi_{\mathcal{G}} \big) \Bigg) \, .
        \end{align}
        By Lemma~\ref{lemma:estimate_on_derivative_of_covariance}, we can estimate the terms in the first product and obtain
        \begin{align} \label{eq:expanding_D_pi_general_partition_after_estimate_on_covariance}
            \bigg| \,  \bE_{\mathcal{X},\eta,\mathbf{s}_{\Delta}}^{\sf G} &\Big[ \big( \prod_{\pi\in \Pi} D^\pi\big)  e^{-\beta V_{\La,\mathcal{X}} (\boldsymbol{\theta}; \mathbf{s}_{\Delta})}  \, \chi_{\mathcal{G}} (\boldsymbol{\theta})  \Big] \bigg| \le \\ \nonumber  & C^{|\mathcal{X}|} \,  \sum_{\mathbf{x},\mathbf{y},\mathbf{z} \in \cE(\mathcal{X})^N} \sum_{\boldsymbol{\eps} \in \{0,1\}^N} \beta^{-N} \cdot \Big(\prod_{\ell=1}^{N} (c_2 L^d)^{|\pi_\ell|}\Big) \,  e^{-c_3 m \sum_{\ell=1}^{N} \big[\eps_\ell d(x_\ell,y_\ell;\pi_\ell) + (1-\eps_\ell) d(z_\ell, \mathcal{B}_1 ; \pi_\ell)\big] } \\ \nonumber & \qquad \qquad \qquad \qquad \qquad \qquad \times \bE_{\mathcal{X},\eta,\mathbf{s}_{\Delta}}^{\sf G} \big|\mathcal{D}(\mathbf{x},\mathbf{y},\mathbf{z}, \boldsymbol{\eps} ) \big(e^{-\beta V_{\La,\mathcal{X}}} \, \chi_{\mathcal{G}} \big)\big|  \, .
        \end{align}
        where, for a fixed $\mathbf{x},\mathbf{y},\mathbf{z}, \boldsymbol{\eps}$ occurring in the sum~\eqref{eq:expanding_D_pi_general_partition}, we denote by 
        \begin{equation} \label{eq:def_of_mathcal_D_differntial}
           \mathcal{D}(\mathbf{x},\mathbf{y},\mathbf{z}, \boldsymbol{\eps} ) = \prod_{\ell=1}^{N} \Big(\frac{\partial^2}{\partial \theta_{x_\ell}\partial \theta_{y_\ell}} \Big)^{\eps_\ell} \cdot \Big(\frac{\partial}{\partial \theta_{ z_{\ell}}}\Big)^{1-\eps_\ell} \, . 
        \end{equation}
        Furthermore, for a pair of edges $(x,y)$ or a single edge $z$, we will denote by
        \begin{equation*}
            {\sf n}(x,y) \, , \quad {\sf n}(z)\, , 
        \end{equation*}
        the number of times the pair $(x,y)$, or the edge $z$, appear in the differential operator~\eqref{eq:def_of_mathcal_D_differntial}, respectively. 
        \begin{claim} \label{claim:bound_of_expectation_of_derivative_mathcal_D}
            With the notation~\eqref{eq:def_of_mathcal_D_differntial}, we have
            \[
            \bE_{\mathcal{X},\eta,\mathbf{s}_{\Delta}}^{\sf G} \big|\mathcal{D}(\mathbf{x},\mathbf{y},\mathbf{z}, \boldsymbol{\eps} ) \big(e^{-\beta V_{\La,\mathcal{X}}} \, \chi_{\mathcal{G}} \big)\big| \le  C^{|\mathcal{X}|} \Big(\frac{\beta}{\log \beta}\Big)^N  \prod_{(x,y)\in \cE(\mathcal{X})^2} \big({\sf n}(x,y)!\big)^2 \prod_{z\in \cE(\mathcal{X})} \big( {\sf n}(z)!\big)^2 \, ,
            \]
            with the usual agreement that $0! = 1$. 
        \end{claim}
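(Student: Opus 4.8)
The plan is to prove Claim~\ref{claim:bound_of_expectation_of_derivative_mathcal_D} by a direct, if bookkeeping-heavy, computation. The operator $\mathcal{D} = \mathcal{D}(\mathbf{x},\mathbf{y},\mathbf{z},\boldsymbol{\eps})$ is a product of $N$ first- and second-order partial derivatives in the field variables, hence of $M := N + \#\{\ell : \eps_\ell = 1\} \le 2N$ first-order derivatives in total. First I would apply the Leibniz rule to distribute these $M$ derivatives between the two factors $e^{-\beta V_{\La,\mathcal{X}}}$ and $\chi_{\cG}$, and then invoke the exponential (Fa\`a di Bruno) formula for those landing on $e^{-\beta V_{\La,\mathcal{X}}}$: if a set $S$ of derivative slots acts on $e^{-\beta V_{\La,\mathcal{X}}}$, the result is $e^{-\beta V_{\La,\mathcal{X}}}$ times $\sum_{\rho \vdash S} \prod_{B \in \rho}\big(-\beta\, \partial_B V_{\La,\mathcal{X}}\big)$, where $\partial_B$ denotes the mixed partial in the slots of $B$ and $\rho$ runs over set partitions of $S$.

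The elementary inputs are: with $g(t) = 1 - \cos t - t^2/2$ one has $g^{(1)}(t) = \sin t - t$, $g^{(2)}(t) = \cos t - 1$, $g^{(3)}(t) = -\sin t$, and $g^{(k)} \in \{\pm \sin, \pm\cos\}$ for $k \ge 2$, so $|g^{(k)}(t)| \le C |t|^{\max(4-k,0)}$ for $|t| \le 1$; on $\mathrm{supp}\,\chi_{\cG}$ every field value entering $V_{\La,\mathcal{X}}$ or $\chi_{\cG}$ obeys $|\theta_e| \le 2T_\beta$, hence $|{\sf d}\theta_p| \le 8T_\beta < 1$; the source bound reads $\max\{|t_x|,|t_y|\} \le \beta^{-10}$; and property~(3) of $\chi$ gives a factor $\le C(k_e!)^2 T_\beta^{-k_e}$ whenever $k_e$ derivatives strike $\chi(|\theta_e|/T_\beta)$ (evenness of $\chi$ makes $|\theta_e|$ harmless). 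Combining these, a block $B$ of derivatives on $e^{-\beta V_{\La,\mathcal{X}}}$ satisfies $|-\beta\, \partial_B V_{\La,\mathcal{X}}| \le C\beta\, T_\beta^{\max(4-|B|,0)} + \beta^{-9}\mathbf{1}\{|B| = 1\}$, where $C$ absorbs the $O_d(1)$ plaquettes through a fixed edge; in particular every block is $O(\beta)$, singletons are $O(\beta T_\beta^3) = o(1)$, and size-two blocks are $O(\beta T_\beta^2) = O(\mathrm{polylog}\,\beta)$.

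I would then split the estimate in two. Any term in which a derivative strikes $\chi_{\cG}$ carries a factor $\chi^{(k)}(|\theta_e|/T_\beta)$ with $k \ge 1$ for some edge $e$, which is supported on $\{|\theta_e| \ge T_\beta\}$; since Lemma~\ref{lemma:proca_field_is_massive} gives $\mathrm{Var}^{\sf G}(\theta_e) \lesssim \beta^{-1}$, this event has Gaussian probability $\lesssim \exp(-c\log^{2d+4}\beta)$, which is super-polynomially small in $\beta$ and dominates every other factor in the term — a fixed power of $\beta$, a factorial in $M \le 2N$, and a bound $e^{-\beta V_{\La,\mathcal{X}}} \le C^{|\mathcal{X}|}$ valid on $\mathrm{supp}\,\chi_{\cG}$ — so these terms are negligible relative to the target. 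In the main term all $M$ derivatives act on $e^{-\beta V_{\La,\mathcal{X}}}$, and I would bound $\bE^{\sf G}\!\big[ |e^{-\beta V_{\La,\mathcal{X}}}\chi_{\cG}|\, \sum_{\rho}\prod_{B} |\beta\, \partial_B V_{\La,\mathcal{X}}| \big] \le C^{|\mathcal{X}|}\,(\#\,\text{admissible } \rho)\,\max_{\rho}\prod_{B}|\beta\, \partial_B V_{\La,\mathcal{X}}|$, where $C^{|\mathcal{X}|}$ absorbs $\bE^{\sf G}[\chi_{\cG}] \le C^{|\mathcal{X}|}$ (by~\eqref{eq:two_sided_bound_for_corrector_in_given_L_block}) and a partition is admissible when each block is supported at one edge or lies inside one plaquette. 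Counting powers of $\beta$ per block — a block of size $\ge 4$ gives $O(\beta)$ while consuming $\ge 4$ slots, and smaller blocks are $o(1)$ or polylogarithmic — yields $\max_{\rho}\prod_{B}|\beta\, \partial_B V_{\La,\mathcal{X}}| \le \beta^{M/4}\,(\log\beta)^{O(N)}$, which is far below $(\beta/\log\beta)^N$, leaving a factor $\beta^{N/2}$ of slack.

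The step I expect to be the main obstacle is the combinatorial count of admissible partitions. I would decompose the $M$ slots into maximal clusters of slots sitting at pairwise plaquette-close edges; since an admissible block lies in a single edge or a single plaquette, the partition factors over clusters, and within a cluster, which contains only $O_d(1)$ distinct edges, there are at most $B_k \le k!$ partitions of its $k$ slots. Crucially, each edge $e$ is touched within its cluster by only $O_d(1)$ \emph{distinct} factors $\partial^2_{\theta_x \theta_y}$ or $\partial_{\theta_z}$ (a polychromatic block through $e$ is confined to one of the $C_d$ plaquettes containing $e$), so the multinomial inequality applied to ${\sf n}^{*}(e) = \sum(\text{sizes of these } O_d(1) \text{ groups})$ gives ${\sf n}^{*}(e)! \le C^{{\sf n}^{*}(e)}\prod_{\text{groups}}(\text{size}!)$. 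Regrouping the surviving factorials along the original edge-pairs and edges — the group $(x,y)$ contributing an ${\sf n}(x,y)!$ at each of $x$ and $y$, the group $z$ an ${\sf n}(z)!$ at $z$, and $(2{\sf n}(x,x))! \le 4^{{\sf n}(x,x)}({\sf n}(x,x)!)^2$ for diagonal pairs — bounds the number of admissible partitions by $C^{M}\prod_{(x,y)}({\sf n}(x,y)!)^2\prod_z ({\sf n}(z)!)^2$. Since $M \le 2N$, the factor $C^{M} = (C^2)^N$ is absorbed by the $\beta^{N/2}$ of slack once $\beta$ is large, which yields the claim.
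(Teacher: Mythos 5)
Your overall strategy (Leibniz plus Fa\`a di Bruno, explicit bounds on $g^{(k)}$, then a combinatorial count of admissible partitions) is genuinely different from the paper's, which never expands the derivatives of $e^{-\beta V_{\La,\mathcal{X}}}$ at all: the paper exploits that $\boldsymbol{\theta}\mapsto e^{-\beta V_{\La,\mathcal{X}}}$ is holomorphic and applies the Cauchy estimate on a polydisk of radius $\log\beta/\sqrt{\beta}$ (this produces both the factor $(\sqrt{\beta}/\log\beta)^{N_1}$ and the factorials in one stroke), combined with the Gevrey property (3) of $\chi$ for the derivatives landing on $\chi_{\mathcal G}$, after a single Leibniz split. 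Your route could in principle work, but as written it has a genuine gap at the step you yourself flagged as the main obstacle: the bound ``number of admissible partitions $\le C^{M}\prod_{(x,y)}({\sf n}(x,y)!)^2\prod_z({\sf n}(z)!)^2$'' is false. The pairs $(x_\ell,y_\ell)$ come from the covariance term in $D^\pi$, so $x_\ell$ and $y_\ell$ need not be plaquette-close, and arbitrarily many \emph{distinct} factors can place a derivative slot at the same edge. Take $N$ factors $\partial^2_{\theta_e\theta_{y_\ell}}$ with one fixed edge $e$ and $N$ distinct, mutually far edges $y_\ell$: every ${\sf n}(x,y)$ equals $1$, yet the $N$ slots at $e$ all sit on a single edge, so \emph{every} partition of them is admissible and the count is at least the Bell number of $N$, which is super-exponential in $N$ and cannot be absorbed into $C^{M}$. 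Your justification (``each edge is touched within its cluster by only $O_d(1)$ distinct factors'') is a statement about where a block may live, not about how many distinct factors can hit a given edge, and it is exactly here that the regrouping of factorials breaks down: what Fa\`a di Bruno (or Cauchy) genuinely produces are \emph{per-edge} factorials, i.e.\ (number of slots at $e$)$!$, and your argument does not convert these into the per-pair factorials ${\sf n}(x,y)!$ appearing in the claim.

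A second, related problem is the disposal of the terms in which derivatives fall on $\chi_{\mathcal G}$. You argue the Gaussian probability $\exp(-c\log^{2d+4}\beta)$ of the event $\{|\theta_e|\ge T_\beta\}$ ``dominates every other factor,'' but the other factors are not fixed powers of $\beta$: they contain $\beta^{O(M)}$ and factorials such as $(k_e!)^2$ from $\chi^{(k_e)}$, where $M\le 2N$ and $N\le|\Delta|$ grows with the volume of $\La_n$, independently of $\beta$. The claim must hold with constants uniform in $\Delta$ and in the volume (it is summed over all $\mathbf{x},\mathbf{y},\mathbf{z}$ and all $\Delta$ in Claim~\ref{claim:bound_of_derivative_of_non_gaussian_correction_with_repsect_to_gaussian_measure} and Lemma~\ref{lemma:bound_for_the_non_gaussian_weight}), so a factor that is merely super-polynomially small in $\beta$ cannot beat a factorial in $N$ once $N\gg\log^{2d+4}\beta$; these terms must instead be bounded by the same $(\beta/\log\beta)^{N}\times(\text{factorial})$ structure as the main term, which is what the paper's use of property (3) of $\chi$ accomplishes directly. (A minor further slip: a Fa\`a di Bruno block of size $3$ contributes $\beta|g'''|\lesssim\beta T_\beta\sim\sqrt{\beta}\,\mathrm{polylog}\,\beta$, not $o(1)$ or polylog; your $\beta^{M/4}$ power count survives this, but the ``count $\times$ max'' format then leans entirely on the partition count, which is the step that fails.)
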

        \begin{claim} \label{claim:combinatorial_estimate_on_sum_of_factorials}
        For all $\eta>0$ we have 
          \[
          \prod_{(x,y)\in \cE(\mathcal{X})^2} \big({\sf n}(x,y)!\big)^2 \prod_{z\in \cE(\mathcal{X})} \big({\sf n}(z)! \big)^2 \le C_\eta^{|\mathcal{X}|} \exp\Big(\eta \sum_{\ell=1}^{N} \big[\eps_\ell d(x_\ell,y_\ell;\pi_\ell) + (1-\eps_\ell) d(z_\ell, \mathcal{B}_1 ; \pi_\ell)\big]  \Big) \, .
          \]
        \end{claim}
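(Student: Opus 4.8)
The plan is to estimate the product of factorials one ``label'' at a time, trading each large factorial for exponential decay in the lattice distances attached to that label, and then to absorb the remaining bounded contributions into $C_\eta^{|\mathcal X|}$. To set up notation, recall from~\eqref{eq:expanding_D_pi_general_partition} that every index $\ell\in\{1,\dots,N\}$ carries the block $\pi_\ell$ of the fixed partition $\Pi$ together with a label $\lambda_\ell$ — the ordered pair $(x_\ell,y_\ell)\in\cE(\mathcal X)^2$ when $\eps_\ell=1$ and the single edge $z_\ell\in\cE(\mathcal X)$ when $\eps_\ell=0$ — and ${\sf n}(\lambda)$ counts the indices with $\lambda_\ell=\lambda$, so $\sum_\lambda {\sf n}(\lambda)=N$ and the left-hand side of the claim equals $\prod_\lambda ({\sf n}(\lambda)!)^2$. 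Writing $\mathrm{dist}_\ell:=\eps_\ell\,d(x_\ell,y_\ell;\pi_\ell)+(1-\eps_\ell)\,d(z_\ell,\mathcal B_1;\pi_\ell)$ for the quantity appearing in the exponent on the right-hand side, the goal is $\prod_\lambda({\sf n}(\lambda)!)^2\le C_\eta^{|\mathcal X|}\exp(\eta\sum_\ell\mathrm{dist}_\ell)$. We will also use that the number of distinct labels is at most $N\le|\Upsilon|\le|\Delta|\le C_d|\mathcal X|$: every face of $\Delta$ joins two $L$-blocks of $X(\Delta)\subset\mathcal X$, and each block is incident to at most $2d$ faces.

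The key ingredient is a ball-packing estimate forcing the distances attached to a repeated label to grow superlinearly in its multiplicity. Fix a label $\lambda$ with ${\sf n}(\lambda)=k$, realised through $k$ \emph{distinct} — hence pairwise disjoint, since $\Pi$ is a partition — blocks $\pi_{\ell_1},\dots,\pi_{\ell_k}$, which we may assume are ordered so that $\mathrm{dist}_{\ell_1}\le\cdots\le\mathrm{dist}_{\ell_k}$. By definition of $d(\cdot\,;\pi)$, a minimal path realising $\mathrm{dist}_{\ell_i}$ begins at the fixed vertex $w$ determined by the label (namely $w=x_{\ell_i}$, resp.\ $w=z_{\ell_i}$) and has total length $\mathrm{dist}_{\ell_i}$, so every face of $\pi_{\ell_i}$ has an edge within distance $\mathrm{dist}_{\ell_i}$ of $w$. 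Consequently $\bigcup_{i\le j}\pi_{\ell_i}$ contains at least $j$ distinct faces, each with an edge within distance $R_j:=\mathrm{dist}_{\ell_j}$ of $w$. Since a face with an edge within distance $R$ of a given vertex is incident to an $L$-block whose centre lies within $R+L$ of that vertex, there are at most $C_d(1+R/L)^d$ such faces; therefore $j\le C_d(1+R_j/L)^d$, which yields $\mathrm{dist}_{\ell_j}=R_j\ge c_d\,L\,j^{1/d}$ for all $j\ge j_0(d)$. Summing over $j$ then gives $\sum_{i=1}^k\mathrm{dist}_{\ell_i}\ge c_d\,L\sum_{j_0\le j\le k}j^{1/d}\ge c_d'\,L\,k^{1+1/d}$ once $k\ge k_1(d)$, and trivially $\ge0$ for smaller $k$.

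To conclude, fix $\eta>0$ and choose $k_2=k_2(\eta,d)\ge k_1(d)$ so large that $2\ln k\le\eta\,c_d'\,k^{1/d}$ for all $k\ge k_2$; this is possible because $k^{1/d}/\ln k\to\infty$. For a label with ${\sf n}(\lambda)=k\ge k_2$, Stirling's bound, $L\ge1$ and the previous paragraph give
\[
({\sf n}(\lambda)!)^2\le k^{2k}=e^{2k\ln k}\le e^{\eta c_d' k^{1+1/d}}\le\exp\Big(\eta\!\!\sum_{i:\lambda_{\ell_i}=\lambda}\!\!\mathrm{dist}_{\ell_i}\Big),
\]
whereas for ${\sf n}(\lambda)<k_2$ we simply use $({\sf n}(\lambda)!)^2\le (k_2!)^2$. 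Multiplying over all labels and recalling that there are at most $C_d|\mathcal X|$ of them,
\[
\prod_\lambda ({\sf n}(\lambda)!)^2\le (k_2!)^{2C_d|\mathcal X|}\exp\Big(\eta\sum_{\ell=1}^N\mathrm{dist}_\ell\Big),
\]
which is the asserted inequality with $C_\eta=(k_2(\eta,d)!)^{2C_d}$. The one genuinely delicate step is the ball-packing estimate: one must verify that the distances grow at a \emph{superlinear} rate in the multiplicity, since mere linear growth — all that follows from, say, connectedness of a single block $\pi_\ell$ — would be overwhelmed by the $k^{2k}$ coming from the factorials, while the rate $k^{1+1/d}$ dictated by the $d$-dimensional geometry of $L\bZ^d$ comfortably suffices. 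Everything else is routine bookkeeping, and the same argument applies verbatim to the analogous bound for functions on faces.
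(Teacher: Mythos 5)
Your proof is correct and follows essentially the same route as the paper's: the disjointness of the partition blocks $\pi_\ell$ gives a $d$-dimensional packing bound forcing $\sum_{\ell:\lambda_\ell=\lambda}\mathrm{dist}_\ell\gtrsim {\sf n}(\lambda)^{1+1/d}$, after which Stirling's bound and the count of at most $O(|\mathcal X|)$ labels absorb everything else into $C_\eta^{|\mathcal X|}$. The only differences are cosmetic — you order the distances and sum $j^{1/d}$ (keeping the harmless factor $L$) where the paper argues that at least half the blocks attached to a label have distance $\ge c\,{\sf n}^{1/d}$, and you treat the $\eps_\ell=0$ and $\eps_\ell=1$ labels uniformly rather than reducing to one case.
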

        \noindent
        By combining the bound~\eqref{eq:expanding_D_pi_general_partition_after_estimate_on_covariance} with Claim~\ref{claim:bound_of_expectation_of_derivative_mathcal_D} and Claim~\ref{claim:combinatorial_estimate_on_sum_of_factorials} (applied with $\eta = c_3m/2$), we arrive at 
        \begin{align} \label{eq:bound_on_expectation_of_D_pi_derivatives_after_claims} \nonumber
            \bigg| \, & \bE_{\mathcal{X},\eta,\mathbf{s}_{\Delta}}^{\sf G} \Big[ \big( \prod_{\pi\in \Pi} D^\pi\big)  e^{-\beta V_{\La,\mathcal{X}} (\boldsymbol{\theta}; \mathbf{s}_{\Delta})}  \, \chi_{\mathcal{G}} (\boldsymbol{\theta})   \Big] \bigg| \\ \nonumber & \le C^{|\mathcal{X}|} \,  \sum_{\mathbf{x},\mathbf{y},\mathbf{z} \in \cE(\mathcal{X})^N} \sum_{\boldsymbol{\eps} \in \{0,1\}^N} (\log(\beta))^{-N} \cdot \Big(\prod_{\ell=1}^{N} (c_2 L^d)^{|\pi_\ell|}\Big) \, e^{- c m \sum_{\ell=1}^{N} \big[\eps_\ell d(x_\ell,y_\ell;\pi_\ell) + (1-\eps_\ell) d(z_\ell, \mathcal{B}_1 ; \pi_\ell)\big] }  \\ &\le  C^{|\mathcal{X}|} \frac{1}{\big(\log (\beta) \big)^{|\Pi|}} \prod_{\pi\in \Pi} \Big(  \sum_{x,y\in \cE(\mathcal{X})} (c_2L^d)^{|\pi|} \, e^{- c m d(x,y;\pi)} + \sum_{z\in \cE(\mathcal{X})} (c_2L^d)^{|\pi|} \, e^{- c m d(z, \mathcal{B}_1 ; \pi)\big] } \, \Big) \, .
        \end{align}
        Using the exponential decay, we can estimate the sums in the product as 
        \[
        \max\bigg\{\sum_{x,y\in \cE(\mathcal{X})} (c_2L^d)^{|\pi|} \, e^{- c m d(x,y;\pi)} , \sum_{z\in \cE(\mathcal{X})} (c_2L^d)^{|\pi|} \, e^{- c m d(z, \mathcal{B}_1 ; \pi)\big] } \bigg\} \lesssim  (L^d)^{|\pi|} e^{-cd(\pi)} \, . 
        \]
        Plugging this estimate into~\eqref{eq:bound_on_expectation_of_D_pi_derivatives_after_claims}, we finally arrive at
        \[
        \bigg| \,  \bE_{\mathcal{X},\eta,\mathbf{s}_{\Delta}}^{\sf G} \Big[ \big( \prod_{\pi\in \Pi} D^\pi\big) e^{-\beta V_{\La,\mathcal{X}} (\boldsymbol{\theta}; \mathbf{s}_{\Delta})}  \, \chi_{\mathcal{G}} (\boldsymbol{\theta})   \Big] \bigg| \le C^{|\mathcal{X}|} \prod_{\pi \in \Pi} \Big( \frac{1}{\log(\beta)} (L^d)^{|\pi|} e^{-cd(\pi)}\Big) \, ,
        \]
        which yields the claim. 
    \end{proof}
    \begin{proof}[Proof of Claim~\ref{claim:bound_of_expectation_of_derivative_mathcal_D}]
        In this proof, we will collectively treat all partial derivatives appearing in $\mathcal{D}(\mathbf{x}, \mathbf{y}, \mathbf{z} , \boldsymbol{\eps})$ as indexed by $\mathbf{w} = (w_1,\ldots,w_{\widetilde N})$, as it will not be important to distinguish between the different roles on $(\mathbf{x},\mathbf{y})$ and $\mathbf{z}$. In turn, $\widetilde{N}$ will be the number of derivatives we have to take, and hence
        \[
        N \le \widetilde N\le 2N \, .
        \]
        By Leibnitz rule, we can write $\mathbf{w} = \{ \mathbf{w}^1,\mathbf{w}^2\}$ as a symbolic partition and note that
        \begin{equation} \label{eq:proof_of_claim:bound_of_expectation_of_derivative_mathcal_D_after_leibnitz}
            \bE_{\mathcal{X},\eta,\mathbf{s}_{\Delta}}^{\sf G} \big|\mathcal{D}(\mathbf{x},\mathbf{y},\mathbf{z}, \boldsymbol{\eps} ) \big(e^{-\beta V_{\La,\mathcal{X}}} \, \chi_{\mathcal{G}} \big)\big| \le 2^{\widetilde{N}} \max_{\mathbf{w} = \{ \mathbf{w}^1,\mathbf{w}^2\}} \sup_{\boldsymbol{\theta}} \Big|\mathcal{D} (\mathbf{w}^1) \big(e^{-\beta V_{\La,\mathcal{X}}} \big) \Big| \cdot \sup_{\boldsymbol{\theta}} \Big|\mathcal{D} (\mathbf{w}^2) \big(\chi_{\mathcal{G}} \big) \Big| \, , 
        \end{equation}
        where the maximum is over all possible partitions the index set of $\mathbf{w}$ into two disjoint sets. Let $\mathbf{w} = \{ \mathbf{w}^1,\mathbf{w}^2\}$ be any such partition, and denote by $N_1$ and $N_2$ the number of derivatives attached to $\mathcal{D}(\mathbf{w}^1)$ and $\mathcal{D}(\mathbf{w}^2)$, respectively. Since the function $\mathbf{\theta} \mapsto e^{-\beta V_{\La,\mathcal{X}}}$ is holomorphic, the Cauchy estimate~\cite[Theorem~2.2.7]{Hormander-SeveralComplexVariables} in the polydisk of radius $\log(\beta)/\sqrt{\beta}$ yields
        \begin{equation*}
            \sup_{\boldsymbol{\theta}} \Big|\mathcal{D} (\mathbf{w}^1) \big(e^{-\beta V_{\La,\mathcal{X}}} \big) \Big| \le \Big(\frac{\sqrt{\beta}}{\log \beta} \Big)^{N_1} \cdot \Big( \prod_{w\in \mathbf{w}^1} {\sf n}(w)! \Big) \cdot  \sup_{\boldsymbol{\theta}: |\theta_e| \le \frac{\log \beta}{\sqrt{\beta}}}e^{-\beta V_{\La,\mathcal{X}} (\boldsymbol{\theta})}  \, .
        \end{equation*}
        Since for all $\beta$ large enough we have
        \[
        e^{ - \beta g(\theta)  } \le e^{ C \beta \frac{\log^4(\beta)}{\beta^2}} \le 2
        \]
        in the relevant range, we get that 
        \begin{equation} \label{eq:bound_on_derivatives_from_w_1}
            \sup_{\boldsymbol{\theta}} \Big|\mathcal{D} (\mathbf{w}^1) \big(e^{-\beta V_{\La,\mathcal{X}}} \big) \Big| \le C^{|\mathcal{X}|} \,  \Big(\frac{\sqrt{\beta}}{\log \beta} \Big)^{N_1} \cdot \Big( \prod_{w\in \mathbf{w}^1} {\sf n}(w)! \Big) \, . 
        \end{equation}
        We remark that the bound~\eqref{eq:bound_on_derivatives_from_w_1} can be improved asymptotically in $\beta$, as the first derivative of $e^{-\beta V_{\La,\mathcal{X}}}$ (in all directions) also decays in $\beta$, but this will not be important in what follows so we use the simple bound obtained by Cauchy's estimates instead. By recalling the definition of $\chi_\mathcal{G}$ (see, in particular, item (3) in properties of the smooth bump function $\chi$ from Section~\ref{subsection:partition_of_unity}) we get that  
        \[
        \sup_{\boldsymbol{\theta}} \Big|\mathcal{D} (\mathbf{w}^2) \big(\chi_{\mathcal{G}} \big) \Big| \le C^{|\mathcal{X}|} \,  T_\beta^{-N_2} \, \prod_{w\in \mathbf{w}^2} ({\sf n}(w)!)^2 =  C^{|\mathcal{X}|} \,  \Big(\frac{\sqrt{\beta}}{\log^{2d+2}(\beta)}\Big)^{N_2} \, \prod_{w\in \mathbf{w}^2} ({\sf n}(w)!)^2  \, . 
        \]
        Plugging~\eqref{eq:bound_on_derivatives_from_w_1} and the above bound into~\eqref{eq:proof_of_claim:bound_of_expectation_of_derivative_mathcal_D_after_leibnitz} yields 
        \[
        \bE_{\mathcal{X},\eta,\mathbf{s}_{\Delta}}^{\sf G} \big|\mathcal{D}(\mathbf{x},\mathbf{y},\mathbf{z}, \boldsymbol{\eps} ) \big(e^{-\beta V_{\La,\mathcal{X}}} \, \chi_{\mathcal{G}} \big)\big| \le C^{|\mathcal{X}|} \Big(\frac{\beta}{\log^2(\beta)}\Big)^{N_1/2 + N_2/2} \prod_{(x,y)\in \mathcal{X}^2} \big({\sf n}(x,y)!\big)^2 \prod_{z\in \mathcal{X}} \big( {\sf n}(z)!\big)^2 \, ,
        \]
        and since $N_1 + N_2 = \widetilde N \le 2N$, the claim follows. 
    \end{proof}
    
    \begin{proof}[Proof of Claim~\ref{claim:combinatorial_estimate_on_sum_of_factorials}]
        The claim follows from the analogous claim in Glimm, Jaffe and Spencer's work (see~\cite[Lemma~10.2]{Glimm-Jaffe-Spencer}), but for the sake of readability we produce the proof here as well. We will only prove the relevant bound on the product of pairs $(x,y)\in \cE(\mathcal{X})^2$, as the same proof works also for the corresponding bound for the product over $z\in \cE(\mathcal{X})$. Without loss of generally, we assume that $\eps_\ell = 1$ for all $\ell=1,\ldots, N$ (otherwise, simply restrict to the indices $\eps_\ell = 1$, where the sam bound applies). 
        
        For any $\pi\in \Pi$ we denote by ${\sf p}(\pi)$ the pair $(x,y)\in \cE(\mathcal{X})^2$ which is matched to $\pi$ in the sum~\eqref{eq:expanding_D_pi_general_partition} (namely, the pair which appears in ${\sf C}(p(\pi);\pi)$ in the first product). By definition, we have 
        \begin{equation} \label{eq:def_of_n_x_y_in_terms_of_p}
        {\sf n}(x,y) = \# \big\{ \pi \in \Pi \, : \, {\sf p}(\pi) = (x,y) \big\} \, .    
        \end{equation}
        Furthermore, for any fixed pair $(x,y)\in \cE(\mathcal{X})^2$, we have the trivial bound
        \[
        \# \big\{ \pi \in \Pi \, : \, {\sf p}(\pi) = (x,y)  \quad \text{and} \quad d(x,y;\pi) \le r \big\} \lesssim r^d \, ,
        \]
        since the partition elements do not overlap. 
        Plugging $r= {\sf n}(x,y)^{1/d}$ yields that 
        \[
        {\sf n}(x,y)^{1/d} \le C \, d(x,y;\pi)
        \]
        for at least ${\sf n}(x,y)/2$ of the partition elements in the set~\eqref{eq:def_of_n_x_y_in_terms_of_p}. We thus have
        \begin{equation} \label{eq:bound_of_n_x_y_in_terms_of_d}
        {\sf n}(x,y)^{1+1/d} \le C \sum_{\pi \, : \,  {\sf p}(\pi) = (x,y)} d(x,y;\pi) \, ,    
        \end{equation}
        
        and, by the Stirling bound, we get
        \begin{align*}
            \prod_{(x,y)\in \cE(\mathcal{X})^2} \big({\sf n}(x,y)!\big)^2  & \le \exp\Big( C \sum_{(x,y) \in \cE(\mathcal{X})^2} {\sf n}(x,y) \log {\sf n}(x,y) \Big) \\ &\le C_\eta^{|\mathcal{X}|} \exp\Big( \eta \sum_{(x,y) \in \cE(\mathcal{X})^2} \big({\sf n}(x,y)\big)^{1+1/d} \, \mathbf{1}_{\{{\sf n}(x,y) \ge 1\}} \Big) \\ &\stackrel{\eqref{eq:bound_of_n_x_y_in_terms_of_d}}{\le}  C_\eta^{|\mathcal{X}|} \exp\Big( \eta \sum_{\ell=1}^{N}  d(x_\ell,y_\ell;\pi_\ell) \Big) \, .
        \end{align*}
        We remark that the reason we are getting the term $C_\eta^{|\mathcal{X}|}$ is the second inequality (as opposed to $C_\eta^{|\mathcal{X}|^2}$) is that the number of non-zero summands in the sum over $\cE(\mathcal{X})^2$ is at most $|\Pi| \le |\Delta| \le |\mathcal{X}|$. With that, the proof of the claim is complete and we are done. 
    \end{proof}

    \subsubsection*{Acknowledgment}
    This work was partially supported by NSF grants DMS-2413864 and DMS-2401136.
    
    \medskip
    \medskip

    \bibliographystyle{abbrv}
    \bibliography{YangMills}

\end{document}